\newtheorem{defn}{Definition}[section]
\newtheorem{lemma}[defn]{Lemma}
\newtheorem{prop}[defn]{Proposition}
\newtheorem{theo}[defn]{Theorem}
\newtheorem{rk}[defn]{Remark}
\def\Ric{\mathop{\rm Ric}\nolimits}
\def\Rm{\mathop{\rm Rm}\nolimits}
\def\tr{\mathop{\rm tr}\nolimits}
\def\vol{\mathop{\rm vol}\nolimits}
\def\eucl{\mathop{\rm eucl}\nolimits}
\def\dim{\mathop{\rm dim}\nolimits}
\def\vol{\mathop{\rm Vol}\nolimits}
\def\div{\mathop{\rm div}\nolimits}
\def\AVR{\mathop{\rm AVR}\nolimits}
\def\Li{\mathop{\rm \mathscr{L}}\nolimits}
\def\Ric{\mathop{\rm Ric}\nolimits}
\def\Rm{\mathop{\rm Rm}\nolimits}
\def\tr{\mathop{\rm tr}\nolimits}
\def\vol{\mathop{\rm vol}\nolimits}
\def\eucl{\mathop{\rm eucl}\nolimits}
\def\dim{\mathop{\rm dim}\nolimits}
\def\vol{\mathop{\rm Vol}\nolimits}
\def\div{\mathop{\rm div}\nolimits}
\def\AVR{\mathop{\rm AVR}\nolimits}
\def\Li{\mathop{\rm \mathscr{L}}\nolimits}
\def\supp{\mathop{\rm supp}\nolimits}
\def\R{\mathop{\rm R}\nolimits}
\def\RP{\mathop{\rm \mathcal{R}}\nolimits}
\def\SO{\mathop{\rm SO}\nolimits}
\def\ind{\mathop{\rm ind}\nolimits}
\newsavebox\CBox
\newcommand\hcancel[2][0.5pt]{%
  \ifmmode\sbox\CBox{$#2$}\else\sbox\CBox{#2}\fi%
  \makebox[0pt][l]{\usebox\CBox}%
  \rule[0.5\ht\CBox-#1/2]{\wd\CBox}{#1}}
\title{Stability of ALE Ricci-flat manifolds under Ricci flow}
\author[Alix Deruelle and Klaus Kr\"{o}ncke]{Alix Deruelle and Klaus Kr\"{o}ncke}
\address[Alix Deruelle]{Institut de math\'ematiques de Jussieu, 4, place Jussieu, Boite Courrier 247 - 75252 Paris}
\email{alix.deruelle@imj-prg.fr}
\address[Klaus Kr\"{o}ncke]{Fachbereich Mathematik, Universit\"{a}t Hamburg, Bundesstra{\ss}e 55, D-20146, Hamburg }
\email{klaus.kroencke@uni-hamburg.de}
\begin{document}
\begin{abstract}We prove that if an ALE Ricci-flat manifold $(M,g)$ is linearly stable and integrable, it is dynamically stable under Ricci flow, i.e. any Ricci flow starting close to $g$ exists for all time and converges modulo diffeomorphism to an ALE Ricci-flat metric close to $g$. By adapting Tian's approach in the closed case, we show that integrability holds for ALE Calabi-Yau manifolds which implies that they are dynamically stable.
\end{abstract}
\maketitle
\section{Introduction}
Consider a complete Riemannian manifold $(M^n,g)$ without boundary endowed with a Ricci-flat metric $g$. As such, it is a fixed point of the Ricci flow and therefore, it is a natural problem to study the stability of such a metric with respect to the Ricci flow. Whether the manifold is compact or noncompact makes an essential difference in the analysis. In both cases, if $(M^n,g)$ is Ricci-flat, the linearized operator is the so called Lichnerowicz operator 
 acting on symmetric $2$-tensors. Nonetheless, the $L^2$ approach differs drastically in the noncompact case. Indeed, even in the simplest situation, that is the flat case, the spectrum of the Lichnerowicz operator is not discrete anymore and $0$ belongs to the essential spectrum. In this paper, we consider Ricci-flat metrics on noncompact manifolds that are asymptotically locally Euclidean (ALE for short), i.e. that are asymptotic to a flat cone over a space form $\mathbb{S}^{n-1}/\Gamma$ where $\Gamma$ is a finite group of $\SO(n)$ acting freely on $\mathbb{R}^n\setminus\{0\}$.

If $(M^n,g_0)$ is an ALE Ricci-flat metric, we assume furthermore that it is linearly stable, i.e. the Lichnerowicz operator is nonpositive in the $L^2$ sense and that the set of ALE Ricci-flat  metrics close to $g_0$ is integrable, i.e. has a manifold structure of finite dimension: see Section \ref{def-ALE}.

The strategy we adopt is given by Koch and Lamm \cite{Koc-Lam-Rou} that studied the stability of the Euclidean space along the Ricci flow in the $L^{\infty}$ sense. The quasi-linear evolution equation to consider here is 
\begin{eqnarray}
\partial_tg=-2\Ric_g+\Li_{V(g,g_0)}(g),\quad\text{$\|g(0)-g_0\|_{L^{\infty}(g_0)}$ small} ,\label{eq-ricci-flat}
\end{eqnarray}
where $(M^n,g_0)$ is a fixed background ALE Ricci-flat metric and $\Li_{V(g,g_0)}(g)$ is the so called DeTurck's term. Equation (\ref{eq-ricci-flat}) is called the Ricci-DeTurck flow: its advantage over the Ricci flow equation is to be a strictly parabolic equation instead of a degenerate one. Koch and Lamm managed to rewrite (\ref{eq-ricci-flat}) in a clever way to get optimal results regarding the regularity of the initial condition: see Section \ref{RF-Sec}.

 Our main theorem is then:
\begin{theo}\label{main-theo-bis}
	Let $(M^n,g_0)$ be an ALE Ricci-flat space. Assume it is linearly stable and integrable. Then for every $\epsilon>0$, there exists a $\delta>0$ such that the following holds: for any metric $g\in \mathcal{B}_{L^2\cap L^{\infty}}(g_0,\delta)$, there is a complete Ricci-DeTurck flow $(M^n,g(t))_{t\geq 0}$ starting from $g$ converging to an ALE Ricci-flat metric $g_{\infty}\in \mathcal{B}_{L^2\cap L^{\infty}}(g_0,\epsilon)$. 
	
	Moreover, the $L^{\infty}$ norm of $(g(t)-g_0)_{t\geq 0}$ is decaying sharply at infinity:
	\begin{eqnarray*}
\|g(t)-g_0\|_{L^{\infty}(M\setminus B_{g_0}(x_0,\sqrt{t}))}\leq C(n,g_0,\epsilon)\frac{\sup_{t\geq 0}\|g(t)-g_0\|_{L^2(M)}}{t^{\frac{n}{4}}},\quad t>0.
\end{eqnarray*} 
\end{theo}
   As far as we know, this theorem is the first stability result for non-flat and noncompact Ricci-flat manifolds under Ricci flow.\\
   
 Schn\" urer, Schulze and Simon \cite{Sch-Sch-Sim} have proved the stability of the Euclidean space for an $L^2\cap L^{\infty}$ perturbation as well. The decay obtained in Theorem \ref{main-theo-bis} sharpens their result. Indeed, the proof shows that if $(M^n,g_0)$ is isometric to $(\mathbb{R}^n,\eucl)$ then the $L^{\infty}$ decay holds on the whole manifold. This $L^{\infty}$ decay on Euclidean space was also recently shown in \cite{Appleton} by a different approach.

\begin{rk}
It is an open question whether the decay in time obtained in Theorem \ref{main-theo-bis}  holds on the whole manifold with an exponent $\alpha$ less than or equal to $n/4$.
\end{rk}

One of the main difficulties in proving Theorem \ref{main-theo-bis} is to establish a uniform-in-time $L^2$ bound on the difference of the metrics $g(t)-g_0(t)$ for non-negative time $t$ and a suitable family of Ricci-flat reference metrics $g_0(t)$ . To prove such a bound in case the background metric $g_0$ is flat, \cite{Sch-Sch-Sim} and \cite{Appleton} use a direct integration by parts. In our setting, this approach does not work for mainly two reasons: the kernel of the Lichnerowicz operator might be non-trivial and the curvature terms from the linearized operator cannot be treated as error terms. Therefore, the strategy is to work orthogonally to the kernel of the Lichnerowicz operator in order to apply a delicate notion of strict positivity for the Lichnerowicz operator that enables us to absorb the curvature terms. This leads us in turn to a very delicate choice of a reference metric $g_0(t)$ which makes the analysis trickier: see Section \ref{deco-met-sec}.  \\

 Now, from the physicist point of view, the question of stability of ALE Ricci-flat  metrics is of great importance when applied to hyperk\"{a}hler or Calabi-Yau ALE metrics: Hyperk\"{a}hler ALE metrics (also called gravitational instantons) are of great importance in theoretical physics: see for instance \cite{Ber-Cam-Fer-Fre}, \cite{Ans-Dam-Bil-Mar-Fre} and the references therein. In this context, the Ricci flow serves as a first-order approximation of the renormalization group flow and admits deep connections with it, see \cite{Car-Ren-Grp}.
 
  In the hyperkähler or Calabi-Yau case, the Lichnerowicz operator is always a nonnegative operator because of the special algebraic structure of the curvature tensor shared by these metrics. It turns out that they are also integrable: see Theorem \ref{CY-ALE-Int} based on the fundamental results of Tian \cite{Tian-Smoothness} in the closed case. In particular, it gives us plenty of examples to which one can apply Theorem \ref{main-theo-bis}. 
 
 Another source of motivation comes from the question of continuing the Ricci flow after it reached a finite time singularity on a $4$-dimensional closed Riemannian manifold: the works of Bamler and Zhang \cite{Bam-Zha-I} and Simon \cite{Sim-Ext} show that the singularities that can eventually show up for Ricci flows with bounded scalar curvature are orbifold singularities and thus modelled over ALE Ricci-flat metrics. A strong connection between the appereance of such singularities and the stability of their blow-up limits is expected. However, there is no classification available of such metrics in dimension $4$ at the moment, except  Kronheimer's classification for hyperk\"ahler metrics \cite{Kron-Class}. \\

Finally, we would like to discuss some related results especially regarding the stability of closed Ricci-flat metrics. There have been basically two approaches. On one hand, Sesum \cite{Ses-Lin-Dyn-Sta} has proved the stability of integrable Ricci-flat metrics on closed manifolds: in this case, the convergence rate is exponential since the spectrum of the Lichnerowicz operator is discrete. On the other hand, Haslhofer-M\"uller \cite{Has-Mul-Lam} and the second author \cite{Kro-Sta-Ric-Sol} have proved Lojasiewicz inequalities for Perelman's entropies which are monotone under the Ricci flow and whose critical points are exactly Ricci-flat metrics and Ricci solitons, respectively. The analysis in the proof of Theorem \ref{main-theo-bis} differs substantially from these two previous approaches as these tools and features are not available in our setting.\\

The paper is organized as follows. Section \ref{def-ALE} recalls the basic definitions of ALE spaces together with the notions of linear stability and integrability of a Ricci-flat metric. Section \ref{sec-moduli-space} gives a detailed description of the space of gauged ALE Ricci-flat metrics: see Theorem \ref{ell-reg-prop} and Theorem \ref{analyticset}. Section \ref{Ricci-Flat-ALE-Kahler} investigates the integrability of K\"ahler Ricci-flat metrics: this is the content of Theorem \ref{CY-ALE-Int}. Section \ref{RF-Sec} is devoted to the proof of the first part of Theorem \ref{main-theo-bis}. Section \ref{sec-equ-flo} discusses the structure of the Ricci-DeTurck flow. Section \ref{Short-time} establishes pointwise and integral short time estimates. The core of the proof of Theorem \ref{main-theo-bis} is contained in Section \ref{deco-met-sec}: a priori uniform in time $L^2$ estimates are proved with the help of a suitable notion of strict positivity for the Lichnerowicz operator developed for Schr\"odinger operators by Devyver \cite{Dev-Gau-Est}. The infinite time existence and the convergence aspects of Theorem \ref{main-theo-bis} are then proved in Section \ref{Exi-conv}. 
Finally, Section \ref{Nash-Moser-Sec} proves the last part of Theorem \ref{main-theo-bis}: the decay in time is verified with the help of a Nash-Moser iteration.
\subsection*{Acknowledgements} This work was carried out while the authors were supported by the National Science Foundation under Grant No.~DMS-1440140 while in residence at the Mathematical Sciences Research Institute (MSRI) in Berkeley, California, during the Spring 2016 semester. They wish to thank MSRI for their excellent working conditions and hospitality during this time.

\section{ALE spaces}\label{Sec-ALE}
\subsection{Analysis on ALE spaces}\label{def-ALE}
We start by recalling a couple of definitions. 

\begin{defn}
	A complete Riemannian manifold $(M^n,g_0)$ is said to be \textbf{asymptotically locally Euclidean} (ALE) with one end of order $\tau>0$ if there exists a compact set $K\subset M$, a radius $R$ and a diffeomorphism such that : $\phi:M\setminus K\rightarrow (\mathbb{R}^n\setminus B_R)/\Gamma$, where $\Gamma$ is a finite group of $\SO(n)$ acting freely on $\mathbb{R}^n\setminus\{0\}$, such that 
	\begin{eqnarray*}
		\arrowvert \nabla^{\eucl,k}(\phi_*g_0-g_{\eucl})\arrowvert_{\eucl}=\textit{O}(r^{-\tau-k}),\quad \forall k\geq 0.
	\end{eqnarray*}
	holds on $(\mathbb{R}^n\setminus B_R)/\Gamma$.
\end{defn}

The linearized operator we will focus on is the so called Lichnerowicz operator whose definition is recalled below:
\begin{defn}
	Let $(M,g)$ be a Riemannian manifold. Then the operator $L_g:C^{\infty}(S^2T^*M)\rightarrow C^{\infty}(S^2T^*M)$, defined by
	\begin{eqnarray*}
		L_g(h)&:=&\Delta_gh+2\Rm(g)\ast h-\Ric(g)\circ h-h\circ \Ric(g),\\
		(\Rm(g)\ast h)_{ij}&:=&\Rm(g)_{iklj}h_{mn}g^{km}g^{ln},\\
		(h\circ k)_{ij}&:=&h_{ik}k_{lj}g^{kl},\quad k\in S^2T^*M,
	\end{eqnarray*}
	is called the \textbf{Lichnerowicz} Laplacian acting on the space of symmetric $2$-tensors $S^2T^*M$.
	
\end{defn}
\noindent In this paper, we consider the following notion of stability:
\begin{defn}
	Let $(M^n,g_0)$ be a complete ALE Ricci-flat manifold. $(M^n,g_0)$ is said to be \textbf{linearly stable} if the (essential) $L^2$ spectrum of the Lichnerowicz operator $L_{g_0}:=\Delta_{g_0}+2\Rm(g_0)\ast$ is in $(-\infty,0]$. 
\end{defn}
Equivalently, this amounts to say that $\sigma_{L^2}(-L_{g_0})\subset [0,+\infty)$.
By a theorem due to G. Carron \cite{Car-Coh}, $\ker_{L^2}L_{g_0}$ has finite dimension. Denote by $\Pi_c$ the $L^2$ projection on the kernel $\ker_{L^2}L_{g_0}$ and $\Pi_s$ the projection orthogonal to $\Pi_c$ so that $h=\Pi_ch+\Pi_sh$ for any $h\in L^2(S^2T^*M)$.

Let $(M,g_0)$ be an ALE Ricci-flat manifold and $\mathcal{U}_{g_0}$ the set of ALE Ricci-flat metrics with respect to the gauge $g_0$, that is : 
\begin{eqnarray}
\mathcal{U}_{g_0}&:=&\left\{\mbox{$g \mid$ $g$ ALE metric on $M$ s.t.}\quad \Ric(g)=0\mbox{ and }\Li_{V(g,g_0)}(g)=0\right\},\\
g_0(V(g,g_0),.)&:=&\div_{g}g_0-\frac{1}{2}\nabla^{g}\tr_{g}g_0,\label{def-vect-gauge}
\end{eqnarray}
endowed with the $L^2\cap L^{\infty}$ topology coming from $g_0$.

\begin{defn}\label{defn-integrable}
	$(M^n,g_0)$ is said to be \textbf{integrable} if $\mathcal{U}_{g_0}$ has a smooth structure in a neighborhood of $g_0$. In other words, $(M^n,g_0)$ is integrable if the map 
	\begin{eqnarray*}
		\Psi_{g_0}: g\in \mathcal{U}_{g_0}\rightarrow \Pi_c(g-g_0)\in \ker_{L^2}(L_{g_0}),
	\end{eqnarray*}
	is a local diffeomorphism at $g_0$. 
\end{defn}
If $(M,g_0)$ is ALE and Ricci-flat, it is a consequence of \cite[Theorem 1.1]{Ban-Kas-Nak} that it is already ALE of order $n-1$. Moreover, if $n=4$ or $(M,g_0)$ is K\"ahler, it is ALE of order $n$. This is due to the presence of Kato inequalities, \cite[Corollary 4.10]{Ban-Kas-Nak} for the curvature tensor.
We will show in Theorem \ref {ell-reg-prop} that by elliptic regularity, all $g\in\mathcal{U}_{g_0}$ are ALE of order $n-1$ with respect to the same coordinates as $g_0$. 

In order to do analysis of partial differential equations on ALE manifolds, one has to work with weighted function spaces which we will define in the following. Fix a point $x\in M$ and define a function $\rho:M\to\mathbb{R}$ by  $\rho(y)=\sqrt{1+d(x,y)^2}$. For $p\in[1,\infty)$ and $\delta\in \mathbb{R}$, we define the spaces  $L^p_{\delta}(M)$ as the closure of $C^{\infty}_{0}(M)$ with respect to the norm
\begin{align*}
\left\|u\right\|_{L^p_{\delta}}=\left(\int_M |\rho^{-\delta}u|^p\rho^{-n}d\mu\right)^{1/p},
\end{align*}
and the weighted Sobolev spaces $W^{k,p}_{\delta}(M)$ as the closure of $C^{\infty}_{0}(M)$ under
\begin{align*}
\left\|u\right\|_{W^{k,p}_{\delta}}=\sum_{l=0}^k\left\|\nabla^lu\right\|_{L^{p}_{\delta-l}}.
\end{align*}
The weighted H\"{o}lder spaces are defined as the set of maps $u\in C^{k,\alpha}_{loc}(M)$, $\alpha\in(0,1)$ such that the following quantity, 
\begin{align*}
\left\|u\right\|_{C^{k,\alpha}_{\delta}}=&\sum_{l=0}^k\sup_{x\in M} \rho^{-\delta+l}(x)|\nabla^lu(x)|\\&\quad+\sup_{\substack{x, y\in M\\ 0<d(x,y)<\mathrm{inj}(M)}}\min\left\{\rho^{-\delta+k+\alpha}(x),\rho^{-\delta+k+\alpha}(y)\right\}\frac{|\tau^y_x\nabla^ku(x)-\nabla^k u(y)|}{|x-y|^{\alpha}},
\end{align*}
is finite. Here $\tau_x^y$ denotes the parallel transport from $x$ to $y$ along the shortest geodesic joining $x$ and $y$. All these spaces are Banach spaces, the spaces $H^k_{\delta}(M):=W^{k,2}_{\delta}(M)$ are Hilbert spaces and their definition does not depend on the choice of the base point defining the weight function $\rho$. All these definitions extend to Riemannian vector bundles with a metric connection in an obvious manner.

In the literature, there are different notational conventions for weighted spaces. We follow the convention of \cite{Bar-Mass}.
The Laplacian $\Delta_g$ is a bounded map
$\Delta_g:W^{p,k}_{\delta}(M)\to W^{p,k-2}_{\delta-2}(M)$ and there exists a discrete set $D\subset\mathbb{R}$ such that this operator is Fredholm for $\delta\in \mathbb{R}\setminus D$. This is shown in \cite{Bar-Mass} in the asymptotically flat case and the proof in the ALE case is the same up to minor changes. We call the values $\delta\in D$ exceptional and the values $\delta\in \mathbb{R}\setminus D$ nonexceptional. If $\delta\in (2-n,0)$, the operator is even an isomorphism  \cite[p.\ 151]{Pac-T}.
The Fredholm properties also hold for elliptic operators of arbitrary order acting on vector bundles supposed that the coefficients behave suitable at infinity \cite[Theorem 6.1]{Loc-McO}. The isomorphism properties also hold with the same range of $\delta$ for connection Laplacians on arbitrary tensor bundles.
%
We will use these facts frequently in this paper.

%
%
%
\subsection{The space of gauged ALE Ricci-flat metrics}\label{sec-moduli-space}
Fix an ALE Ricci-flat  manifold $(M,g_0)$. Let $\mathcal{M}$ be the space of smooth metrics on the manifold $M$. For $g\in \mathcal{M}$, let $V=V(g,g_0)$ be the vector field defined intrinsically by (\ref{def-vect-gauge}) and locally given by $g^{ij}(\Gamma(g)^k_{ij}-\Gamma(g_0)_{ij}^k)$ where $\Gamma(g)_{ij}^k$ denotes the Christoffel symbols associated to the Riemannian metric $g$. We call a metric $g$ gauged, if $V(g,g_0)=0$.
Let
\begin{align}
\mathcal{F}=\left\{g\in\mathcal{M}\mid -2\Ric(g)+\Li_{V(g,g_0)}g=0 \right\}\label{def-F-set},
\end{align}
be the set of stationary points of the Ricci-DeTurck flow. In local coordinates, the above equation (\ref{def-F-set}) can also be written as
\begin{eqnarray*}
	0&=&g^{ab}\nabla^{g_0,2}_{ab}g_{ij}-g^{kl}g_{ip}(g_0)^{pq}\Rm(g_0)_{jklq}-g^{kl}g_{jp}(g_0)^{pq}\Rm(g_0)_{iklq}\\
	&&+g^{ab}g^{pq}\left(\frac{1}{2}\nabla^{g_0}_ig_{pa}\nabla^{g_0}_jg_{qb}+\nabla^{g_0}_ag_{jp}\nabla^{g_0}_qg_{ib}\right)\\
	&&-g^{ab}g^{pq}\left(\nabla^{g_0}_ag_{jp}\nabla^{g_0}_bg_{iq}-\nabla^{g_0}_jg_{pa}\nabla^{g_0}_bg_{iq}-\nabla^{g_0}_ig_{pa}\nabla^{g_0}_bg_{jq}\right),
\end{eqnarray*}
see \cite[Lemma 2.1]{Shi-Def}.
By defining $h=g-g_0$, this equation can  be again rewritten as
\begin{equation}\begin{split}\label{stat-evo-equ}
0&=g^{ab}\nabla^{g_0,2}_{ab}h_{ij}+h_{ab}g^{ka}(g_0)^{lb}g_{ip}(g_0)^{pq}\Rm(g_0)_{jklq}+h_{ab}g^{ka}(g_0)^{lb}g_{jp}(g_0)^{pq}\Rm(g_0)_{iklq}\\
&\quad+g^{ab}g^{pq}\left(\frac{1}{2}\nabla^{g_0}_ih_{pa}\nabla^{g_0}_jh_{qb}+\nabla^{g_0}_ah_{jp}\nabla^{g_0}_qh_{ib}\right)\\&\quad
-g^{ab}g^{pq}\left(\nabla^{g_0}_ah_{jp}\nabla^{g_0}_bh_{iq}-\nabla^{g_0}_jh_{pa}\nabla^{g_0}_bh_{iq}-\nabla^{g_0}_ih_{pa}\nabla^{g_0}_bh_{jq}\right),
\end{split}
\end{equation}
where we used that $g_0$ is Ricci-flat.
The linearization of this equation at $g_0$ is given by
\begin{align*}
\frac{d}{dt}|_{t=0}(-2\mathrm{Ric}_{g_0+th}+\Li_{V(g_0+th,g_0)}(g_0+th))=L_{g_0}h.
\end{align*}
A proof of this fact can be found for instance in \cite[Chapter 3]{Bam-Phd}.

We recall the well-known fact that the $L^2$-kernel of the Lichnerowicz operator consists of transverse traceless tensors:

\begin{lemma}\label{tttensors}
	Let $(M^n,g)$ be an ALE Ricci-flat manifold and $h\in\ker_{L^2}(L_{g_0})$. Then, $\tr_{g_0} h=0$ and $\div_{g_0} h=0$.
\end{lemma}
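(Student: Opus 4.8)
The plan is to exploit two well-known commutation identities for the Lichnerowicz Laplacian — both special consequences of $\Ric(g_0)=0$ — in order to reduce the vanishing of $\tr_{g_0}h$ and $\div_{g_0}h$ to Liouville-type statements for $L^2$ solutions of $\nabla^*\nabla u=0$ on the complete, infinite-volume manifold $(M^n,g_0)$. Elliptic regularity ensures that $h$ is smooth, and the ALE hypothesis provides $\sup_M|\Rm(g_0)|<\infty$; this boundedness is exactly what legitimizes the integrations by parts at infinity. The main obstacle is indeed this analysis at infinity: unlike the closed case, harmonicity alone does not force vanishing, so every integration by parts must be controlled by Caccioppoli cutoffs, and this is where the a priori bound $\nabla h\in L^2$ enters.

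First I would record that a priori estimate. Testing $L_{g_0}h=0$ against $\eta_R^2 h$, with $\eta_R$ a cutoff equal to $1$ on $B_R$, supported in $B_{2R}$, and satisfying $|\nabla\eta_R|\le C/R$, and integrating by parts the term $\langle\Delta_{g_0}h,h\rangle$, one gets (the Ricci terms being absent) $\int\eta_R^2|\nabla h|^2\le 4\int|\nabla\eta_R|^2|h|^2+4\int\eta_R^2|\Rm(g_0)||h|^2$. Letting $R\to\infty$ and using $h\in L^2$ together with $\|\Rm(g_0)\|_{L^\infty}<\infty$ yields $\|\nabla h\|_{L^2}^2\le C\|\Rm(g_0)\|_{L^\infty}\|h\|_{L^2}^2<\infty$. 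In particular $\tr_{g_0}h\in L^2$ and $\div_{g_0}h\in L^2$.

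For the trace, a direct computation using $\nabla g_0=0$ and $\Ric(g_0)=0$ gives $\tr_{g_0}(L_{g_0}h)=\Delta_{g_0}(\tr_{g_0}h)$, since the curvature contribution $\tr_{g_0}(\Rm(g_0)\ast h)$ is a Ricci contraction and hence vanishes. Thus $u:=\tr_{g_0}h$ is an $L^2$ harmonic function, and the same Caccioppoli argument applied to $u$ gives $\int\eta_R^2|\nabla u|^2\le 4\int|\nabla\eta_R|^2u^2\to 0$, so $u$ is constant. Since $\vol(M,g_0)=\infty$ and $u\in L^2$, we conclude $\tr_{g_0}h=0$.

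For the divergence I would invoke the standard identity (proved via the second Bianchi identity together with $\nabla\Ric(g_0)=0$; see Besse) that $\div_{g_0}$ commutes with the Lichnerowicz Laplacian, so that $\div_{g_0}h$ lies in the kernel of the Lichnerowicz Laplacian acting on $1$-forms. On a Ricci-flat manifold the latter coincides, by Weitzenb\"ock, with the rough Laplacian $\nabla^*\nabla$. Hence $\nabla^*\nabla(\div_{g_0}h)=0$ with $\div_{g_0}h\in L^2$, and the Caccioppoli estimate forces $\div_{g_0}h$ to be parallel, thus of constant pointwise norm, hence identically $0$ by the infinite-volume argument. The algebraic commutation relations are formal consequences of Ricci-flatness; all the real work is in the decay control furnished by $\nabla h\in L^2$ and the cutoff functions.
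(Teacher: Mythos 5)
Your proof is correct and follows the same route as the paper: the commutation identities $\tr_{g_0}\circ L_{g_0}=\Delta_{g_0}\circ\tr_{g_0}$ and $\div_{g_0}\circ L_{g_0}=\Delta_{g_0}\circ\div_{g_0}$ reduce the claim to the vanishing of $L^2$ harmonic functions and one-forms. The only difference is that you spell out, via Caccioppoli cutoffs and the infinite volume of $(M,g_0)$, the Liouville step and the a priori bound $\nabla^{g_0} h\in L^2$ that the paper's one-line proof leaves implicit.
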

\begin{proof}
	Straightforward calculations show that $\tr_{g_0}\circ L_{g_0}=\Delta_{g_0}\circ \tr_{g_0}$ and $\div_{g_0}\circ L_{g_0}=\Delta_{g_0}\circ\div_{g_0}$. Therefore, $\tr_{g_0} h\in \ker_{L^2}(\Delta_{g_0})$ and $\div_{g_0} h\in \ker_{L^2} (\Delta_{g_0})$ which implies the statement of the lemma.
\end{proof}
\noindent The next proposition ensures that ALE steady Ricci solitons are Ricci-flat:
\begin{prop}\label{flatsoliton}
	Let $(M^n,g,X)$ be a steady Ricci soliton, i.e. $\Ric(g)=\mathcal{L}_X(g)$ for some vector field $X$ on $M$. Then $\lim_{+\infty}|X|_g=0$ implies $X=0$. In particular, any steady soliton in the sense of (\ref{def-F-set}) that is ALE with $\lim_{+\infty}V(g,g_0)=0$ is Ricci-flat.
\end{prop}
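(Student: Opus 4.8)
The plan is to reduce everything to the vanishing of $X$, which I would establish through a Bochner identity for $|X|^2$ combined with a maximum principle at infinity. Writing the soliton equation in the given normalization as $R_{ij}=\nabla_iX_j+\nabla_jX_i$ (identifying $X$ with its dual $1$-form), the first step is to record two consequences. Taking the trace gives $R=2\div X$, while taking the divergence and using the contracted second Bianchi identity $\nabla^iR_{ij}=\tfrac12\nabla_jR$ yields, after commuting derivatives via the Ricci identity,
\begin{equation*}
\tfrac12\nabla_jR=\Delta X_j+\nabla_j(\div X)+R_{jk}X^k.
\end{equation*}
Substituting $\div X=R/2$ cancels the two gradient terms and leaves the clean elliptic equation $\Delta X=-\Ric(X)$, where $\Delta$ denotes the rough (connection) Laplacian.

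Given this, the second step is a Bochner computation for $u:=|X|^2$. Using $\Delta X=-\Ric(X)$,
\begin{equation*}
\tfrac12\Delta u=|\nabla X|^2+\langle\Delta X,X\rangle=|\nabla X|^2-\Ric(X,X).
\end{equation*}
The soliton equation then supplies the algebraic identity $\Ric(X,X)=(\nabla_iX_j+\nabla_jX_i)X^iX^j=\langle X,\nabla u\rangle$, whence
\begin{equation*}
\tfrac12\Delta u+\langle X,\nabla u\rangle=|\nabla X|^2\geq0.
\end{equation*}
In other words, $u$ is a nonnegative subsolution of the drift Laplacian $\Delta+2\langle X,\nabla\,\cdot\,\rangle$, a second order elliptic operator with smooth coefficients and no zeroth order term.

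The final step is the maximum principle. Since $(M,g)$ is ALE, hence complete and connected, and since $|X|\to0$ at infinity, the function $u$ tends to $0$ at infinity. If $u$ were not identically zero it would attain a strictly positive maximum at an interior point; but Hopf's strong maximum principle forbids a nonconstant subsolution from attaining an interior maximum, and propagating this equality set across $M$ would force $u$ to be a positive constant, contradicting $u\to0$ at infinity. Hence $u\equiv0$, i.e.\ $X=0$, and then $\Ric=\mathcal{L}_Xg=0$. For the last assertion, a stationary point of (\ref{def-F-set}) satisfies $\Ric(g)=\tfrac12\Li_{V(g,g_0)}g=\mathcal{L}_{V/2}g$, so applying the first part with $X=V(g,g_0)/2$ (whose norm tends to $0$ by hypothesis) gives $V(g,g_0)=0$ and Ricci-flatness. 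I expect the only delicate points to be the sign bookkeeping in the derivation of $\Delta X=-\Ric(X)$ and the careful justification that the decay of $|X|$ at infinity legitimately serves as the boundary condition in the maximum principle.
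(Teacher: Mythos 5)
Your proposal is correct and follows essentially the same route as the paper: the contracted Bianchi identity combined with the traced soliton equation yields $\Delta X+\Ric(X)=0$, a Bochner computation then exhibits $|X|^2$ as a nonnegative subsolution of a drift Laplacian with no zeroth order term, and the maximum principle together with the decay of $|X|$ at infinity forces $X\equiv 0$, hence $\Ric(g)=0$. Your bookkeeping of the factors (e.g.\ $\Ric(X,X)=\langle X,\nabla|X|^2\rangle$ in your normalization) and the reduction of the last assertion to the first part via $X=V(g,g_0)/2$ are both fine.
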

\begin{proof}
	By the contracted Bianchi identity, one has:
	\begin{eqnarray*}
		\frac{1}{2}\nabla^g\R_g=\div_g\Ric(g)
		=\div_g\mathcal{L}_X(g)
		&=&\frac{1}{2}\nabla^g\tr_g(\mathcal{L}_X(g))+\Delta_gX+\Ric(g)(X)\\
		&=&\frac{1}{2}\nabla^g\R_g+\Delta_gX+\Ric(g)(X).
	\end{eqnarray*}
	Therefore, $\Delta_gX+\Ric(g)(X)=0$. In particular,
	\begin{eqnarray*}
		\Delta_g|X|_g^2+X\cdot|X|_g^2=2|\nabla^gX|_g^2+2<\nabla^g_XX,X>_g-2\Ric(g)(X,X)=2|\nabla^gX|_g^2,
	\end{eqnarray*}
	which establishes that $|X|_g^2$ is a subsolution of $\Delta_X:=\Delta_g+X\cdot$. The use of the maximum principle then implies the result in case $\lim_{+\infty}|X|=0$.
\end{proof}
\begin{theo}\label{ell-reg-prop}
	Let $(M^n,g_0)$ be an ALE Ricci-flat manifold  with order $\tau>0$.
	Let $g\in\mathcal{F}$ be in a sufficiently small neighbourhood of $g_0$ with respect to the
	$L^2\cap L^{\infty}$-topology.
	Then $g$ is an ALE Ricci-flat manifold of order $n-1$ with respect to the same coordinates as $g_0$.
\end{theo}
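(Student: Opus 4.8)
The plan is to bootstrap from the two hypotheses we are given—$g\in\mathcal F$ (so $g$ satisfies the gauged stationary equation \eqref{stat-evo-equ}) and $\|g-g_0\|_{L^2\cap L^\infty}$ small—to the conclusion that $h:=g-g_0$ decays like $r^{-(n-1)}$ together with all its derivatives in the $g_0$-coordinates at infinity. The starting point is that equation \eqref{stat-evo-equ} is, after writing $h=g-g_0$, a second-order quasilinear elliptic system whose leading part is $g^{ab}\nabla^{g_0,2}_{ab}h_{ij}$ and whose lower-order terms are zeroth order in $h$ (contracted against the fixed curvature $\Rm(g_0)$) plus quadratic first-order terms $\nabla^{g_0}h\ast\nabla^{g_0}h$. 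Because $g$ is $L^\infty$-close to $g_0$, the coefficient $g^{ab}$ is a small perturbation of $g_0^{ab}$, so the operator is uniformly elliptic with coefficients asymptotic to those of $\Delta_{g_0}$ at infinity.

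First I would establish interior Schauder/$W^{k,p}$ regularity: since $h\in L^2\cap L^\infty$ is a weak solution of a uniformly elliptic system with bounded coefficients, standard elliptic regularity upgrades $h$ to be smooth, and local elliptic estimates on unit balls $B_{g_0}(x,1)$ with $|x|\to\infty$ give interior bounds on $\nabla^{g_0,k}h$ in terms of the local $L^\infty$ (hence $L^2$) norm of $h$. This already shows the coefficients and the quadratic first-order terms are controlled and that $h\to 0$ at infinity with all derivatives; the quadratic terms $\nabla h\ast\nabla h$ then decay faster than the linear response and can be absorbed into the inhomogeneity.

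The heart of the argument is to convert this qualitative decay into the sharp rate $r^{-(n-1)}$ using the weighted-space Fredholm theory recalled in Section \ref{def-ALE}. I would rewrite \eqref{stat-evo-equ} schematically as $\Delta_{g_0}h = Q(h)$, where $Q(h)$ collects the error coming from $(g^{ab}-g_0^{ab})\nabla^{g_0,2}h$, the curvature term $\Rm(g_0)\ast h$, and the quadratic gradient terms. The operator $\Delta_{g_0}$ (acting on symmetric $2$-tensors) is Fredholm between weighted spaces $W^{k,p}_\delta\to W^{k-2,p}_{\delta-2}$ away from the discrete exceptional set $D$, and its indicial exponents in the relevant range are $0$ and $-(n-2)$, coming from the homogeneous harmonic functions on the flat cone. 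A Moser-type iteration or a comparison with the Green's function for $\Delta_{g_0}$ shows that a bounded solution decaying at infinity in fact lies in $W^{k,p}_\delta$ for some small $\delta<0$; one then improves $\delta$ step by step. At each stage, knowing $h=O(r^{\delta})$ makes the error $Q(h)=O(r^{2\delta-2})$ (quadratic terms) and $O(r^{\delta-2})$ (curvature term, since $\Rm(g_0)=O(r^{-\tau-2})$ with $\tau\geq n-1$ by \cite{Ban-Kas-Nak}), so $Q(h)$ lies in a weighted space with a better exponent than one would naively expect; inverting $\Delta_{g_0}$ and reading off the next admissible indicial root pushes the decay down toward $-(n-1)$. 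The process terminates at the first nonexceptional value above $-(n-1)$, and the fact that $(M,g_0)$ is already ALE of order $n-1$ (so there is no obstructing indicial root between $0$ and $-(n-1)$ that $h$ could get stuck at) is what pins the order exactly at $n-1$.

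The main obstacle I anticipate is bookkeeping the bootstrap so that it halts at the correct exponent rather than overshooting or stalling: one must check that the candidate exponent $-(n-1)$ is nonexceptional for the tensor Laplacian (not merely the scalar one) and that no intermediate indicial root in $(-(n-1),0)$ forces a logarithmic term or a slower rate. Handling the quadratic gradient terms carefully is essential here, since a crude estimate would give $Q(h)=O(r^{2\delta})$ and could stall the iteration before reaching $-(n-1)$; the gain comes precisely from the extra derivatives (each $\nabla^{g_0}$ lowering the weight by one) combined with the interior gradient estimates from the first step. Once $h=O(r^{-(n-1)})$ with matching derivative decay is established in the $g_0$-coordinates, the conclusion that $g$ is ALE of order $n-1$ with respect to the same chart follows immediately.
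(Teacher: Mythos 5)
Your overall architecture (initial decay via a Moser-type argument, then a bootstrap through weighted Fredholm theory for $\Delta_{g_0}$) matches the paper's proof up to a point, but there is a genuine gap at the decisive final step. You claim the iteration "terminates at the first nonexceptional value above $-(n-1)$" because "there is no obstructing indicial root between $0$ and $-(n-1)$." This is false, and you in fact name the obstruction yourself earlier in the paragraph: $-(n-2)$ is an indicial root of the (tensor) Laplacian, it lies strictly between $0$ and $-(n-1)$, and the expansion of a decaying solution of $\Delta_{g_0}h=Q(h)$ with rapidly decaying right-hand side generically contains a harmonic term $A_{ij}r^{-(n-2)}$. Your bootstrap therefore stalls at decay rate $n-2$, one order short of the claimed $n-1$. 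No amount of care with the quadratic gradient terms fixes this, since the obstruction sits in the kernel of the model operator, not in the inhomogeneity.

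The paper's proof supplies exactly the missing ingredient. After reaching $h_{ij}=v^{\gamma}_{ij}+A_{ij}r_0^{-n+2}+\textit{O}(r_0^{-n+1})$, it invokes Proposition \ref{flatsoliton} to conclude that for $g\in\mathcal{F}$ the two equations $\Ric(g)=0$ and $V(g,g_0)=0$ hold \emph{separately} (an ALE steady soliton with decaying vector field is Ricci-flat). The gauge condition $g^{ij}(\Gamma(g)_{ij}^k-\Gamma(g_0)_{ij}^k)=0$, evaluated on the leading harmonic term, yields $(Az-\frac{1}{2}\tr(A)z)^k(2-n)|z|^{-n}=\textit{O}(r_0^{-n})$, forcing $A-\frac{1}{2}\tr(A)\,\mathrm{Id}=0$ and hence $A=0$; only then does $h=\textit{O}(r^{-n+1})$ follow. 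Your proposal never uses the gauge part of the equation $-2\Ric(g)+\Li_{V(g,g_0)}g=0$ beyond its contribution to ellipticity, so it cannot rule out the $r^{-(n-2)}$ mode. To repair the argument you would need to add the splitting of the stationary equation (Proposition \ref{flatsoliton}) and the computation that the gauge condition annihilates the coefficient $A_{ij}$ of the leading harmonic term.
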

\begin{rk}	
	\begin{itemize}
		\item[(i)] If $n=4$ or $g_0$ is K\"ahler, it seems likely that $g\in\mathcal{F}$ is ALE  of order $n$ with respect to the same coordinates as $g_0$. However, we don't need this decay for further considerations.
		\item[(ii)] A priori, Proposition \ref{ell-reg-prop} does not assume any integral or pointwise decay on the difference tensor $g-g_0$ or on the curvature tensor of $g$.
		The assumptions on $g$ can be even weakened as follows: If $\left\|g-g_0\right\|_{L^p(g_0)}\leq K<\infty$ for some $p\in[2,\infty)$ and $\left\|g-g_0\right\|_{L^{\infty}(g_0)}<\epsilon=\epsilon(g_0,p,K)$, then the conclusions of Theorem \ref{ell-reg-prop} hold.		
	\end{itemize}
\end{rk}
\begin{proof}[Proof of Theorem \ref{ell-reg-prop}]
	The first step consists in applying a Moser iteration to the norm of the difference of the two metrics : $\arrowvert h\arrowvert_{g_0}:=|g-g_0|_{g_0}$. Indeed, recall that $h$ satisfies \eqref{stat-evo-equ} which can also be written as
	\begin{eqnarray*}
		&&g^{-1}\ast\nabla^{g_0,2}h+2\Rm(g_0)\ast h=\nabla^{g_0}h\ast\nabla^{g_0}h,\\
		&& g^{-1}\ast \nabla^{g_0,2}h_{ij}:=g^{kl}\nabla^{g_0,2}_{kl}h_{ij}.
	\end{eqnarray*}
	In particular,
	\begin{eqnarray*}
		g^{-1}\ast\nabla^{g_0,2}\arrowvert h\arrowvert_{g_0}^2&=&2g^{-1}(\nabla^{g_0}h,\nabla^{g_0}h)-4\langle\Rm(g_0)\ast h,h\rangle_{g_0}+\langle\nabla^{g_0}h\ast \nabla^{g_0}h,h\rangle_{g_0},\\
		g^{-1}(\nabla^{g_0}h,\nabla^{g_0}h)&:=&g^{ij}\nabla^{g_0}_ih_{kl}\nabla^{g_0}_jh_{kl}.
	\end{eqnarray*}
	Therefore, as $\|h\|_{L^{\infty}(g_0)}\leq \epsilon$ where $\epsilon>0$ is a sufficiently small constant depending on $n$ and $g_0$, we get
	\begin{align*}
		g^{-1}\ast\nabla^{g_0,2}\arrowvert h\arrowvert_{g_0}^2&\geq 2|\nabla^{g_0}h|_{g_0}^2+2(g^{-1}-g_0^{-1})(\nabla^{g_0}h,\nabla^{g_0}h)\\
		&\quad-c(n)\arrowvert\Rm(g_0)\arrowvert_{g_0}\arrowvert h\arrowvert^2_{g_0}-
		c(n)|h|_{g_0}|\nabla^{g_0}h|_{g_0}^2\\
	&	\geq	-c(n)\arrowvert\Rm(g_0)\arrowvert_{g_0}\arrowvert h\arrowvert^2_{g_0}.
	\end{align*}
	As $|\Rm(g_0)|\in L^{n/2}(M)$ and $h\in L^2(S^2T^*M)$, Lemma $4.6$ and Proposition $4.8$ of \cite{Ban-Kas-Nak} tell us that $|h|^2=\textit{O}(r^{-\tau})$ at infinity for any positive $\tau<n-2$, i.e. $h=\textit{O}(r^{-\tau})$ for any $\tau<n/2-1$. Here, $r$ denotes the distance function on $M$ centered at some arbitrary point $x\in M$.
	
	The next step is to show that
	$\nabla^{g_0} h=\textit{O}(r^{-\tau-1})$ for $\tau<n/2-1$. Assume $p\geq2$.
    We first proceed with a chain of inequalities as follows:
	\begin{align*}
	\left\|h\right\|_{W^{2,p}_{-\tau}(g_0)}&\leq C(\left\|g^{-1}\ast\nabla^{g_0,2}h\right\|_{L^p_{-\tau-2}(g_0)}+\left\|h\right\|_{L^p_{-\tau}(g_0)})\\
	&\leq C(\left\| |\nabla^{g_0} h|^2\right\|_{L^{p}_{-\tau-2}(g_0)}+\left\|h\right\|_{L^p_{-\tau}(g_0)})\\
	&\leq C( \left\|\nabla^{g_0,2}h\right\|_{L^p_{-\tau-2}(g_0)}+\left\|\nabla^{g_0} h\right\|_{L^p_{-\tau-1}(g_0)})\left\|h\right\|_{L^{\infty}(g_0)}+C\left\|h\right\|_{L^p_{-\tau}(g_0)}.
	\end{align*}
	Here, the first inequality follows from elliptic regularity for weighted Sobolev spaces, see \cite[Theorem $4.21$]{Mar-Phd}. The second inequality uses equation \eqref{stat-evo-equ} and the third inequality follows from an interpolation inequality for weighted spaces, see Lemma \ref{weighted_interpolation} below.
	This implies
	\begin{align*}
	\left\|h\right\|_{C^{1,\alpha}_{-\tau}(g_0)}\leq C\left\|h\right\|_{W^{2,p}_{-\tau}(g_0)}\leq C \left\| h\right\|_{L^p_{-\tau}(g_0)}\leq C \left\| h\right\|_{L^{\infty}_{-\tau-\epsilon}(g_0)}<\infty
	\end{align*}
	for all $p\in(n,\infty)$ and $\tau+\epsilon<\frac{n}{2}-1$ provided that the $L^{\infty}$-norm is small enough.
Here, the first inequality follows from weighted Sobolev embedding, see \cite[Remark 6.9]{Pac-T}. The second inequality follows from the above and the third inequality follows from the weighted H\"{o}lder inequality (see e.g.\ \cite[Lemma 6.7]{Pac-T}) applied to $h$ and the weight function itself.	
	 Consequently,
	\begin{eqnarray*}
		\nabla^{g_0} h=\textit{O}(r^{-1-\tau}).
	\end{eqnarray*}
	In the following we will further improve the decay order and show that $h=\textit{O}(r^{-n+1})$. As a consequence of elliptic regularity for weighted H\"{o}lder spaces (\cite[Theorem $4.21$]{Mar-Phd}), we will furthermore get $\nabla^{g_0,k}h=\textit{O}(r^{-n+1-k})$ for any $k\in\mathbb{N}$. To prove these statements we adapt the strategy in \cite[p. 325-327]{Ban-Kas-Nak}.
	
	As $\Rm(g_0)=\textit{O}(r^{-n-1})$, $h=\textit{O}(r^{-\tau})$ and $\nabla^{g_0} h=\textit{O}(r^{-\tau-1})$ for some fixed $\tau$ slightly smaller than $\frac{n}{2}-1$, equation \eqref{stat-evo-equ} implies that $\Delta_{g_0} h=\textit{O}(r^{-2\tau-2})$. Thus, $\Delta_{g_0} h=\textit{O}(r^{-\mu})$ for some $\mu$ slightly smaller than $n$.
	
	Let $\varphi: M\setminus B_R(x)\to (\mathbb{R}^n\setminus B_R)/\Gamma $ be coordinates at infinity with respect to which $g_0$ is ALE of order $n-1$. Let furthermore $\Pi:\mathbb{R}^n\setminus B_R\to (\mathbb{R}^n\setminus B_R)/\Gamma$ be the projection map. From now on,
	we consider the objects on $M$ as objects on $\mathbb{R}^n\setminus B_R$ by identifying them with the pullbacks under the map $\Pi\circ \varphi^{-1}$. To avoid any confusion, we denote the pullback of $\Delta_{g_0}$ by $\Delta$. The operator $\Delta_0$ denotes the Euclidean Laplacian on $\mathbb{R}^n\setminus B_R$.
	
	Let $r_0=|z|$ be the euclidean norm as a function on  $\mathbb{R}^n\setminus B_R$. Then we have, for any $\beta\in\mathbb{R}$,
	\begin{align*}
	\Delta r_0^{-\beta}=\Delta_0r_0^{-\beta}+(\Delta-\Delta_0)r_0^{-\beta}=\beta(\beta-n+2)r_0^{-\beta-2}+O(r_0^{-\beta-n-1}).
	\end{align*}
	Let $u=h_{ij}$ for any $i,j$. For any constant $A>0$, we have
	\begin{align*}
	\Delta (A\cdot r_0^{-\beta}\pm u)= (A\cdot\beta(\beta-n+2)+ \textit{O}( r_0^{-n+1})+\textit{O}(r_0^{-\mu+\beta+2}))r_0^{-\beta-2}.
	\end{align*}
	If we choose $\beta>0$ so that $\beta+2<\mu<n$, we can choose $A$ so large that
	\begin{align*}
	\Delta (A\cdot r_0^{-\beta}\pm u)<0 \qquad A\cdot r_0^{-\beta}\pm u>0 \text{ if }r_0=R.
	\end{align*}
	The strong maximum principle then implies that $u=\textit{O}(r_0^{-\beta})$ and we also get $\Delta u=\textit{O}(r_0^{-\beta-2})$. By elliptic regularity, $u\in W^{2,p}_{-\beta}(\mathbb{R}^n\setminus B_R)$ for all $p\in[1,\infty)$.
	 We now claim that we can \eqref{stat-evo-equ} to prove $\Delta_0 u\in L^{p}_{-2\beta-2}(\mathbb{R}^n\setminus B_R)$. To do so, we first compute
	 \begin{align*}
	 g^{ab}\nabla^{g_0,2}_{ab}h_{ij}&=\Delta_0h_{ij}+(g^{ab}-g_0^{ab})\nabla^{g_0,2}_{ab}h_{ij}+(g_0^{ab}-{\delta}^{ab})\nabla^{g_0,2}_{ab}h_{ij}\\
	 &\qquad +\delta^{ab}(\Gamma(g_0)*\Gamma(g_0)*h+\partial\Gamma(g_0)*h+\Gamma(g_0)*\nabla^{g_0} h).
	 \end{align*}
	 Here $\partial$ denotes the coordinate derivative on $\mathbb{R}^n$. We rewrite \eqref{stat-evo-equ} schematically as
	 \begin{align*}
	 g^{ab}\nabla^{g_0,2}_{ab}h=\Rm(g_0)*h+\nabla^{g_0}h*\nabla^{g_0}h.
	 \end{align*}
	 A combination of these two formulas combined with standard estimates yields
	 \begin{align*}
	 \left\|\Delta_0 u\right\|_{L^{p}_{-2\beta-2}}&\leq C(\left\|h\right\|_{L^{\infty}_{-\beta}}\left\|\nabla^{g_0,2}h\right\|_{L^p_{-\beta-2}}+\left\|g_0-\delta\right\|_{L^{\infty}_{-\beta}}\left\|\nabla^{g_0,2}h\right\|_{L^p_{-\beta-2}}\\
	 &\qquad+\left\|\Rm(g_0)\right\|_{L^{\infty}_{-\beta-2}}\left\|h\right\|_{L^p_{\beta}}+\left\|\nabla^{g_0}h\right\|_{L^{2p}_{-\beta-1}}^2\\
	 &\qquad +\left\|\Gamma(g_0)*\Gamma(g_0)\right\|_{L^{\infty}_{-\beta-2}}\left\|h\right\|_{L^p_{-\beta}}+\left\|\partial\Gamma(g_0)\right\|_{L^{\infty}_{-\beta-2}}\left\|h\right\|_{L^p_{-\beta}}\\
	 &\qquad +\left\|\Gamma(g_0)\right\|_{L^{\infty}_{-\beta-1}}\left\|\nabla^{g_0}h\right\|_{L^p_{-\beta-1}}
	 )
	 \end{align*}
	 and the right hand side is finite because $g_0-\delta=O(r^{-n+1})$, $\Gamma(g_0)=O(r^{-n})$, $\partial\Gamma(g_0)=O(r^{-n-1})$, $\Rm(g_0)=O(r^{-n-1})$ and $\beta<n-2$. This proves the claim.
	 
	 Recall that a weight parameter $\delta\in\mathbb{R}$ is called exceptional if $k\notin \mathbb{Z}\setminus\left\{-1,-2,\ldots,3-n\right\}$. Now for all non-exceptional values $-\gamma$, \cite[Theorem 1.7]{Bar-Mass} 
	implies that the Euclidean Laplacian is an isomorphism as a map  \begin{align*}\Delta_0:W'^{2,p}_{-\gamma}(\mathbb{R}^n\setminus \left\{0 \right\})\to L'^p_{-\gamma-2}(\mathbb{R}^n\setminus \left\{0 \right\}).
	\end{align*}
	Thus for all non-exceptional values $-\gamma$ satisfying  $\gamma<2\beta<2(n-2)$, there exists a function $v^{\gamma}_{ij}\in W^{2,p}_{-\gamma}(\mathbb{R}^n\setminus B_R)$ such that $\Delta_0v^{\gamma}_{ij}=\Delta_0h_{ij}$. 
	By the expansion of harmonic functions on $\mathbb{R}^n$, we have
	\begin{align*}
	h_{ij}-v^{\gamma}_{ij}=A_{ij}r_0^{-n+2}+\textit{O}(r_0^{-n+1}),
\qquad \partial_ih_{jk}-\partial_iv^{\gamma}_{jk}=(2-n)|z|^{-n}A_{jk}z_i+	\textit{O}(r_0^{-n}).
	\end{align*}
	Here, $z=(z_1,\ldots,z_n)$ denotes the coordinates on $\mathbb{R}^n$ so that $|z|=r_0$.
	Sobolev ebedding implies $v^{\gamma}_{ij}\in C^{1,\alpha}_{1-n}(\mathbb{R}^n\setminus B_R)$. Therefore,
\begin{align}\label{first_decay}
h_{ij}=A_{ij}r_0^{-n+2}+\textit{O}(r_0^{-n+1}),\qquad  \partial_ih_{jk}=(2-n)|z|^{-n}A_{jk}z_i+\textit{O}(r_0^{-n}).
\end{align}	
	 We now can use Proposition \ref{flatsoliton} to improve this decay rate slightly by getting rid of the $A_{ij}$.
	In fact, the proposition implies that the equations $\mathrm{Ric}(g)=0$ and $V(g,g_0)=0$ hold individually. Therefore, \begin{equation}\begin{split}\label{improved_decay}0=g^{ij}(\Gamma(g)_{ij}^k-\Gamma(g_0)_{ij}^k)&=
\frac{1}{2}g^{ij}g^{kl}(\partial_ig_{jl}+\partial_jg_{il}-\partial_lg_{ij})\\&\qquad -\frac{1}{2}g^{ij}(g_0)^{kl}(\partial_i(g_0)_{jl}+\partial_j(g_0)_{il}-\partial_l(g_0)_{ij})	\\
	&=\frac{1}{2}\delta^{ij}\delta^{kl}(\partial_ih_{jl}+\partial_jh_{il}-\partial_lh_{ij})\\
	&\qquad +\frac{1}{2}(g^{ij}-\delta^{ij})\delta^{kl}(\partial_ih_{jl}+\partial_jh_{il}-\partial_lh_{ij})\\
	&\qquad +\frac{1}{2}g^{ij}(g^{kl}-\delta^{kl})(\partial_ih_{jl}+\partial_jh_{il}-\partial_lh_{ij})
		\\
	&\qquad+\frac{1}{2}g^{ij}(g^{kl}-(g_0)^{kl})(\partial_i(g_0)_{jl}+\partial_j(g_0)_{il}-\partial_l(g_0)_{ij})\\
	&= \frac{1}{2}\delta^{ij}\delta^{kl}(\partial_ih_{jl}+\partial_jh_{il}-\partial_lh_{ij})\\
	&\qquad+\textit{O}(r_0^{-n+2})\cdot \textit{O}(r_0^{-n+2})+\textit{O}(r_0^{-n+2})\cdot \textit{O}(r_0^{-n+1})
	\\ &=
	(Az-\frac{1}{2}\tr(A)z)^k(2-n)|z|^{-n}+\textit{O}(r_0^{-n}),
	\end{split}
	\end{equation}
	which implies that $A_{ij}=0$ and thus, $h=\textit{O}(r^{-n+1})$.
	As $h_{ij}=v^{\gamma}_{ij}+\textit{O}(r_0^{-n+1})$ with $v^{\gamma}_{ij}\in W^{2,p}_{-\gamma}(\mathbb{R}^n\setminus B_R)$ and an harmonic remainder term, Sobolev embedding and elliptic regularity imply that $h_{ij}\in C^{1,\alpha}_{1-n}(\mathbb{R}^n\setminus B_R)$, so that $\nabla^{g_0} h=\textit{O}(r^{-n})$.
	Elliptic regularity for weighted H\"{o}lder spaces (\cite[Theorem $4.21$]{Mar-Phd} again) implies that $\nabla^{g_0,k} h=\textit{O}(r^{-n+1-k})$ for all $k\in\mathbb{N}$.
\end{proof}
\begin{lemma}\label{weighted_interpolation}
Let $p\in [2,\infty)$, $\tau\in\mathbb{R}$ and a fixed ALE-metric be given. Let $h$ be a section in a Riemannian vector bundle  with connection.
If $h\in L^{\infty}$ with $\nabla h\in W^{1,p}_{\tau+1}$, we have $|\nabla h|^2\in L^p_{\tau}$ and the weighted interpolation inequality
	\begin{align*}
	 \left\| |\nabla h|^2\right\|_{L^{p}_{\tau}}
	&\leq C( \left\|\nabla^{2}h\right\|_{L^p_{\tau}}+\left\|\nabla h\right\|_{L^p_{\tau+1}})\left\|h\right\|_{L^{\infty}}.
	\end{align*}
\end{lemma}
\begin{proof}
By density, we may assume that $h$ is compactly supported. To avoid differentiability issues at $0$, we introduce the quantity $|h|_{\delta}=(\langle h,h\rangle +\delta)^{1/2}$. Recall that $\rho$ is the weight function on the manifold. Then we compute
\begin{align*}
\int_M |\nabla h|^{2p}\rho^{\alpha}d\mu&\leq \int_M \langle \nabla h,\nabla h\rangle |\nabla h|_{\delta}^{2p-2}\rho^{\alpha}d\mu\\
&=-\int_M \langle h,\Delta h\rangle|\nabla h|_{\delta}^{2p-2}\rho^{\alpha}d\mu+\int_M \langle h,\nabla h\rangle  (\nabla  |\nabla h|_{\delta}^{2p-2})\rho^{\alpha}d\mu\\
&\qquad + \int_M \langle h,\nabla h\rangle  |\nabla h|_{\delta}^{2p-2}\nabla\rho\cdot\rho^{\alpha-1}d\mu\\
&\leq \int_M |h||\nabla^2h||\nabla h|_{\delta}^{2p-2}\rho^{\alpha}d\mu+C\int_M |h||\nabla h|^2|\nabla^2h||\nabla h|_{\delta}^{2p-4}\rho^{\alpha}d\mu\\
&\qquad +C\int_M |h||\nabla h|  |\nabla h|_{\delta}^{2p-2}\rho^{\alpha-1}d\mu,
\end{align*}
where we used that $|\nabla \rho|$ is bounded in the last step. By letting $\delta\to 0$, we obtain
\begin{align*}
\int_M |\nabla h|^{2p}\rho^{\alpha}s\mu&\leq C\int_M |h||\nabla^2h||\nabla h|^{2p-2}\rho^{\alpha}d\mu+C\int_M |h||\nabla h|^{2p-1}\rho^{\alpha-1}d\mu.
\end{align*}
By the Young inequality,
\begin{align*}
|h||\nabla^2h||\nabla h|^{2p-2}&\leq \epsilon |\nabla h|^{2p}+C(\epsilon)|h|^p|\nabla^2 h|^p,\\
|h||\nabla h|\rho^{-1}|\nabla h|^{2p-2}&\leq \epsilon |\nabla h|^{2p}+C(\epsilon)|h|^p|\nabla h|^p\rho^{-p}.
\end{align*}
Therefore,
\begin{align*}
\int_M |\nabla h|^{2p}\rho^{\alpha}d\mu&\leq 2C\epsilon \int_M |\nabla h|^{2p} \rho^{\alpha}d\mu+ C\cdot C(\epsilon)
\int_M |h|^p|\nabla^2 h|^p\rho^{\alpha}d\mu\\ &\qquad +C\cdot C(\epsilon)\int_M |h|^p|\nabla h|^p\rho^{\alpha-p}d\mu.
\end{align*}
Let us choose $\epsilon<\frac{1}{2C}$. Then we obtain, by subtracting the first term on the right hand side and by dividing by a constant
\begin{align*}
\int_M |\nabla h|^{2p}\rho^{\alpha}d\mu\leq C
\int_M |h|^p|\nabla^2 h|^p\rho^{\alpha}d\mu +C\int_M |h|^p|\nabla h|^p\rho^{\alpha-p}d\mu.
\end{align*}
Taking the $p$-th root yields, with a new constant $C$:
\begin{align*}
\left(\int_M |\nabla h|^{2p}\rho^{\alpha}d\mu\right)^{\frac{1}{p}}&\leq C\left(\int_M |h|^p|\nabla^2 h|^p\rho^{\alpha}d\mu\right)^{\frac{1}{p}}+C\left( \int_M |h|^p|\nabla h|^p\rho^{\alpha-p}d\mu\right)^{\frac{1}{p}}\\
&\leq C\left\|h\right\|_{L^{\infty}}\left\{\left(\int_M |h|^p|\nabla^2 h|^p\rho^{\alpha}d\mu\right)^{\frac{1}{p}}+\left( \int_M |h|^p|\nabla h|^p\rho^{\alpha-p}d\mu\right)^{\frac{1}{p}} \right\}
\end{align*}
Inserting $\alpha=-\tau p-n$ yields the desired inequality.
\end{proof}

\begin{rk}\label{lin-decay}
	The proof of the above theorem also applies to $h\in\ker_{L^2}(L_{g_0})$ with the only difference that the first equality in 
	\eqref{improved_decay} is replaced by the
	formula $0=\div_{g_0}h-\frac{1}{2}\nabla^{g_0}\tr_{g_0} h$ but which admits the same asymptotic expansion. This formula in turn  holds because $h$ is a TT-tensor due to Proposition \ref{tttensors}.
	Therefore we can conclude that $h=\textit{O}_{\infty}(r^{-n+1})$  if $h\in\ker_{L^2}(L_{g_0})$.
\end{rk}
\begin{theo}\label{analyticset}
	Let $(M,g_0)$ be an ALE Ricci-flat metric and $\mathcal{F}$ as above. Then there exists an $L^2\cap L^{\infty}$-neighbourhood $\mathcal{U}$ of $g_0$ in the space of metrics and a finite-dimensional real-analytic submanifold $\mathcal{Z}\subset\mathcal{U}$ with $T_{g_0}\mathcal{Z}=ker_{L^2}(L_{g_0})$
	such that $\mathcal{U}\cap\mathcal{F}$
	is an analytic subset of $\mathcal{Z}$. In particular if $g_0$ is integrable, we have  $\mathcal{U}\cap\mathcal{F}=\mathcal{Z}$.	
\end{theo}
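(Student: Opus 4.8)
The plan is to carry out a Lyapunov--Schmidt reduction for the nonlinear map whose zero set is $\mathcal{F}$, exploiting its real-analyticity together with the Fredholm theory for $L_{g_0}$ on weighted spaces. Writing $h=g-g_0$, I set
\[
\Phi(h):=-2\Ric(g_0+h)+\Li_{V(g_0+h,g_0)}(g_0+h),
\]
so that $\mathcal{F}$ is locally $\{g_0+h:\Phi(h)=0\}$. By Theorem \ref{ell-reg-prop}, any $g\in\mathcal{F}$ near $g_0$ satisfies $h=O(r^{-n+1})$ together with all its derivatives, so I would realize $\Phi$ as a map between weighted H\"older spaces $C^{2,\alpha}_{\delta}\to C^{0,\alpha}_{\delta-2}$ for a suitable negative weight $\delta$. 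The first point to check is that $\Phi$ is real-analytic between these Banach spaces: in $g_0$-coordinates its components are polynomial in $h,\nabla^{g_0}h,\nabla^{g_0,2}h$ and in the entries of $(g_0+h)^{-1}$, and since matrix inversion is real-analytic near $g_0^{-1}$, so is $\Phi$. Its differential at $0$ is $D\Phi_0=L_{g_0}$, as recorded after \eqref{stat-evo-equ}.

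The second, and most delicate, step is the functional-analytic setup. I would fix $\delta$ nonexceptional for $L_{g_0}$ and close to the $L^2$-weight, so that $L_{g_0}:C^{2,\alpha}_{\delta}\to C^{0,\alpha}_{\delta-2}$ is Fredholm and its kernel in this space coincides with $K:=\ker_{L^2}(L_{g_0})$. That $K$ lies in the weighted space follows from the decay $O(r^{-n-1})$ of Remark \ref{lin-decay}; the reverse inclusion holds because, for the chosen weight, every weighted kernel element decays fast enough to be in $L^2$. Since $L_{g_0}$ is formally self-adjoint, I would use the $L^2$-duality of weighted spaces to identify the cokernel at the dual weight with a kernel, and arrange by a symmetric choice of $\delta$ that both kernel and cokernel equal $K$. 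The key subtlety is that, because $0$ lies in the essential $L^2$-spectrum, $L_{g_0}$ has no bounded inverse on the $L^2$-orthogonal complement of $K$; it is exactly the passage to nonexceptional weights (and the absence of exceptional weights in a suitable interval about the $L^2$ value) that restores the Fredholm property and makes the reduction possible.

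With this in place the reduction is routine. Using the $L^2$-orthogonal splitting $h=\Pi_c h+\Pi_s h$, I solve $\Pi_s\Phi(h)=0$ via the analytic implicit function theorem: since $\Pi_s L_{g_0}$ is an isomorphism from $\ker\Pi_c$ onto $\ima\Pi_s$ in the weighted spaces, there is a unique real-analytic map $k\mapsto w(k)\in\ker\Pi_c$, defined for small $k\in K$ with $w(0)=0$, such that $\Pi_s\Phi(k+w(k))=0$; differentiating this relation at $k=0$ and using $L_{g_0}k=0$ gives $Dw(0)=0$. I then set
\[
\mathcal{Z}:=\{\,g_0+k+w(k):k\in K,\ |k|\ \text{small}\,\},
\]
a finite-dimensional real-analytic submanifold with $T_{g_0}\mathcal{Z}=K=\ker_{L^2}(L_{g_0})$, and define the real-analytic reduced map $f(k):=\Pi_c\Phi(k+w(k))$ from a neighbourhood of $0$ in $K$ to $K$. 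Since $\Phi(k+w(k))=\Pi_c\Phi(k+w(k))=f(k)$, we get $\mathcal{U}\cap\mathcal{F}=\{g_0+k+w(k):f(k)=0\}$, i.e. an analytic subset of $\mathcal{Z}$.

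For the final clause, note that by Proposition \ref{flatsoliton} and Theorem \ref{ell-reg-prop} the elements of $\mathcal{U}\cap\mathcal{F}$ are genuinely ALE Ricci-flat and gauged, so $\Psi_{g_0}$ of Definition \ref{defn-integrable} applies and sends $g_0+k+w(k)$ to $\Pi_c(k+w(k))=k$, as $w(k)\in\ker\Pi_c$. Hence $\Psi_{g_0}$ is a local diffeomorphism at $g_0$ precisely when $\{f=0\}$ fills a full neighbourhood of $0$ in $K$, which by real-analyticity of $f$ holds if and only if $f\equiv 0$; in that case $\mathcal{U}\cap\mathcal{F}=\mathcal{Z}$. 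I expect the main obstacle to be the second step above---selecting a weight for which $L_{g_0}$ is simultaneously Fredholm and has both kernel and cokernel equal to the geometric kernel $K$, despite $0$ belonging to the essential $L^2$-spectrum---while the analyticity and the reduction follow the standard Koiso-type pattern of the closed case.
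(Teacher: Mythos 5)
Your proposal is correct and follows essentially the same route as the paper: use the decay from Theorem \ref{ell-reg-prop} to place $\mathcal{F}$ inside a weighted function space on which $\Phi$ is real-analytic and $L_{g_0}$ is Fredholm with kernel equal to $\ker_{L^2}(L_{g_0})$, and then perform a Lyapunov--Schmidt reduction. The only differences are cosmetic: the paper works with weighted Sobolev (Hilbert) spaces $H^k_{\delta}$ and delegates the reduction to Koiso's Lemma 13.6, whereas you use weighted H\"older spaces and carry out the reduction explicitly, additionally spelling out the proof of the final clause ($f\equiv 0$ by real-analyticity) that the paper leaves implicit.
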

\begin{proof}
	Let $\Phi:g\mapsto-2\Ric(g)+\Li_{V(g,g_0)}(g)$ and $\mathcal{B}_{L^2\cap L^{\infty}}(g_0,\epsilon)$ be the $\epsilon$-ball with respect to the $L^2\cap L^{\infty}$-norm induced by $g_0$ and centered at $g_0$.
	By Theorem \ref{ell-reg-prop}, we can choose $\epsilon>0$ be so small  that any $g\in\mathcal{F}\cap \mathcal{B}_{L^2\cap L^{\infty}}(g_0,\epsilon)$ satisfies the condition $g-g_0=\textit{O}_{\infty}(r^{-n+1}$) so that $\left\|g-g_0\right\|_{H^k_{\delta}}<\infty$ for any $k\in \mathbb{N}$ and $\delta >-n+1$.
	
	Suppose now in addition that $k>n/2+2$ and $\delta\leq-n/2$ 
	and let $\mathcal{V}$ be a $H^k_{\delta}$-neighbourhood of $g_0$ with 
	$\mathcal{V}\subset \mathcal{B}_{L^2\cap L^{\infty}}(g_0,\epsilon_1)$. Then the map 
	$\Phi$, considered as a map
	$\Phi: H^k_{\delta}(S^2T^*M)\supset\mathcal{V}\to H^{k-2}_{\delta-2}(S^2T^*M)$ is a real analytic map between Hilbert manifolds. If $\delta$ is nonexceptional, the differential $d\Phi_{g_0}=L_{g_0}: H^k_{\delta}(S^2T^*M)\to H^{k-2}_{\delta-2}(S^2T^*M)$ is Fredholm.
	By  \cite[Lemma $13.6$]{Koiso-Cx-Str}, there exists (possibly after passing to a smaller neighbourhood) a finite-dimensional real-analytic submanifold
	$\mathcal{W}\subset \mathcal{V}$ with $g_0\in \mathcal{W}$ and $T_{g_0}\mathcal{W}=\ker_{H^k_{\delta}}(L_{g_0})$
	such that $\mathcal{V}\cap\Phi^{-1}(0)\subset\mathcal{W}$ is a real-analytic subset.
	
	By the proof of Theorem \ref{ell-reg-prop}, we can choose an $L^2\cap L^{\infty}$-neighbourhood  $\mathcal{U}_{g_0}\subset  \mathcal{B}_{L^2\cap L^{\infty}}(g_0,\epsilon)$  of $g_0$ so small that $\mathcal{U}_{g_0}\cap\mathcal{F}\subset\mathcal{V}$ (provided that $\mathcal{V}$ is small enough). Then the set $\mathcal{Z}=\mathcal{U}_{g_0}\cap\mathcal{W}$ fulfills the desired properties because
	$T_{g_0}\mathcal{Z}=T_{g_0}\mathcal{W}=\ker_{H^k_{\delta}}(L_{g_0})=\ker_{L^2}(L_{g_0})$ due to the asymptotics of elements in $\ker_{L^2}(L_{g_0})$ shown in Proposition \ref{ell-reg-prop} and Remark \ref{lin-decay}.
	%
	%
\end{proof}

\begin{prop}\label{slice}
	Let $(M^n,g_0)$ be an ALE Ricci-flat manifold and let $k>n/2+1$ and $\delta\in(-n+1, -n/2]$ nonexceptional. 
	Then there exists a $H^k_{\delta}$-neighbourhood $\mathcal{U}^k_{\delta}$ of $g_0$ in the space of metrics such that
	the set
	\begin{align*}
	\mathcal{G}_{\delta}^k:=\left\{g\in\mathcal{U}_{\delta}^k\mid g^{ij}(\Gamma(g)_{ij}^l-\Gamma(g_0)_{ij}^l)=0\right\},
	\end{align*}
	is a smooth manifold. Moreover, for any $g\in \mathcal{U}_{\delta}^k$, there exists a unique diffeomorphism $\varphi$ which is $H^{k+1}_{\delta+1}$-close to the identity such that $\varphi^*g\in \mathcal{G}_{\delta}^k$.
\end{prop}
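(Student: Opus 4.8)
The plan is to realise $\mathcal{G}^k_\delta$ as the zero set of the DeTurck gauge map and to produce the gauging diffeomorphism by the implicit function theorem, the whole argument resting on the invertibility of a single elliptic operator on weighted spaces. Write $P(g):=V(g,g_0)$, given in coordinates by $g^{ij}(\Gamma(g)^k_{ij}-\Gamma(g_0)^k_{ij})$. Since this expression is algebraic in $g^{-1}$ and linear in $\nabla^{g_0}g$, and since $k>n/2+1$ gives $H^k_\delta\hookrightarrow C^1$ so that $g$ stays uniformly equivalent to $g_0$ and $g^{-1}$ is controlled, $P$ defines a real-analytic map $P\colon\mathcal{U}^k_\delta\to H^{k-1}_{\delta-1}(TM)$ after shrinking the neighbourhood; by construction $\mathcal{G}^k_\delta=P^{-1}(0)$. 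Writing $h=g-g_0$ and using that the Christoffel difference vanishes at $g_0$, its differential is the Bianchi operator $dP_{g_0}(h)=\div_{g_0}h-\tfrac12\,d(\tr_{g_0}h)$, and a direct computation, commuting two covariant derivatives and dropping the Ricci term because $g_0$ is Ricci-flat, yields the linearised DeTurck trick
$$dP_{g_0}(\Li_X g_0)=\Delta_{g_0}X,$$
where $\Delta_{g_0}$ denotes the connection Laplacian on vector fields.

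The analytic heart of the proof is to show that
$$\Delta_{g_0}\colon H^{k+1}_{\delta+1}(TM)\longrightarrow H^{k-1}_{\delta-1}(TM)$$
is an isomorphism. Fredholmness for nonexceptional weights is the weighted elliptic theory recalled in Section \ref{def-ALE}. For injectivity I would invoke the Bochner identity on the Ricci-flat manifold: if $\Delta_{g_0}X=0$ then $\Delta_{g_0}|X|^2_{g_0}=2|\nabla^{g_0}X|^2_{g_0}\ge0$, so $|X|^2_{g_0}$ is subharmonic, and since $\delta+1<0$ forces $|X|_{g_0}\to0$ at infinity, the maximum principle gives $X=0$. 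Surjectivity I would deduce by duality, identifying the cokernel with the kernel of $\Delta_{g_0}$ in the dual weight and checking that it is again trivial for the weights at hand. This last point is where the weight bookkeeping matters: one needs $\delta+1$ to sit strictly above the lowest admissible indicial root, which is exactly what the hypotheses on $\delta$ together with nonexceptionality are there to guarantee.

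Granting the isomorphism, both assertions follow. For the manifold statement, $dP_{g_0}$ is surjective because its image already contains $\Delta_{g_0}(H^{k+1}_{\delta+1})=H^{k-1}_{\delta-1}$, and setting $X:=\Delta_{g_0}^{-1}(dP_{g_0}(h))$ exhibits the splitting $h=\Li_X g_0+(h-\Li_X g_0)$ with $h-\Li_X g_0\in\ker dP_{g_0}$; hence $\ker dP_{g_0}$ is complemented, $0$ is a regular value, and $\mathcal{G}^k_\delta=P^{-1}(0)$ is a smooth submanifold near $g_0$ by the implicit function theorem. For the gauging diffeomorphism I would apply the implicit function theorem to $F(X,g):=V((\varphi_X)^*g,g_0)$, where $\varphi_X$ is the time-one flow of $X\in H^{k+1}_{\delta+1}(TM)$ and $k>n/2+1$ ensures that pullback by such a diffeomorphism maps $H^k_\delta$-metrics to $H^k_\delta$-metrics smoothly. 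Since $F(0,g_0)=0$ and $\partial_X F(0,g_0)=\Delta_{g_0}$ is the isomorphism above, for every $g$ in a small enough $H^k_\delta$-neighbourhood there is a unique small $X$, hence a unique $\varphi=\varphi_X$ that is $H^{k+1}_{\delta+1}$-close to the identity, with $V(\varphi^*g,g_0)=0$, i.e. $\varphi^*g\in\mathcal{G}^k_\delta$.

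The main obstacle is the second step: establishing surjectivity, that is the triviality of the cokernel, of the weighted Laplacian in exactly the weight range dictated by the hypotheses, together with the manifold-of-maps technicalities needed to make $(X,g)\mapsto(\varphi_X)^*g$ a smooth map between the weighted Hilbert manifolds with no loss of derivatives.
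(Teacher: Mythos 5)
Your proposal is correct and follows essentially the same route as the paper: linearize the gauge map to the Bianchi operator, use the identity $F\circ\Li(g_0)=\Delta_{g_0}+\Ric(g_0)(\cdot)=\Delta_{g_0}$, deduce surjectivity and the splitting $H^k_{\delta}=\ker F\oplus\Li(g_0)(H^{k+1}_{\delta+1}(TM))$ from the isomorphism property of the weighted Laplacian, and conclude by the implicit function theorem (the paper phrases the last step via the local diffeomorphism $(g,\varphi)\mapsto\varphi^*g$ rather than your $F(X,g)=V(\varphi_X^*g,g_0)$, which is equivalent). You additionally sketch a proof of the isomorphism $\Delta_{g_0}:H^{k+1}_{\delta+1}\to H^{k-1}_{\delta-1}$ (Bochner plus maximum principle for injectivity, duality for surjectivity), which the paper simply asserts, and you correctly identify the weight bookkeeping as the point requiring care.
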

\begin{proof}Let $\mathcal{U}$ be a $H^{k}_{\delta}$-neighbourhood of $g_0$ in the space of metrics such that the map $V:H^k_{\delta}(S^2T^*M)\supset\mathcal{U}\to H^{k-1}_{\delta-1}(TM)$, given by $V(g)^l=V(g,g_0)^l=g^{ij}(\Gamma(g)_{ij}^l-\Gamma(g_0)_{ij}^l)$ is well-defined.
	Linearization  at $g_0$ yields the map $F:H^k_{\delta}(S^2T^*M)\to H^{k-1}_{\delta-1}(TM)$, defined by $F(h)=(\mathrm{div}_{g_0}h)^{\sharp}-\frac{1}{2}\nabla^{g_0}\mathrm{tr}_{g_0}h$. To prove the theorem, it suffices to prove that $F$ is surjective and that the decomposition
	\begin{align*}
	H^k_{\delta}(S^2T^*M)=\ker F\oplus \Li(g_0)(H^{k+1}_{\delta+1}(TM))
	\end{align*}
	holds (here, $\Li_X(g_0)$ denotes the Lie-Derivative of $g_0$ along $X$).
	In fact, a calculation shows that $F\circ \Li(g_0)= \Delta_{g_0}+\Ric(g_0)(\cdot)=\Delta_{g_0}$ since $g_0$ is Ricci-flat. Since the map $\Delta_{g_0}:H^{k+1}_{\delta+1}(\Lambda^1M)\to  H^{k-1}_{\delta-1}(\Lambda^1M)$ is an isomorphism, it follows that $F$ is surjective and $\ker F\cap \Li(g_0)(H^{k+1}_{\delta+1}(TM))=\left\{0\right\}$.
	To show that
	\begin{align*}
	H^k_{\delta}(S^2T^*M)\subset \ker_{L^2}(F)\oplus \Li(g_0)(H^{k+1}_{\delta+1}(TM)),
	\end{align*}  
	let $h\in  H^k_{\delta}(S^2T^*M)$ and $X\in H^{k+1}_{\delta+1}(TM)$ the unique solution
	of $F(h)=\Delta_{g_0} X=F( \Li_X(g_0))$. Then, $h=(h-\Li_X(g_0))+\Li_X(g_0)$ is the desired decomposition.
	By surjectivity of $F$, $\mathcal{G}_{\delta}^k$ is a manifold. The second assertion follows because the map
	\begin{align*}
	\Phi:  \mathcal{G}_{\delta}^k\times H^{k+1}_{\delta+1}(\mathrm{Diff}(M))\to \mathcal{M}_{\delta}^k=:\mathcal{M}\cap H^k_{\delta}(S^2T^*M),\qquad
	(g,\varphi)\mapsto \varphi^*g
	\end{align*}
	is a local diffeomorphism around $g_0$ due to the implicit function theorem and the above decomposition.
\end{proof}
\begin{rk}The construction in Proposition \ref{slice} is similar to the slice provided by Ebin's slice theorem \cite{Ebi-Slice} in the compact case. The set $\mathcal{F}$ is similar to the local premoduli space of Einstein metrics defined in \cite[Definition 2.8]{Koiso-Cx-Str}. In contrast to the compact case, the elements in $\mathcal{F}$ close to $g_0$ can all be homothetic. In fact, this holds for the Eguchi-Hanson metric, see \cite{Page-K3}. More generally, any four-dimensional ALE hyperk\"{a}hler manifold $(M,g)$ admits a three-dimensional subspace of homothetic metrics in $\mathcal{F}$: see \cite[p. 52--53]{Via-Lect-IAS}.
\end{rk}
\subsection{ ALE Ricci-flat K\"{a}hler  spaces}\label{Ricci-Flat-ALE-Kahler}
\begin{lemma}[$\partial\bar{\partial}$-Lemma for ALE manifolds]
	Let $(M,g,J)$ be an ALE K\"{a}hler  manifold, $\delta+2\leq-n/2+2$ nonexceptional, $k\geq 1$ and $\alpha\in H^k_{\delta}(\Lambda^{p,q}M)$. Suppose that
	\begin{itemize}
		\item $\alpha=\partial\beta$ for some $\beta\in H^{k+1}_{\delta+1}(\Lambda^{p-1,q}M)$ and $\bar{\partial}\alpha=0$ or
		\item $\alpha=\bar{\partial}\beta$ for some $\beta\in H^{k+1}_{\delta+1}(\Lambda^{p,q-1}M)$ and ${\partial}\alpha=0$.
	\end{itemize}
	Then there exists a form $\gamma\in H^{k+2}_{\delta+2}(\Lambda^{p-1,q-1}M)$ such that $\alpha=\partial\bar{\partial}\gamma$. Moreover, we can choose $\gamma$ to satisfy the estimate $\left\|\gamma\right\|_{H^{k+2}_{\delta+2}}\leq C\cdot \left\|\alpha\right\|_{H^{k}_{\delta}}$, for some $C>0$.			
\end{lemma}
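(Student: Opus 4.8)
The plan is to transplant the classical Hodge-theoretic proof of the $\partial\bar\partial$-lemma from the compact K\"ahler case to the ALE setting, replacing ordinary Hodge theory by the weighted Fredholm theory for the Laplacian recalled in Section~\ref{def-ALE}. First I would assemble the analytic package. Since $\delta\le -n/2$ lands in (or below) the unweighted $L^2$ threshold in the present conventions, $H^k_\delta\hookrightarrow L^2$, and since $\delta$ is nonexceptional the complex Laplacian $\Delta_{\bar\partial}=\bar\partial\bar\partial^*+\bar\partial^*\bar\partial$ on $(p,q)$-forms is Fredholm as a map $H^{k+2}_{\delta+2}\to H^k_\delta$. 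This furnishes a bounded Green's operator $G\colon H^k_\delta\to H^{k+2}_{\delta+2}$ and a finite-rank projection $\mathcal H$ onto $\ker_{L^2}\Delta_{\bar\partial}$ with $\mathrm{Id}=\mathcal H+\Delta_{\bar\partial}G$ on $H^k_\delta$. The decisive structural input is that the K\"ahler identities are purely local and therefore hold verbatim on an ALE K\"ahler manifold: in particular $\Delta_{\bar\partial}=\Delta_\partial=\tfrac12\Delta_d$, so $\partial,\bar\partial,\partial^*,\bar\partial^*$ all commute with $\Delta_{\bar\partial}$ (hence with $G$), the three harmonic projections coincide, and $\{\partial,\bar\partial^*\}=\{\bar\partial,\partial^*\}=0$.

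With this in hand I treat the first case $\alpha=\partial\beta$, $\bar\partial\alpha=0$ (the second is identical after exchanging $\partial\leftrightarrow\bar\partial$). Applying the decomposition to $\alpha$, the term $\bar\partial^*\bar\partial G\alpha=\bar\partial^*G\bar\partial\alpha$ vanishes because $\bar\partial\alpha=0$, and $\mathcal H\alpha=0$ because $\alpha$ is $\partial$-exact while harmonic forms are $\partial^*$-closed; hence $\alpha=\bar\partial\eta$ with $\eta:=\bar\partial^*G\alpha\in H^{k+1}_{\delta+1}$ and $\|\eta\|_{H^{k+1}_{\delta+1}}\le C\|\alpha\|_{H^k_\delta}$. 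The key observation is that $\eta$ is $\partial$-closed: using $\{\partial,\bar\partial^*\}=0$, $[\partial,G]=0$ and $\partial\alpha=\partial\partial\beta=0$ one gets $\partial\eta=\partial\bar\partial^*G\alpha=-\bar\partial^*G\partial\alpha=0$. Feeding $\eta$ into the \emph{same} decomposition, the term $\partial^*\partial G\eta=\partial^*G\partial\eta$ now vanishes and $\mathcal H\eta=0$ since $\eta$ is $\bar\partial^*$-exact, so $\eta=\partial\partial^*G\eta=\partial\zeta$ with $\zeta:=\partial^*G\eta$. Finally $\alpha=\bar\partial\eta=\bar\partial\partial\zeta=-\partial\bar\partial\zeta$, so that $\gamma:=-\zeta=-\partial^*G\bar\partial^*G\alpha$ solves $\partial\bar\partial\gamma=\alpha$; the desired estimate $\|\gamma\|_{H^{k+2}_{\delta+2}}\le C\|\alpha\|_{H^k_\delta}$ then follows at once because $G$, $\partial^*$ and $\bar\partial^*$ are bounded between the relevant weighted spaces. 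Observe that writing $\gamma$ directly in terms of $\alpha$ (rather than $\beta$) is exactly what makes the estimate come out in the stated form, and it is forced by exploiting both hypotheses: $\bar\partial\alpha=0$ and the $\partial$-exactness (which yields $\partial\alpha=0$ and $\mathcal H\alpha=0$).

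The main obstacle is the analytic foundation rather than the algebra. I would have to justify carefully that at the weight $\delta\le -n/2$ the identity $\mathrm{Id}=\mathcal H+\Delta_{\bar\partial}G$ holds with $\mathcal H$ the $L^2$-harmonic projection, and that all the integrations by parts used to conclude $\mathcal H\alpha=0$, $\mathcal H\eta=0$ and $\partial\eta=0$ produce no boundary contribution at infinity; this rests on the sharp decay of $L^2$-harmonic forms on ALE spaces (which decay like $\textit{O}(r^{-n})$ or faster) together with the decay built into the weighted spaces, and is precisely where the hypothesis $\delta\le-n/2$ enters. A secondary technical point is that the construction applies $G$ at the two weights $\delta$ and $\delta+1$: if $\delta+1$ happens to be exceptional one either shifts to a nearby nonexceptional weight, the improved decay of $\eta$ leaving room to do so, or, more cleanly, performs a single elliptic solve with the fourth-order Bott--Chern Laplacian, whose weighted Fredholm theory is again supplied by the results quoted in Section~\ref{def-ALE}. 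Once these points are settled the proof is complete.
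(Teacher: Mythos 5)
Your proposal is correct and follows essentially the same route as the paper: the weighted Fredholm theory at the nonexceptional weight $\delta$ gives the Green's operator $G:H^k_{\delta}\to H^{k+2}_{\delta+2}$ and the harmonic decomposition, the K\"ahler identities give $\Delta_{\partial}=\Delta_{\bar\partial}$ and the commutation of $G$ with $\partial,\bar\partial,\partial^*,\bar\partial^*$, and you arrive at the identical formula $\gamma=-\partial^*G\bar\partial^*G\alpha$. The only difference is that the paper delegates the algebraic verification to Lemma 5.50 of Ballmann's book, whereas you write it out (and you rightly flag the minor point about which weights $G$ is applied at, which the paper glosses over).
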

\begin{proof}This follows along the lines of  Lemma 5.50 in \cite{Ballmann-Book}, except that we have to replace the standard Sobolev spaces by weighted ones to capture the ALE condition.
	Let $d=\partial$ or $d=\bar{\partial}$ and $\Delta=\Delta_{\partial}=\Delta_{\bar{\partial}}$
	Consider $\Delta$ as an operator $\Delta:H^{k+2}_{\delta+2}(\Lambda^{*}M)\to H^{k}_{\delta}(\Lambda^{*}M)$.
	Because of the assumption on $\delta$, it is Fredholm and we have the $L^2$-orthogonal decomposition
	\begin{align*}
	H^k_{\delta}(\Lambda^{*}M)=\ker_{L^2}(\Delta)\oplus \Delta(H^{k+2}_{\delta+2}(\Lambda^{*}M)),
	\end{align*}
	for $\delta\in [-n+1,-n/2)$, \cite[Theorem 8.4.1]{Joy-Hol-Boo}. Moreover, Fredholm properties of elliptic operators on ALE manifolds (see e.g. \cite[Section 10]{Pac-T}) imply that $\Delta$ is a Fredholm map of index zero on these spaces.
Thus, we can define the Green's operator $G$ which is zero on $\ker_{L^2}(\Delta)$ and the inverse of $\Delta$ on $\ker_{L^2}(\Delta)^{\perp}$.
	This defines a continuous linear operator $G:H^{k}_{\delta}(\Lambda^{*}M)\to H^{k+2}_{\delta+2}(\Lambda^{*}M)$.
	By Hodge theory and because $d+d^* :H^{k+1}_{\delta+1}(\Lambda^{*}M)\to H^{k}_{\delta}(\Lambda^{*}M)$ is also Fredholm,
	\begin{align*}
	d( H^{k+1}_{\delta+1}(\Lambda^{*}M))\oplus d^{*}(H^{k+1}_{\delta+1}(\Lambda^{*}M))=\Delta(H^{k+2}_{\delta+2}(\Lambda^{*}M)).
	\end{align*}
	and it is straightforward to see that $G$ is self-adjoint and commutes with $d$ and $d^*$.
	As in Ballmann's book, one shows that $\gamma=-G\partial^*\bar{\partial}^*G\alpha$ does the job in both cases. The estimate on $\gamma$ follows from construction. For $\alpha\in H^k_{\delta}(\Lambda^{p,q}M)$ with $\delta\leq -n+1$, we get $\gamma\in H^{k+2}_{\delta'+2}(\Lambda^{p,q}M)$ for any $\delta'> -n+1$. But as in \cite[Theorem 8.4.4]{Joy-Hol-Boo}, the fact that $\alpha=\partial\bar{\partial}\gamma$ allows us to deduce $\gamma\in H^{k+2}_{\delta+2}(\Lambda^{p,q}M)$.
\end{proof}
Let $(M,g,J)$ be a K\"{a}hler manifold. An infinitesimal complex deformation is an endomorphism $I:TM\to TM$ that anticommutes with $J$ and satisfies $\bar{\partial}I=0$ and $\bar{\partial}^*I=0$. By the relation $IJ+JI=0$, $I$ can be viewed as a section of $\Lambda^{0,1}M\otimes T^{1,0}M$.

\begin{theo} Let $(M^n,g,J)$ be an ALE K\"{a}hler manifold with a holomorphic volume form, $k>n/2+1$, $\delta\leq-n/2$ nonexceptional and $I\in H^k_{\delta}(\Lambda^{0,1}M\otimes T^{1,0}M)$ such that $\bar{\partial}I=0$ and $\bar{\partial}^*I=0$. Then there exists a smooth family of complex structures $J(t)$ with $J(0)=J$ such that $J(t)-J\in H^k_{\delta}(T^*M\otimes TM)$ and $J'(0)=I$.
\end{theo}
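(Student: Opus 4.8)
The plan is to realize $I$ as the linear term of a convergent power series $\Phi(t)=tI+\sum_{j\geq 2}t^j\Phi_j$ of Beltrami differentials $\Phi(t)\in H^k_{\delta}(\Lambda^{0,1}M\otimes T^{1,0}M)$ solving the Maurer--Cartan (integrability) equation $\bar\partial\Phi=\tfrac12[\Phi,\Phi]$, where $[\cdot,\cdot]$ is the bracket on $T^{1,0}$-valued forms. Once such a $\Phi(t)$ is found, its graph determines an almost complex structure $J(t)$, and the Maurer--Cartan equation is exactly the Newlander--Nirenberg integrability of $J(t)$; since $k>n/2+1$, the assignment $\Phi\mapsto J$ is a real-analytic algebraic map valued in $H^k_{\delta}(T^*M\otimes TM)$ with $J(0)=J$, $J'(0)=I$ and $J(t)-J\in H^k_{\delta}$. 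Thus everything reduces to solving the Maurer--Cartan equation inside the fixed weighted space $H^k_{\delta}$.

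I would build $\Phi$ by the standard recursion. Set $\Phi_1=I$ and, writing $R_j=\tfrac12\sum_{a+b=j}[\Phi_a,\Phi_b]$, solve $\bar\partial\Phi_j=R_j$ order by order in the Hodge gauge $\bar\partial^*\Phi_j=0$, i.e. $\Phi_j=\bar\partial^*GR_j$, where $G$ is the Green operator of $\Delta_{\bar\partial}$ supplied by the weighted Hodge decomposition established in the proof of the $\partial\bar\partial$-Lemma. Two facts must be verified at each order. First, $R_j$ is $\bar\partial$-closed: this is the usual consequence of the Jacobi identity combined with the lower-order equations $\bar\partial\Phi_a=R_a$. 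Second, and this is the heart of the matter, $R_j$ must be $\bar\partial$-exact, so that $\Phi_j=\bar\partial^*GR_j$ genuinely solves $\bar\partial\Phi_j=R_j$ (equivalently, the harmonic part of $R_j$ vanishes).

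This is where the holomorphic volume form $\Omega$ enters. Contraction $\varphi\mapsto\varphi\lrcorner\Omega$ is a bundle isomorphism $\Lambda^{0,q}M\otimes T^{1,0}M\to\Lambda^{n-1,q}M$ intertwining $\bar\partial$, since $\bar\partial\Omega=0$. By the Tian--Todorov lemma, if $\Phi_a\lrcorner\Omega$ and $\Phi_b\lrcorner\Omega$ are $\partial$-closed, then $[\Phi_a,\Phi_b]\lrcorner\Omega=\pm\partial\big(\Phi_a\lrcorner(\Phi_b\lrcorner\Omega)\big)$ is $\partial$-exact. Maintaining inductively that each $\Phi_i\lrcorner\Omega$ is $\partial$-closed, one gets that $R_j\lrcorner\Omega$ is $\partial$-exact and (from $\bar\partial$-closedness of $R_j$) also $\bar\partial$-closed, with $\beta=\Phi_a\lrcorner(\Phi_b\lrcorner\Omega)$ lying in the appropriate weighted space. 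The ALE $\partial\bar\partial$-Lemma proved above then gives $R_j\lrcorner\Omega=\partial\bar\partial\gamma_j=\bar\partial(-\partial\gamma_j)$, so $R_j\lrcorner\Omega$ is $\bar\partial$-exact; transporting back through the isomorphism shows $R_j$ is $\bar\partial$-exact and has vanishing harmonic projection. Solving in the contracted Hodge gauge also keeps $\Phi_j\lrcorner\Omega$ $\partial$-closed, which closes the induction. (Equivalently, one may set up the single fixed-point equation $\Phi=I+\tfrac12\bar\partial^*G[\Phi,\Phi]$ and show that the error $E=\bar\partial\Phi-\tfrac12[\Phi,\Phi]$ satisfies $E=-\bar\partial^*G[E,\Phi]$, hence vanishes order by order in $t$.)

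It remains to prove convergence of the series in $H^k_{\delta}$ for small $t$, and this I expect to be the principal technical obstacle, the difficulty being a careful bookkeeping of weights. The quadratic terms decay at rate $\sim 2\delta$, which is faster than $\delta$ since $\delta\leq-n/2<0$, so one exploits the continuous embedding $H^k_{2\delta}\hookrightarrow H^k_{\delta}$ (as $\rho\geq 1$) together with the multiplication estimate $\|[u,v]\|_{H^{k-1}_{\delta-1}}\leq C\|u\|_{H^k_{\delta}}\|v\|_{H^k_{\delta}}$, valid because $H^k_{\delta}$ is a multiplication algebra for $k>n/2+1$, and with the mapping property $\bar\partial^*G:H^{k-1}_{\delta-1}\to H^k_{\delta}$ coming from the weighted elliptic theory, for which I would choose $\delta$ (and the nearby weights $\delta\pm1$, $2\delta-1$) nonexceptional so that $G$ is bounded and the $\partial\bar\partial$-Lemma applies. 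With these estimates in hand, a majorant-series argument in the style of Kuranishi bounds $\|\Phi_j\|_{H^k_{\delta}}$ by the coefficients of an explicit convergent power series, producing a solution $\Phi(t)$ that is real-analytic in $t$ with values in $H^k_{\delta}$ for $|t|$ small; the resulting family $J(t)$ then has all the required properties.
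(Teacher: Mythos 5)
Your proposal is correct and follows essentially the same route as the paper: Tian's power-series solution of the Maurer--Cartan equation, the identification $\Lambda^{0,1}M\otimes T^{1,0}M\cong\Lambda^{n-1,1}M$ via the holomorphic volume form together with the Tian--Todorov lemma to see that the quadratic terms are $\partial$-exact and $\bar{\partial}$-closed, the weighted ALE $\partial\bar{\partial}$-lemma to solve order by order, and a majorant argument based on the multiplication estimate $\|[u,v]\|_{H^{k-1}_{\delta-1}}\leq C\|u\|_{H^k_{\delta}}\|v\|_{H^k_{\delta}}$ for convergence. The only cosmetic difference is your gauge choice $\Phi_j=\bar{\partial}^*GR_j$ where the paper takes $I_N=\partial\psi$ with $\partial\bar{\partial}\psi$ equal to the quadratic term (which makes $\partial$-closedness of the coefficients automatic); both work.
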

\begin{proof}
	The proof follows along the lines of Tian's proof by the power series approach \cite{Tian-Smoothness}:
	We write $J(t)=J(1-I(t))(1+I(t))^{-1}$, where $I(t)\in  H^k_{\delta}(\Lambda^{0,1}M\otimes T^{1,0}M)$
	and $I(t)$ has to solve the equation
	\begin{align*}
	\bar{\partial}I(t)+\frac{1}{2}[I(t),I(t)]=0,
	\end{align*}
	where $[.,.]$ denotes the Fr\"{o}licher-Nijenhuis bracket. If we write $I(t)$ as a formal power series
	$I(t)=\sum_{k\geq 1} I_kt^k$, the coefficients have to solve the equation
	\begin{align*}
	\bar{\partial} I_N+\frac{1}{2}\sum_{k=1}^{N-1}[I_k,I_{N-k}]=0,
	\end{align*}
	inductively for all $N\geq 2$. 
	As $\Lambda^{n,0}M$ is trivial,
	there is a natural identification of the bundles $\Lambda^{0,1}M\otimes T^{1,0}M=\Lambda^{n-1,1}M$ by using the holomorphic volume form and we now think of the $I_k$ as being $(n-1,1)$-forms.
	Initially, we have chosen
	$I_1\in H^k_{\delta}(\Lambda^{0,1}M\otimes T^{1,0}M)$, given by $I=2I_1J$. 
	By the multiplication property of weighted Sobolev spaces  \cite[p. 538]{Cho-Bru-Boo}, $[I_1,I_1]\in H^{k-1}_{\delta-1}(\Lambda^{n-1,2}M)$. Using $\partial I_1=0$ and $\bar{\partial}^*I_1=0$, one can now show that $\bar{\partial}[I_1,I_1]=0$ and $[I_1,I_1]$ is $\partial$-exact. The $\partial\bar{\partial}$-lemma now implies the existence of a $\psi\in  H^{k+1}_{\delta+1}(\Lambda^{n-2,1}M)$ such that $$\partial\bar{\partial}\psi=-\frac{1}{2}[I_1,I_1],$$ and so, $I_2=\partial \psi\in  H^{k}_{\delta}(\Lambda^{n-1,1}M)$ does the job.
	Inductively, we get a solution of the equation
	\begin{align*}
	\partial\bar{\partial}\psi=\frac{1}{2}\sum_{k=1}^{N-1}[I_k,I_{N-k}],
	\end{align*}
	by the $\partial\bar{\partial}$-lemma since the right hand side is $\bar{\partial}$-closed and $\partial$-exact (which in turn is true because $\partial I_k=0$ for $1\leq k\leq N-1$).
	Now we can choose $I_N=\partial\psi\in   H^{k}_{\delta}(\Lambda^{n-1,1}M)$.
	
	Let us prove the convergence of the above series: Let $D_1$ be the constant in the estimate of the $\partial\bar{\partial}$-lemma and $D_2$ be the constant such that
	\begin{align*}
	\left\|[\phi,\psi]\right\|_{H^{k-1}_{\delta-1}}\leq D_2\left\|\phi\right\|_{H^k_{\delta}}\left\|\psi\right\|_{H^k_{\delta}}.
	\end{align*}
	Then one can easily show by induction that 
	\begin{align*}
	\left\|I_N\right\|_{H^k_{\delta}}\leq C(N)\cdot[\frac{1}{2}D_1\cdot D_2]^{N-1}(\left\|I_1\right\|_{H^{k}_{\delta}})^N
	\end{align*}
	for $N\geq1$,
	where $C(N)$ is the sequence defined by $C(1)=1$ and $C(N)=\sum_{i=1}^{N-1}C(i)\cdot C(N-i)$ for $N>1$. By defining $D:=2/(D_1\cdot D_2)$ and $s=\frac{1}{2}D_1\cdot D_2\cdot\left\|I_1\right\|_{H^{k}_{\delta}}\cdot t$, we get
	\begin{align*}
	\left\|I(t)\right\|_{H^k_{\delta}}\leq \sum_{i=1}^{\infty}\left\|I_i\right\|_{H^k_{\delta}} t^i\leq D\cdot\sum_{i=1}^{\infty}C(i)\cdot s^i=D\cdot \left (\frac{1}{2}-\sqrt{\frac{1}{4}-s}\right),
	\end{align*} 
	if $s<1/4$ which shows that the series converges. Thus $I(t)\in H^{k}_{\delta}(\Lambda^{n-1,1}M)$ and $J(t)-J=-2JI(t)(1+I(t))^{-1}\in H^{k}_{\delta}(\Lambda^{n-1,1}M)\cong H^{k}_{\delta}(\Lambda^{0,1}M\otimes T^{1,0}M)$.
\end{proof}
The proof of the above theorem provides an analytic immersion $\Theta:H^{k}_{\delta}(\Lambda^{0,1}M\otimes T^{1,0}M)\cap \ker_{L^2}(\Delta)\supset U\to H^{k}_{\delta}(T^*M\otimes TM)$ whose image is a smooth manifold of complex structures which we denote by $\mathcal{J}^k_{\delta}$ and whose tangent map at $J$ is just the injection.
\begin{prop}\label{kahler-def}Let $(M,g_0,J_0)$ be an ALE Calabi-Yau manifold, $\delta<2-n$ nonexceptional and
	$\mathcal{J}^k_{\delta}$ be as above. Then there exists a $H^{k}_{\delta}$-neighbourhood $\mathcal{U}$ of $J$ and a smooth map $\Phi:\mathcal{J}^k_{\delta}\cap \mathcal{U}\to\mathcal{M}^k_{\delta}$ which associates to each $J\in \mathcal{J}^k_{\delta}\cap \mathcal{U}$ sufficiently close to $J_0$ a metric $g(J)$ which is $H^k_{\delta}$-close to $g_0$ and K\"{a}hler with respect to $J$. Moreover, we can choose the map $\Phi$ such that $$d\Phi_{J_0}(I)(X,Y)=\frac{1}{2}(g_0(IX,J_0Y)+g_0(J_0X,IY)).$$
\end{prop}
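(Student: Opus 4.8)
The plan is to reduce the construction of a Kähler metric for a nearby complex structure $J$ to producing a \emph{closed, positive} $(1,1)$-form with respect to $J$ that is $H^k_{\delta}$-close to the fixed Kähler form $\omega_0=g_0(J_0\cdot,\cdot)$, and then to set $g(J)(\cdot,\cdot):=\omega_J(\cdot,J\cdot)$. The natural first guess, which already exhibits the correct first-order behaviour, is the $J$-symmetrization $\hat g_J:=\frac12(g_0+g_0(J\cdot,J\cdot))$: it is $J$-Hermitian, real-analytic in $J$, equal to $g_0$ at $J_0$, and a direct differentiation gives $\frac{d}{dt}|_{t=0}\hat g_{J(t)}(X,Y)=\frac12(g_0(IX,J_0Y)+g_0(J_0X,IY))$, which is exactly the asserted expression for $d\Phi_{J_0}(I)$. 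The whole point is therefore to correct $\hat g_J$ (equivalently $\omega_0$) into a genuinely Kähler metric by a term controlled in $H^k_{\delta}$ that does not disturb this first variation.

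For the correction I would work with the fixed closed form $\omega_0$ rather than with the fundamental form of $\hat g_J$, since $\omega_0$ is already $d$-closed and only fails to be of type $(1,1)$ with respect to $J$. Decompose $\omega_0=\omega_0^{2,0}+\omega_0^{1,1}+\omega_0^{0,2}$ into its $J$-types; since $J-J_0\in H^k_{\delta}$, the off-diagonal piece $\alpha_J:=\omega_0^{2,0}$ lies in a weighted space, vanishes at $J_0$, and the $(3,0)$-component of $d\omega_0=0$ shows that $\partial_J\alpha_J=0$. I would then solve $\partial_J\xi=\alpha_J$ for a $(1,0)$-form $\xi=\xi(J)\in H^{k+1}_{\delta+1}$ and set $\omega_J:=\omega_0-2\,\mathrm{Re}(d\xi)$. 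By construction $\omega_J$ is $d$-closed, and its $(2,0)$- and (by reality) $(0,2)$-parts cancel $\alpha_J$ and $\bar\alpha_J$, so $\omega_J$ is a closed $(1,1)_J$-form that is $H^k_{\delta}$-close to $\omega_0$. Positivity, hence the fact that $g(J):=\omega_J(\cdot,J\cdot)$ is a genuine Kähler metric for $J$, follows for $J$ in a small enough neighbourhood by continuity, since $\omega_{J_0}=\omega_0$ is positive.

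The solvability of $\partial_J\xi=\alpha_J$ with uniform estimates is the crux, and this is exactly where the hypothesis $\delta<2-n$ enters. The form $\alpha_J$ is $\partial_J$-closed of type $(2,0)$, so by weighted Hodge theory on the Calabi-Yau manifold, together with the ALE $\partial\bar\partial$-lemma established above, it is $\partial_J$-exact provided the corresponding obstruction, a space of decaying $\partial_J$-harmonic $(2,0)$-forms, vanishes in the weight range considered; the choice $\delta<2-n$ forces the relevant weighted cohomology to be trivial, so no obstruction survives and $\xi(J)$ can be chosen real-analytic in $J$ with $\|\xi(J)\|_{H^{k+1}_{\delta+1}}\leq C\|\alpha_J\|_{H^k_{\delta}}$. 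Feeding these estimates into an inverse/implicit function theorem argument shows that $J\mapsto g(J)$ is a well-defined smooth (indeed real-analytic) map into $\mathcal{M}^k_{\delta}$ with values that are $H^k_{\delta}$-close to $g_0$ and Kähler for $J$.

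It remains to identify the differential. Since $\alpha_J$, and hence $\xi(J)$, is of first order in $I$, the correction $d\xi$ does contribute to $d\Phi_{J_0}$: one computes $\frac{d}{dt}|_{t=0}g(J(t))(X,Y)=g_0(J_0X,IY)-\frac{d}{dt}|_{t=0}\bigl(2\,\mathrm{Re}\,d\xi\bigr)(X,J_0Y)$ and checks, using the linearized equation $\partial_{J_0}\dot\xi=\dot\alpha$, that the last term supplies precisely the missing antisymmetric piece $\frac12(g_0(IX,J_0Y)-g_0(J_0X,IY))$, so that the total reduces to the symmetrization formula $\frac12(g_0(IX,J_0Y)+g_0(J_0X,IY))$ announced in the statement. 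The main obstacle is thus twofold, and both parts are of the same analytic nature: establishing the uniform weighted solvability of the $\partial_J$-equation (vanishing of the obstruction for $\delta<2-n$), and verifying that the first variation of this solution is exactly the correction needed to turn $g_0(J_0X,IY)$ into its $g_0$-symmetric part.
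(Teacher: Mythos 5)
Your construction differs from the paper's: instead of the fourth--order Kodaira--Spencer operator $E_t$ and its Green's operator, you try to kill the $(2,0)_J$-- and $(0,2)_J$--components of $\omega_0$ directly by solving a first--order equation $\partial_J\xi=\alpha_J$ and subtracting the exact form $2\,\mathrm{Re}(d\xi)$. The idea of producing a closed $(1,1)_J$--form near $\omega_0$ is the right one, but as written the argument has a genuine gap at the step you describe as a ``check''. Writing $d\dot\xi=\partial_{J_0}\dot\xi+\bar{\partial}_{J_0}\dot\xi=\dot\alpha+\bar{\partial}_{J_0}\dot\xi$, the linearized equation only controls the $(2,0)+(0,2)$ part of $2\,\mathrm{Re}(d\dot\xi)$, namely $\dot\alpha+\overline{\dot\alpha}$, and this part does indeed contribute exactly the antisymmetric correction $\frac{1}{2}(g_0(IX,J_0Y)-g_0(J_0X,IY))$ you need. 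But the $(1,1)$ part $2\,\mathrm{Re}(\bar{\partial}_{J_0}\dot\xi)$ also enters $\frac{d}{dt}|_{t=0}\,\omega_{J_t}(X,J_tY)$, contributes an additional symmetric term $-2\,\mathrm{Re}(\bar{\partial}_{J_0}\dot\xi)(X,J_0Y)$, and is \emph{not} determined by $\partial_{J_0}\dot\xi=\dot\alpha$: it depends on which solution $\xi(J)$ your (unspecified) solution operator picks, and is generically nonzero. So the asserted formula for $d\Phi_{J_0}$ does not follow. This is precisely the issue the paper's proof handles by adding the further exact correction $\partial_t\bar{\partial}_tu_t$ with $u_t=tv$ and $\partial_0\bar{\partial}_0v=G_0(E_0'\omega_0)$, chosen so that the first--order contribution of the Green's--operator correction cancels; to do the same in your setting you would need to show that $2\,\mathrm{Re}(\bar{\partial}_{J_0}\dot\xi)$ lies in $\partial_0\bar{\partial}_0(H^{k+2}_{\delta+2}(M))$, which is exactly the $L^2$--orthogonal decomposition $\ker_{L^2}(d)\cap H^k_{\delta}(\Lambda^{1,1})=\partial\bar{\partial}(H^{k+2}_{\delta+2})\oplus\ker(E)$ that the Kodaira--Spencer operator is introduced to provide.

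There is a second, related gap: the solvability of $\partial_J\xi=\alpha_J$ in $H^{k+1}_{\delta+1}$ with uniform estimates and smooth dependence on $J$ is asserted (``the choice $\delta<2-n$ forces the relevant weighted cohomology to be trivial'') but not proved, and it is not a formal consequence of the $\partial\bar{\partial}$--lemma stated in the paper, which applies to forms that are already $\partial$-- or $\bar{\partial}$--exact. In the paper the hypothesis $\delta<2-n$ is used for something different, namely the integration by parts showing that elements of $\ker E_t$ near $\omega_0$ (which are bounded, not decaying) are closed; and the smooth dependence on $J$ of the correction is obtained from the constancy of $\dim\ker_{L^2}(E_t)\cap H^k_{\delta}(\Lambda^{1,1}_t M)$, following Kodaira--Spencer. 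Your proposal needs analogues of both: a proof that the weighted obstruction space for the $\partial_J$--equation on $(2,0)$--forms vanishes in the stated weight range, and a proof that your solution operator varies smoothly (which again reduces to constancy of the dimension of a $J$--dependent kernel). Until these two points and the cancellation of $2\,\mathrm{Re}(\bar{\partial}_{J_0}\dot\xi)$ are supplied, the proof is incomplete.
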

\begin{proof}
	We adapt the strategy of Kodaira and Spencer \cite[Section 6]{Kod-Spe-III}. Let $J_t$ be a family in $\mathcal{J}^k_{\delta}$ and define $J_t$-hermitian forms $\omega_t$ by
	$\Pi^{1,1}_{t}\omega_0(X,Y)=\frac{1}{2}(\omega_0(X,Y)+\omega_0(J_tX,J_tY))$. Let $\partial_t,\bar{\partial}_t$ the associated Dolbeaut operators and $\partial_t^*,\bar{\partial}_t^*$ their formal adjoints with respect to the metric $g_t(X,Y):=\omega_t(X,J_tY)$. We now define a forth-order linear differential operator $E_t:H^k_{\delta}(\Lambda^{p,q}_tM)\to H^{k-4}_{\delta-4}(\Lambda^{p,q}_tM)$ by
	\begin{align*}
	E_t=\partial_t\bar{\partial}_t\bar{\partial}^*_t\partial_t^*+\bar{\partial}^*_t\partial_t^*\partial_t\bar{\partial}_t+\bar{\partial}^*_t\partial_t\partial_t^*\bar{\partial}_t+\partial_t^*\bar{\partial}_t\bar{\partial}^*_t\partial_t
	+\bar{\partial}^*_t\bar{\partial}_t+\partial_t^*\partial_t.
	\end{align*}
	It is straightforward to see that $E_t$ is formally self-adjoint and strongly elliptic. Moreover,	$\alpha\in\ker_{H^k_{\delta}}(E_t)$ if and only if 
	$\partial_t\alpha=0$, $\bar{\partial}_t\alpha=0$ and $\bar{\partial}^*_t\partial_t^*\alpha=0$, i.e.\ $d\alpha=0$ and $\bar{\partial}^*_t\partial_t^*\alpha=0$ hold simultaneously. If $\delta$ is nonexceptional, $E_t$ is Fredholm which allows to define for each $t$ its Greens operator $G_t:H^{k-4}_{\delta-4}(\Lambda^{p,q}_tM)\to H^k_{\delta}(\Lambda^{p,q}_tM)$.
	As in \cite[Proposition 7]{Kod-Spe-III}, one now shows that
	\begin{align*}
	\ker_{L^2}(d)\cap H^k_{\delta}(\Lambda^{p,q}_tM)=\partial_t\bar{\partial}_t(H^{k+2}_{\delta+2}(\Lambda^{p-1,q-1}_tM))\oplus \ker_{L^2}(E_t)\cap H^k_{\delta}(\Lambda^{p,q}_tM),
	\end{align*}
	is an $L^2(g_t)$ orthogonal decomposition.
	The dimension of $\ker_{L^2}(E_t)\cap H^k_{\delta}(\Lambda^{1,1}_tM)$ is constant for small $t$ which implies that $G_t$ depends smoothly on $t$. The proof of this fact is exactly as in \cite[Proposition 8]{Kod-Spe-III}.
	
	Now observe that
	$E_t\omega_t\in H^{k-4}_{\delta-4}(\Lambda^{1,1}_tM)$ if $\omega_t$ and $J_t$ are $H^k_{\delta}$-close to $\omega_0$ and $J_0$, respectively. This allows us to define
	\begin{align*}
	\tilde{\omega}_t&:=\omega_t-G_tE_t\omega_t+\partial_t\bar{\partial}_tu_t
	=(1-G_tE_t)\Pi^{1,1}_t\omega_0+\partial_t\bar{\partial}_tu_t,
	\end{align*}
	where $u_t\in H^{k+2}_{\delta+2}(M)$ is a smooth family of functions such that $u_0=0$ which will be defined later. Clearly, $$\bar{\omega}_t:=\omega_t-G_tE_t\omega_t\in \ker E_t.$$ As $\bar{\omega}_t$ is $H^k_{\delta}$-close to $\omega_0$, $\nabla^{g_t}\bar{\omega}_t\in H^{k-1}_{\delta-1}(g_t)$, since $\omega_0$ is $g_0$-parallel. Therefore, $\nabla^{g_t}\bar{\omega}_t=\textit{O}(r^{-\alpha-1}) $ and $\nabla^{g_t,2}\bar{\omega_t}=\textit{O}(r^{-\alpha-2})$ for any $\alpha<-\delta$. Thus, if we choose the nonexceptional value $\delta$ so that $\delta<-n+2$, integration by parts implies that 
	\begin{align*}
	\left\|\partial_t \bar{\omega}_t\right\|^2_{L^2(g_t)}+
	\left\|\bar{\partial_t}\bar{\omega}_t \right\|^2_{L^2(g_t)}
	\leq (E_t\bar{\omega}_t,\bar{\omega}_t)_{L^2(g_t)}=0.
	\end{align*}
	Therefore, $\bar{\omega}_t$ and hence also $\tilde{\omega}_t$ is closed. 
	Differentiating at $t=0$ yields
	\begin{align*}
	\tilde{\omega}'_0=(1-G_0E_0)\omega'_0-G_0(E_0'\omega_0)+\partial_0\bar{\partial}_0u'_0=\omega'_0-G_0(E_0'\omega_0)+\partial_0\bar{\partial}_0u'_0
	\end{align*}
	Because $d\tilde{\omega}_t=0$, we have $d\tilde{\omega}'_0=0$ and since $J_0'$ is an infinitesimal complex deformation, $E_0\omega'_0=0$ and $d\omega'_0=0$ 
	which implies that  		 
	$$G_0(E_0'\omega_0)\in \ker_{L^2}(E_0)^{\perp}\cap\ker_{L^2}(d)\cap H^{k}_{\delta}(\Lambda^{1,1}_0M)=\partial_0\bar{\partial}_0(H^{k+2}_{\delta+2}(M)).$$ Let now $ v\in H^{k+2}_{\delta+2}(M) $ so that $\partial_0\bar{\partial}_0v=G_0(E_0'\omega_0).$
	Then, define $u_t\in H^{k+2}_{\delta+2}(M) $ by $$u_t:=tv.$$
	By this choice, $
	\tilde{\omega}_0'=\omega'_0$
	and the assertion for $d\Phi_{J_0}(J'_0)=\tilde{g}_0'$ follows immediately.
	Finally, $\tilde{g}_t(X,Y):=\tilde{\omega}_t(X,J_tY)$ is a Riemannian metric for $t$ small enough and it is K\"{a}hler with respect to $J_t$.

\end{proof}
\begin{rk}
	Let $J_t$ is a smooth family of complex structures in $\mathcal{J}^k_{\delta}\cap \mathcal{U}$ and $g_t=\Phi(J_t)$. Then the construction in the proof above shows that $I=J'_0$ and $h=g'_0$ are related by
	\begin{align*}
	h(JX,Y)=-\frac{1}{2}(g(X,IY)+g(IX,Y)).
	\end{align*}
\end{rk}
\noindent Before we state the next theorem, recall the notation $\mathcal{G}_{\delta}^k$ we used in Proposition \ref{slice}.
\begin{theo}\label{CY-ALE-Int}
	Let $(M^n,g_0,J_0)$ be an ALE Calabi-Yau manifold and $\delta\in (1-n,2-n)$ nonexceptional. Then for any $h\in\ker_{L^2}(L_{g_0})$, there exists a smooth family  $g(t)$ of Ricci-flat metrics in $\mathcal{G}^k_{\delta}$ with $g(0)=g_0$ and $g'_0=h$. Each metric $g(t)$ is ALE and K\"{a}hler with respect to some complex structure $J(t)$ which is $H^k_{\delta}$-close to $J_0$. In particular, $g_0$ is integrable.
\end{theo}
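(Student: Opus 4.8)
The plan is to realize every element of $\ker_{L^2}(L_{g_0})$ as the velocity of a genuine curve of Ricci-flat ALE K\"{a}hler metrics lying in the gauge slice $\mathcal{G}^k_{\delta}$, and then to extract integrability from Theorem \ref{analyticset}. The decisive simplification is this: once a curve $g(t)$ lies in $\mathcal{F}\cap\mathcal{G}^k_{\delta}$, it is gauged ($V(g(t),g_0)=0$) and Ricci-flat, hence contained in the analytic set of Theorem \ref{analyticset}, so its velocity automatically satisfies $g'(0)\in T_{g_0}\mathcal{Z}=\ker_{L^2}(L_{g_0})$. Consequently, if I only manage to arrange $\Pi_c(g'(0))=h$, then $g'(0)=\Pi_c(g'(0))=h$ for free. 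Thus it suffices to build, for each $h$, a smooth family of Ricci-flat ALE K\"{a}hler metrics whose velocity has $\Pi_c$-component equal to $h$, and then to put it in gauge via Proposition \ref{slice}; the gauge-fixing diffeomorphism alters the velocity only by a term $\mathcal{L}_X g_0$, which is $L^2$-orthogonal to the transverse-traceless kernel by Lemma \ref{tttensors} and hence invisible to $\Pi_c$. Feeding Ricci-flatness into Proposition \ref{flatsoliton} guarantees that the gauged limit genuinely sits in $\mathcal{F}$.

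First I would decompose the kernel. Since $J_0$ is $g_0$-parallel, $L_{g_0}$ commutes with the projections onto the $J_0$-Hermitian and $J_0$-anti-Hermitian parts of a symmetric $2$-tensor, so $h=h^++h^-$ with $h^\pm\in\ker_{L^2}(L_{g_0})$. The anti-Hermitian piece $h^-$ corresponds, via the holomorphic volume form and the identification $\Lambda^{0,1}M\otimes T^{1,0}M\cong\Lambda^{n-1,1}M$, to an endomorphism $I$ anticommuting with $J_0$, determined by $h^-(J_0X,Y)=-\tfrac12(g_0(X,IY)+g_0(IX,Y))$ as in the remark following Proposition \ref{kahler-def}; Lichnerowicz-harmonicity of $h^-$ translates into $\bar{\partial}I=0$ and $\bar{\partial}^*I=0$, i.e.\ $I$ is an infinitesimal complex deformation. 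The Hermitian piece $h^+$ corresponds to the primitive harmonic $(1,1)$-form $\omega^+:=h^+(J_0\cdot,\cdot)$.

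Next I would integrate the two deformation types. To integrate $h^-$ I apply the deformation theorem preceding Proposition \ref{kahler-def} to $I$, obtaining a smooth family of complex structures $J(t)$ with $J'(0)=I$, and then Proposition \ref{kahler-def} to produce K\"{a}hler metrics whose velocity, by the derivative formula $d\Phi_{J_0}(I)(X,Y)=\tfrac12(g_0(IX,J_0Y)+g_0(J_0X,IY))$ and the subsequent remark, is exactly $h^-$. To integrate $h^+$ I deform the K\"{a}hler class, replacing $\omega_0$ by $\omega_0+t\,\omega^+$, which remains K\"{a}hler for small $t$. Running both simultaneously in the direction $(I,\omega^+)$ yields a one-parameter family of K\"{a}hler structures $(J(t),\omega_t)$ whose canonical bundle stays holomorphically trivial (the holomorphic volume form persisting under the Calabi-Yau complex deformation).

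Finally, for each $t$ I would invoke the existence theory for Ricci-flat ALE K\"{a}hler metrics, the ALE analogue of Yau's solution of the Calabi conjecture, to select in the prescribed K\"{a}hler class (asymptotic to the same cone over $\mathbb{S}^{n-1}/\Gamma$) the unique Ricci-flat ALE K\"{a}hler metric $\tilde{g}(t)$, with smooth dependence on $t$ and $\tilde{g}(0)=g_0$. The Yau correction modifies the metric within its class by an $i\partial\bar{\partial}$-exact term, which is $L^2$-orthogonal to the harmonic kernel, so $\Pi_c(\tilde{g}'(0))=h^++h^-=h$. Gauge-fixing $\tilde{g}(t)$ into $\mathcal{G}^k_{\delta}$ via Proposition \ref{slice} gives the desired $g(t)\in\mathcal{F}\cap\mathcal{G}^k_{\delta}$, and the reduction of the first paragraph upgrades $\Pi_c(g'(0))=h$ to $g'(0)=h$. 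Letting $h$ range over $\ker_{L^2}(L_{g_0})$ produces curves in $\mathcal{F}$ tangent to all of $\ker_{L^2}(L_{g_0})$; comparing with Theorem \ref{analyticset} forces $\mathcal{U}\cap\mathcal{F}=\mathcal{Z}$, which is exactly integrability. I expect the principal obstacle to lie in this last analytic step: establishing ALE Calabi-Yau existence and uniqueness in the weighted spaces $H^k_{\delta}$, the smooth $t$-dependence of $\tilde{g}(t)$, and the verification that the potential-theoretic and gauge corrections truly leave the $\Pi_c$-component of the velocity equal to $h$. The complex-deformation input, by contrast, is already delivered by the theorems established above.
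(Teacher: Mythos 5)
Your proposal follows the paper's route almost step for step: the Hermitian/anti-Hermitian splitting of $h$, the identification of the anti-Hermitian part with an infinitesimal complex deformation $I$ integrated by the theorem preceding Proposition \ref{kahler-def}, the accompanying K\"ahler metrics from Proposition \ref{kahler-def}, the deformation of the K\"ahler class in the direction of the harmonic $(1,1)$-form, the gauge-fixing via Proposition \ref{slice}, and the final comparison with Theorem \ref{analyticset} are all exactly the paper's argument. The one substantive divergence is the Calabi step. You invoke the global ALE Calabi--Yau existence and uniqueness theorem as a black box and correctly flag smooth $t$-dependence as the principal difficulty; the paper instead solves the Calabi equation only \emph{locally}, by the implicit function theorem applied to the map $G(J,\kappa,u)=\mathrm{Cal}(\omega(J)+pr_J(\kappa),u)-f_{\omega(J)+pr_J(\kappa)}$ on weighted Sobolev spaces, whose differential in $u$ at the center is the weighted Laplacian $\Delta:H^{k}_{\delta}(M)\to H^{k-2}_{\delta-2}(M)$, an isomorphism for nonexceptional $\delta$. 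This local argument is the missing piece you should supply: it sidesteps the global theorem entirely and delivers existence, uniqueness near $\omega_0$, and smooth dependence on $(J,\kappa)$ in one stroke, which is all that is needed since only the germ of the family at $t=0$ matters. Two smaller remarks. First, your opening reduction --- that a curve in $\mathcal{F}\cap\mathcal{U}$ automatically has velocity in $T_{g_0}\mathcal{Z}=\ker_{L^2}(L_{g_0})$, so that $\Pi_c(g'(0))=h$ already forces $g'(0)=h$ --- is valid and slightly slicker than the paper's endgame, which instead uses the direct-sum decomposition of Proposition \ref{slice} to kill the Lie-derivative term; note also that a Ricci-flat metric with $V(g,g_0)=0$ lies in $\mathcal{F}$ by definition, so Proposition \ref{flatsoliton} is not needed for that inclusion. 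Second, deforming the class by $\omega_0+t\,\omega^+$ needs care because $\omega^+$ is $J_0$-$(1,1)$ but not $J(t)$-$(1,1)$; the paper inserts the projection $pr_{J(t)}(t\kappa)$ onto the $J(t)$-harmonic $(1,1)$-forms, justified by showing that the dimensions of the weighted Dolbeault cohomologies are locally constant in $J$. With the Calabi step replaced by the implicit-function-theorem argument, your outline becomes a complete proof.
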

\begin{proof}
	We proceed similarly as in \cite[Chapter 12]{Besse}, except the fact that we use weighted Sobolev spaces. Given a complex structure $J$ close to $J_0$ and a $J$-$(1,1)$-form $\omega$ which is $H^{k}_{\delta}$-close to $\omega_0$, we seek a Ricci-flat metric in the cohomology class $[\omega]\in \mathcal{H}^{1,1}_J(M)$. As the first Chern class vanishes, there exists a function $f_{\omega}\in H^{k}_{\delta}(M)$, such that $i\partial\bar{\partial}f_{\omega}$ is the Ricci form of $\omega$. If $\bar{\omega}\in [\omega]$ and $\bar{\omega}-\omega\in H^{k}_{\delta}(\Lambda^{1,1}_{J}M)$, the $\partial\bar{\partial}$-lemma implies that there is a $u\in H^{k+2}_{\delta+2}(M)$ such that $\bar{\omega}=\omega+i\cdot \partial\bar{\partial}u$. Ricci-flatness of $\bar{\omega}$ is now equivalent to the condition
	\begin{align*}
	f_{\omega}=\log\frac{(\omega+i\partial\bar{\partial}u)^n}{\omega^n}=:Cal(\omega,u).
	\end{align*}
	Let $\mathcal{J}^k_{\delta}$ be as above and $\Delta_J$ the Dolbeaut Laplacian of $J$ and the metric $g(J)$. Then all the $(L^2_{\delta})$-cohomologies $\mathcal{H}^{1,1}_{J,\delta}(M)=\ker_{L^2_{\delta}}(\Delta_J)\cap L^2_{\delta}(\Lambda^{1,1}M)$ are isomorphic for $J\in\mathcal{J}^k_{\delta}$ if we  $\mathcal{J}^k_{\delta}$ is small enough:
	We have $\mathcal{H}^2_{\delta}(M)=\mathcal{H}^{2,0}_{J,\delta}(M)\oplus \mathcal{H}^{1,1}_{J,\delta}(M)\oplus \mathcal{H}^{0,2}_{J,\delta}(M)$. The left hand side is independent of $J$ and the metric $g(J)$ is provided by Proposition \ref{kahler-def}. The spaces on the right hand side are kernels of $J$-dependent elliptic operators
	whose dimension depends upper-semicontinuously on $J$. However the sum of the dimensions is constant
	and so the dimensions must be constant as well.
	
	Thus, there is a natural projection $pr_{J}:\ker_{L^2}(\Delta_{J_0})\to\ker_{L^2}(\Delta_{J})$ which is an isomorphism. We now want to apply the implicit function theorem to the map
	\begin{align*}
	G: \mathcal{J}^k_{\delta}\times \mathcal{H}^{1,1}_{J_0,\delta}(M)\times H^{k+2}_{\delta+2}(M)&\to H^{k}_{\delta}(M)\\
	(J,\kappa,u)&\mapsto Cal(\omega(J)+pr_J(\kappa),u)-f_{\omega(J)+pr_J(\kappa)},
	\end{align*}
	where $\omega(J)(X,Y):=g(J)(JX,Y)$ and $g(J)$ is the metric constructed in Proposition \ref{kahler-def}.
	We have $G(J_0,0,0)=0$ and the differential restricted to the third component is just given by $\Delta:H^{k+2}_{\delta+2}(M)\to  H^{k}_{\delta}(M)$ (c.f. \cite[p.\ 328]{Besse}), which is an isomorphism. Therefore we find a map $\Psi$ such that $G(J,\kappa, \Psi(J,\kappa))=0$.
	
	Let now $h\in\ker_{L^2}(L_{g_0})$ and let $h=h_H+h_A$ its decomposition into a $J_0$-hermitian and a $J_0$-antihermitian part. We want to show that $h$ is tangent to a family of Ricci-flat metrics.
	We have seen in Theorem \ref{ell-reg-prop} together with Remark \ref{lin-decay} that $h\in H^k_{\delta}(S^2T^*M)$ for all $\delta>1-n$ and we can define $I\in H^k_{\delta}(T^*M\otimes TM)$ and $\kappa\in H^k_{\delta}(\Lambda^{1,1}_{J_0})(M)$ by
	\begin{align}\label{herm+antiherm}
	g(X,IY)=-h_A(X,J_0Y),\qquad \kappa(X,Y)=h_H(J_0X,Y).
	\end{align}
	It is easily seen that $I$ is a symmetric endomorphism satisfying $IJ_0+J_0I=0$ and thus can be viewed as 
	$I\in H^k_{\delta}(\Lambda^{0,1}M\otimes T^{1,0}M)$. Moreover, because $h_A$ is a $TT$-tensor, $\bar{\partial} I=0$ and $\bar{\partial}^*I=0$. In addition $\kappa\in \mathcal{H}^{1,1}_{J_0}(M)$. The proof of this facts is as in \cite{Koiso-Cx-Str}.
	Let $J(t)=\Theta(t\cdot I)$ be a family of complex structures tangent to $I$ and $\tilde{\omega}(t)=\tilde{\Phi}(J(t))$ be the associated family of K\"{a}hler forms.
	We consider the family $\omega(t)=\tilde{\omega}(t)+pr_{J(t)}(t\cdot \kappa)+i\partial\bar{\partial} \Psi(J(t),t\cdot\kappa)$ and the associated family of Ricci-flat metrics $\tilde{g}(t)(X,Y)=\omega(t)(X,J(t)Y)$. It is straightforward that $\tilde{g}'(0)=h$. By Proposition \ref{slice}, 
	there exist diffeomorphisms $\varphi_t$ with $\varphi_0=id$ such that $g(t)=\varphi_t^*\tilde{g}(t)\in \mathcal{G}^k_{\delta}$. We obtain $g'(0)=h+\Li_{X}g_0$ for some $X\in H^{k+1}_{\delta+1}(TM)$.
	Since $h$ is a TT-tensor due to Lemma \ref{tttensors}, $h\in T_{g_0}\mathcal{G}_{\delta}^{k}$. On the other hand, $g'(0)\in  T_{g_0}\mathcal{G}_{\delta}^{k}$ as well
	which implies that $g'(0)=h$ due to the decomposition in Proposition \ref{slice}.
	
	By Theorem \ref{analyticset}, the set of stationary solutions of the Ricci-DeTurck flow $\mathcal{F}$ close to $g_0$ is an analytic set contained in a finite-dimensional manifold $\mathcal{Z}$ with $T_{g_0}\mathcal{Z}=\ker_{L^2}(L_{g_0})$.
	The above construction provides a smooth map $\Xi:\ker_{L^2}(L_{g_0})\supset \mathcal{U}\to\mathcal{F}\subset\mathcal{Z}$ whose tangent map is the identity. Therefore, there exists a $L^2\cap L^{\infty}$-neighbourhood $\mathcal{U}$ of $g_0$ in the space of metrics such that $\mathcal{F}\cap\mathcal{U}=\mathcal{Z}\cap\mathcal{U}$.
\end{proof}
Let $h\in C^{\infty}(S^2T^*M)$ and $h_H,h_A$ its hermitian and anti-hermitian part, respectively.
The hermitian and anti-hermitian part are preserved by $L_{g_0}$. Let $I=I(h_A)$ and $\kappa=\kappa(h_H)$ be defined as in \eqref{herm+antiherm}. Then we have the relations
$I(L(h_A))=\Delta_C(I(h_A))$ and $\kappa(L(h_H) )=\Delta_H(\kappa(h_H))$, where $\Delta_C$ and $ \Delta_H$ are the complex Laplacian and the Hodge Laplacian acting on $C^{\infty}(\Lambda^{0,1}M\otimes T^{1,0}M)$ and $C^{\infty}(\Lambda^{1,1}_{J_0}M)$, respectively. 
For details see \cite{Koiso-Cx-Str} and \cite[Chap. $12$]{Besse}.
As a consequence, we get
\begin{theo}[Koiso]
	If $(M,g_0,J_0)$ is an ALE Ricci-flat K\"{a}hler manifold, it is linearly stable.
\end{theo}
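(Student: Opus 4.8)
The plan is to exploit the representation of $L_{g_0}$ on a K\"ahler manifold recorded just above the statement, and to reduce linear stability to the manifest nonpositivity of genuine Laplacians. Recall that every $h\in L^2(S^2T^*M)$ splits $L^2$-orthogonally as $h=h_H+h_A$ into its $J_0$-hermitian and $J_0$-anti-hermitian parts, that this splitting is preserved by $L_{g_0}$, and that the maps of \eqref{herm+antiherm} intertwine $L_{g_0}$ with the complex Laplacian $\Delta_C$ on $\Lambda^{0,1}M\otimes T^{1,0}M$ and with the Hodge Laplacian $\Delta_H$ on $\Lambda^{1,1}_{J_0}M$:
\[
 I(L_{g_0}h_A)=\Delta_C(I(h_A)),\qquad \kappa(L_{g_0}h_H)=\Delta_H(\kappa(h_H)).
\]
Since $L_{g_0}$ leaves the two summands invariant, it suffices to verify $\langle L_{g_0}h_A,h_A\rangle_{L^2}\le 0$ and $\langle L_{g_0}h_H,h_H\rangle_{L^2}\le 0$ separately.

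First I would check that $h_A\mapsto I(h_A)$ and $h_H\mapsto\kappa(h_H)$ are, up to a fixed positive constant, pointwise isometric bundle isomorphisms onto $\Lambda^{0,1}M\otimes T^{1,0}M$ and $\Lambda^{1,1}_{J_0}M$; this is immediate from \eqref{herm+antiherm} and the compatibility of $g_0$ and $J_0$. Crucially, being algebraic and built from the $\nabla^{g_0}$-parallel tensors $g_0$ and $J_0$, these isomorphisms commute with $\nabla^{g_0}$, hence intertwine the rough Laplacians $\tr_{g_0}\nabla^2$ on the respective bundles. They therefore extend to unitary maps (up to scaling) on $L^2$, upgrading the intertwining relations to identities of quadratic forms
\[
 \langle L_{g_0}h_A,h_A\rangle_{L^2}=c\,\langle \Delta_C I(h_A),I(h_A)\rangle_{L^2},\qquad \langle L_{g_0}h_H,h_H\rangle_{L^2}=c'\,\langle \Delta_H \kappa(h_H),\kappa(h_H)\rangle_{L^2},
\]
with $c,c'>0$.

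It remains to observe that the sign is forced: because the second-order part of $L_{g_0}$ is the nonpositive rough Laplacian $\tr_{g_0}\nabla^2$ and the identifications are parallel, the same holds for $\Delta_C$ and $\Delta_H$, so these are the negatives of sums of operators $D^*D$ --- namely $\Delta_H=-(dd^*+d^*d)$ and $\Delta_C=-(\bar\partial\bar\partial^*+\bar\partial^*\bar\partial)$. Integrating by parts against compactly supported sections and using the ALE decay of $g_0$ to discard boundary terms at infinity gives $\langle \Delta_H\omega,\omega\rangle_{L^2}\le 0$ and $\langle \Delta_C\sigma,\sigma\rangle_{L^2}\le 0$. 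Feeding this back through the quadratic-form identities yields $\langle L_{g_0}h,h\rangle_{L^2}\le 0$ for every $h$, i.e. $\spec_{L^2}(L_{g_0})\subset(-\infty,0]$, which is precisely linear stability.

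The substance of the argument is therefore entirely contained in the two intertwining relations, which rest on the K\"ahler identities of Koiso and Besse together with the triviality of $\Lambda^{n,0}M$; in a fully self-contained treatment, deriving these Bochner-type identities would be the one nontrivial computation. Granting them, as the paper does, the only genuinely noncompact point to watch is that the pointwise-algebraic identities pass to $L^2$: one must confirm that the bundle isomorphisms are $L^2$-isometries and that the self-adjoint $L^2$-realizations of $\Delta_C$ and $\Delta_H$ admit no boundary contribution under integration by parts. Given the ALE structure and the density of $C^\infty_0$, this is routine, so I expect no real obstacle beyond this bookkeeping.
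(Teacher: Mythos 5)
Your argument is correct and is exactly the route the paper takes: the theorem is stated as an immediate consequence of the intertwining relations $I(L_{g_0}h_A)=\Delta_C(I(h_A))$ and $\kappa(L_{g_0}h_H)=\Delta_H(\kappa(h_H))$ (credited to Koiso and Besse), combined with the nonpositivity of the complex and Hodge Laplacians and the $L^2$-orthogonality of the hermitian/anti-hermitian splitting. The functional-analytic bookkeeping you supply --- that the algebraic identifications are parallel, hence $L^2$-isometries up to constants, and that the quadratic forms of $\Delta_C$ and $\Delta_H$ on $C^\infty_0$ have the right sign --- is precisely what the paper leaves implicit, and it is filled in correctly.
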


\section{Ricci flow}\label{RF-Sec}
Our main result of this section is the following
\begin{theo}\label{main-theo}
	Let $(M^n,g_0)$ be an ALE Ricci-flat manifold. Assume it is linearly stable and integrable. Then for every $\epsilon>0$, there exists a $\delta>0$ such that the following holds: For any metric $g\in \mathcal{B}_{L^2\cap L^{\infty}}(g_0,\delta)$, there is a complete Ricci-DeTurck flow $(M^n,g(t))_{t\geq 0}$ starting from $g$ converging to an ALE Ricci-flat  metric $g_{\infty}\in \mathcal{B}_{L^2\cap L^{\infty}}(g_0,\epsilon)$. 
\end{theo}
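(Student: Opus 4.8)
The plan is to follow the quasilinear parabolic strategy of Koch--Lamm \cite{Koc-Lam-Rou}, adapted to the ALE setting, splitting the argument into short-time smoothing estimates, a uniform-in-time a priori $L^2$ estimate, and a convergence argument. Writing $h(t)=g(t)-g_0$, the Ricci-DeTurck flow \eqref{eq-ricci-flat} becomes a strictly parabolic equation of the schematic form
\[
\partial_t h = L_{g_0} h + \mathcal{Q}[h],
\]
where $\mathcal{Q}[h]$ is at least quadratic in $h$, $\nabla^{g_0}h$ and $\nabla^{g_0,2}h$ (compare \eqref{stat-evo-equ}) and $L_{g_0}=\Delta_{g_0}+2\Rm(g_0)\ast$ is exactly the linearization at $g_0$. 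The first task is short-time existence together with parabolic smoothing estimates in the combined $L^2\cap L^\infty$ topology, yielding pointwise $L^\infty$ bounds and weighted space-time $L^2$ bounds on $h$ and its derivatives. These let me start the flow from any $g\in\mathcal{B}_{L^2\cap L^{\infty}}(g_0,\delta)$ and guarantee that the flow extends as long as $\|h(t)\|_{L^\infty}$ remains small.

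The heart of the proof is a uniform-in-time $L^2$ estimate. Differentiating the energy gives
\[
\tfrac{1}{2}\tfrac{d}{dt}\|h\|_{L^2}^2 = \langle L_{g_0} h, h\rangle + \langle \mathcal{Q}[h], h\rangle.
\]
Linear stability makes the first term nonpositive, but because $0$ lies in the essential spectrum and $\ker_{L^2}(L_{g_0})$ may be nontrivial, one only obtains $\langle L_{g_0}h,h\rangle\le 0$ rather than a coercive estimate. To upgrade this on the stable part $\Pi_s h$, I would invoke the strict positivity for Schr\"odinger-type operators of Devyver \cite{Dev-Gau-Est}, producing a Hardy-type weighted coercivity $\langle -L_{g_0}\Pi_s h,\Pi_s h\rangle\ge c\,\|\Pi_s h\|_{\ast}^2$ that dominates the quadratic error $\langle \mathcal{Q}[h],h\rangle$ once $\|h\|_{L^\infty}$ is small. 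This forces the stable part to decay.

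The kernel part $\Pi_c h$ must be handled separately, and this is where integrability is essential. By Theorem \ref{analyticset} the set of gauged ALE Ricci-flat metrics near $g_0$ is the smooth finite-dimensional manifold $\mathcal{Z}$ with $T_{g_0}\mathcal{Z}=\ker_{L^2}(L_{g_0})$, and integrability means $\mathcal{U}\cap\mathcal{F}=\mathcal{Z}$. I would track the nearest point $\bar g(t)\in\mathcal{Z}$ to $g(t)$, write $g(t)=\bar g(t)+k(t)$ with $k(t)$ in the stable directions, and use the smooth manifold structure to convert the kernel dynamics into motion along $\mathcal{Z}$. The strict decay of the stable part then shows that $\bar g(t)$ is Cauchy, converging to a fixed $g_\infty\in\mathcal{Z}$, while $k(t)\to 0$. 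Combining this with the short-time smoothing estimates gives convergence of $g(t)$ to $g_\infty$ in $L^2\cap L^\infty$; since $g_\infty$ is a stationary point of the flow lying in $\mathcal{F}$, Proposition \ref{flatsoliton} identifies it as an ALE Ricci-flat metric, and the smallness $g_\infty\in\mathcal{B}_{L^2\cap L^{\infty}}(g_0,\epsilon)$ is propagated by choosing $\delta$ small relative to $\epsilon$.

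The main obstacle is precisely the interplay between the noncompact spectral picture and the kernel: obtaining the uniform-in-time $L^2$ bound in the absence of a spectral gap at $0$. Controlling the stable part requires the weighted coercivity coming from Devyver's strict-positivity criterion, and \emph{simultaneously} one must prevent the kernel component from driving $g(t)$ away. Integrability is what guarantees that the kernel directions are genuinely tangent to actual Ricci-flat metrics, so that the flow can shadow the moduli space $\mathcal{Z}$ and settle onto a limit rather than drift indefinitely.
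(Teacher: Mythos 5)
Your proposal follows essentially the same route as the paper: short-time smoothing estimates, a moving Ricci-flat reference metric $\bar g(t)$ in the integrable finite-dimensional family with $g(t)-\bar g(t)$ confined to the stable directions, Devyver's strict positivity to obtain coercivity of $-L_{g_0}$ there, and convergence of $\bar g(t)$ because its velocity is quadratically controlled by the stable part. One small caveat: the coercivity only yields $\int_0^\infty\|\nabla^{g_0}(h-h_0)\|_{L^2}^2\,dt<\infty$, so the convergence $g(t)\to g_\infty$ is obtained in $L^p$ for $p>2$ and in $L^\infty$ via Sobolev embedding and interpolation, not in $L^2$ as you assert; only the $L^2$-closeness of $g_\infty$ to $g_0$ is retained.
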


\subsection{An expansion of the Ricci flow}\label{sec-equ-flo}

Let us fix an ALE Ricci-flat manifold $(M^n,g_0)$ once and for all.
Recall the definition of the Ricci flow
\[
\left\{
\begin{array}{rl}
&\partial_tg=-2\Ric(g(t)), \quad\mbox{on}\quad M\times (0,+\infty),\\
&\\
& g(0)=g_0+h,
\end{array}
\right.
\]
where $h$ is a symmetric $2$-tensor on $M$ (denoted by $h\in S^2T^*M$) such that $g(0)$ is a metric.
The Ricci-DeTurck flow is given by
\[
\left\{
\begin{array}{rl}
&\partial_tg=-2\Ric(g(t))+\Li_{V(g(t),g_0)}(g(t)), \quad\mbox{on}\quad M\times (0,+\infty),\\
&\\
& g(0)=g_0+h,
\end{array}
\right.
\]
where $V(g(t),g_0)$ is a vector field defined locally by $V^k=g(t)^{ij}(\Gamma(g(t))_{ij}^k-\Gamma(g_0)_{ij}^k)$ and globally by
\begin{eqnarray}\label{de-turck-vect}
g_0(V(g(t),g_0),.):=-\div_{g(t)}g_0+\frac{1}{2}\nabla^{g(t)}\tr_{g(t)}g_0.
\end{eqnarray} 
Following \cite[Lemma 2.1]{Shi-Def}, the Ricci-DeTurck flow can be written in coordinates as
\begin{eqnarray*}
	\partial_tg_{ij}&=&g^{ab}\nabla^{g_0,2}_{ab}g_{ij}-g^{kl}g_{ip}\Rm(g_0)_{jklp}-g^{kl}g_{jp}\Rm(g_0)_{iklp}\\
	&&+g^{ab}g^{pq}\left(\frac{1}{2}\nabla^{g_0}_ig_{pa}\nabla^{g_0}_jg_{qb}+\nabla^{g_0}_ag_{jp}\nabla^{g_0}_qg_{ib}\right)\\
	&&-g^{ab}g^{pq}\left(\nabla^{g_0}_ag_{jp}\nabla^{g_0}_bg_{iq}-\nabla^{g_0}_jg_{pa}\nabla^{g_0}_bg_{iq}-\nabla^{g_0}_ig_{pa}\nabla^{g_0}_bg_{jq}\right).
\end{eqnarray*}
For our purposes, we calculate a different expansion:
Let $\bar{g}$ and $g$ two Riemannian metrics on a given manifold and $h:=g-\bar{g}$. Then a careful computation shows that in local coordinates,
\begin{align*}
2(\mathrm{Ric}(g)_{ij}-\mathrm{Ric}(\bar{g})_{ij})&=-(L_{\bar{g}} h)_{ij}+\bar{g}^{uv}(\nabla^{\bar{g},2}_{iu}h_{jv}+\nabla^{\bar{g},2}_{ju}h_{iv}-\nabla^{\bar{g},2}_{ij}h_{uv})\\
&\quad +(g^{uv}-\bar{g}^{uv})(\nabla^{\bar{g},2}_{ui}h_{jv}+\nabla^{\bar{g},2}_{uj}h_{iv}-\nabla^{\bar{g},2}_{uv}h_{ij}-\nabla^{\bar{g},2}_{ij}h_{uv})\\
&\quad+g^{uv}g^{pq}(\nabla^{\bar{g}}_uh_{pi}\nabla^{\bar{g}}_vh_{qj}-\nabla^{\bar{g}}_ph_{ui}\nabla^{\bar{g}}_vh_{qj}+\frac{1}{2}\nabla^{\bar{g}}_ih_{up}\nabla^{\bar{g}}_jh_{vq})\\
&\quad+g^{uv}(-\nabla^{\bar{g}}_uh_{vp}+\frac{1}{2}\nabla^{\bar{g}}_ph_{uv})g^{pq}(\nabla^{\bar{g}}_ih_{qj}+\nabla^{\bar{g}}_jh_{qi}-\nabla^{\bar{g}}_qh_{ij}),
\end{align*}
where $g^{uv},\bar{g}^{uv}$ are the inverse matrices of $g_{uv},\bar{g}_{uv}$, respectively.
For a calculation, see for instance \cite[p. 15]{Bam-Phd}. 
Furthermore, if a background metric $g_0$ is fixed and if  $V=V(g,g_0)$ is defined as above, then we have the expansion
\begin{align*}
V(g&,g_0)^k-{V}(\bar{g},g_0)^k=\frac{1}{2}\bar{g}^{ij}\bar{g}^{kl}(\nabla^{\bar{g}}_ih_{jl}+\nabla^{\bar{g}}_jh_{il}-\nabla^{\bar{g}}_lh_{ij})
-h_{pq}\bar{g}^{pi}\bar{g}^{qj}(\Gamma(\bar{g})_{ij}^k-\Gamma(g_0)_{ij}^k)
\\
&\quad+\frac{1}{2}\bar{g}^{ij}(g^{kl}-\bar{g}^{kl})(\nabla^{\bar{g}}_ih_{jl}+\nabla^{\bar{g}}_jh_{il}-\nabla^{\bar{g}}_lh_{ij})+\frac{1}{2}\bar{g}^{kl}(g^{ij}-\bar{g}^{ij})(\nabla^{\bar{g}}_ih_{jl}+\nabla^{\bar{g}}_jh_{il}-\nabla^{\bar{g}}_lh_{ij})
\\&\quad+\frac{1}{2}(g^{ij}-\bar{g}^{ij})(g^{kl}-\bar{g}^{kl})(\nabla^{\bar{g}}_ih_{jl}+\nabla^{\bar{g}}_jh_{il}-\nabla^{\bar{g}}_lh_{ij})-h_{pq}(g^{pi}-\bar{g}^{pi})\bar{g}^{qj}(\Gamma(\bar{g})_{ij}^k-\Gamma(g_0)_{ij}^k).
\end{align*}
Thus for $V=V(g,g_0)$ and $\bar{V}=V(\bar{g},g_0)$, we have
\begin{align*}
\Li_Vg_{ij}-\Li_{\bar{V}}\bar{g}_{ij}&=\Li_{V}\bar{g}_{ij}+\Li_{V}h_{ij}-\Li_{\bar{V}}\bar{g}_{ij}\\ &=\nabla^{\bar{g}}_iV_j+\nabla^{\bar{g}}_jV_i+V^k\nabla^{\bar{g}}_kh_{ij}+\nabla^{\bar{g}}_iV^kh_{kj}+\nabla^{\bar{g}}_jV^kh_{ik}-\Li_{\bar{V}}\bar{g}_{ij}.
\end{align*}
Now if $\bar{g}$ is a Ricci-flat metric that additionally satisfies $\bar{V}=0$, we can write the Ricci-DeTurck flow as an evolution of the difference $h(t):=g(t)-\bar{g}$ for which we get
\begin{equation}\begin{split}\label{rdt-expansion}
\partial_t h=\partial_tg&=-2\Ric(g)+2\mathrm{Ric}_{\bar{g}}+\Li_{V(g,g_0)}g-\Li_{V(\bar{g},g_0)}\bar{g}\\
&=L_{\bar{g}}h-\Li_{\langle h,\Gamma(\bar{g})-\Gamma(g_0)\rangle}\bar{g}+F*\nabla^{\bar{g}} h*\nabla^{\bar{g}} h+\nabla^{\bar{g}}(G*h*\nabla^{\bar{g}} h),
\end{split}
\end{equation}
where $\langle h,\Gamma(\bar{g})-\Gamma(g_0)\rangle^k=h_{pq}\bar{g}^{pi}\bar{g}^{qj}(\Gamma(\bar{g})_{ij}^k-\Gamma(g_0)_{ij}^k)$ and $*$ denotes a linear combination of tensor products and contractions with respect to the metric $\bar{g}$. The tensors $F$ and $G$ depend on $g^{-1}$ and $\Gamma(g_0)$.

\subsection{Short-time estimates and an extension criterion}\label{Short-time}
In this subsection we recall the short-time estimates of $C^k$-norms and an extension criterion for the Ricci-DeTurck flow. In addition, we prove some new Shi-type estimates for $L^2$-type Sobolev norms.
For the sake of simplicity, all covariant derivatives and norms in this subsection are taken with respect to $g_0$.
\begin{lemma}[A priori short-time $C^k$-estimates]\label{Ck-estimate}Let $(M,g_0)$ be a complete Ricci-flat manifold of bounded curvature. Then there exist constants $\epsilon>0$ and $\tau\geq 1$ such that if $g(0)$ is a metric satisfying
	\[\left\|g(0)-g_0\right\|_{L^{\infty}}<\epsilon,
	\]
	there exists a Ricci-DeTurck flow $(g(t))_{t\in[0,\tau]}$ with initial metric $g(0)$ which satisfies the estimates
	\[\left\|\nabla^k( g(t)-g_0)\right\|_{L^{\infty}}<C(k,\tau)t^{-k/2}\left\|g(0)-g_0\right\|_{L^{\infty}},\quad \forall  k\in \mathbb{N}_0,\quad t\in (0,\tau].
	\]
	Moreover, $(g(t))_{t\in[0,\tau)}$ is the unique Ricci-DeTurck flow starting at $g(0)$ which satisfies
	\[\left\| g(t)-g_0\right\|_{L^{\infty}}<C(0,\tau)\left\|g(0)-g_0\right\|_{L^{\infty}}. \]
	In particular, this implies the following: if $(g(t))_{t\in[0,\infty)}$ is a Ricci-DeTurck flow and such that it is in $\mathcal{B}_{L^{\infty}}(g_0,\epsilon)$ for all time, then there exist constants such that
	\[\left\|\nabla^k( g(t)-g_0)\right\|_{L^{\infty}}<C(k)\epsilon, \qquad \forall  k\in \mathbb{N},\quad t\in [1,\infty).
	\] 
\end{lemma}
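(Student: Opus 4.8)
The plan is to work with the difference tensor $h:=g-g_0$, for which equation \eqref{rdt-expansion} specialized to $\bar{g}=g_0$ reads
\[
\partial_t h = L_{g_0}h + F*\nabla h*\nabla h + \nabla(G*h*\nabla h),
\]
the Lie-derivative term dropping out because $\Gamma(g_0)-\Gamma(g_0)=0$. Since $F,G$ depend only on $g^{-1}$ and $\Gamma(g_0)$, the genuine principal part is the uniformly elliptic operator $g^{ab}\nabla^{g_0,2}_{ab}$ as soon as $\|h\|_{L^{\infty}}$ is small, so the system is strictly parabolic. As $(M,g_0)$ is complete with bounded curvature (and, being ALE, of bounded geometry), I would first invoke the local existence and uniqueness theory for this flow in the framework of Koch--Lamm \cite{Koc-Lam-Rou} (cf.\ Shi \cite{Shi-Def}): for $\|g(0)-g_0\|_{L^{\infty}}<\epsilon$ there is a time $\tau\geq 1$ and a solution on $[0,\tau]$ staying within $\mathcal{B}_{L^{\infty}}(g_0,C(0,\tau)\|g(0)-g_0\|_{L^{\infty}})$, unique in the class of flows remaining $L^{\infty}$-close to $g_0$.

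The heart of the matter is the interior derivative estimate, which I would establish by the Bernstein--Shi maximum principle technique applied to the quantities $u_k:=|\nabla^k h|^2$. Differentiating the evolution equation $k$ times and commuting covariant derivatives (using the smallness of $\|h\|_{L^{\infty}}$ and the boundedness of $\Rm(g_0)$ and its derivatives) yields a differential inequality of the schematic Shi form
\[
\partial_t u_k \leq \Delta_g u_k - 2\,u_{k+1} + C_k\Big(1+\textstyle\sum_{j\leq k}u_j\Big)\sum_{j\leq k}u_j,
\]
where any term involving $u_{k+1}=|\nabla^{k+1}h|^2$ on the right is first absorbed by the favourable $-2u_{k+1}$ via Young's inequality. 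The base case $k=0$ gives $\partial_t u_0\leq \Delta_g u_0+Cu_0$, whence the maximum principle yields $\|h(t)\|_{L^{\infty}}\leq C(\tau)\|h(0)\|_{L^{\infty}}$ on $[0,\tau]$; this anchors the \emph{linear} dependence on $\|g(0)-g_0\|_{L^{\infty}}$ in all the estimates. For the higher derivatives I would run the standard Shi induction: assuming $t^j u_j\leq C_j\|h(0)\|_{L^{\infty}}^2$ for $j<k$, I test with an auxiliary function $\Phi_k=\sum_{j=0}^{k}\beta_j t^{j}u_j$ with constants $\beta_0\gg\beta_1\gg\cdots\gg\beta_k$ chosen so that the favourable term $-2\beta_j t^{j}u_{j+1}$ at each level dominates the unfavourable contribution $\beta_{j+1}(j+1)t^{j}u_{j+1}$ produced when the time weight of the next level is differentiated. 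The maximum principle applied to $\Phi_k$ then gives $\Phi_k\leq C\|h(0)\|_{L^{\infty}}^2$, i.e.\ $\|\nabla^k h(t)\|_{L^{\infty}}\leq C(k,\tau)\,t^{-k/2}\|h(0)\|_{L^{\infty}}$ on $(0,\tau]$, as claimed.

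Uniqueness I would recover independently by writing the equation for the difference $w=g_1-g_2$ of two solutions as a linear strictly parabolic system whose coefficients are controlled by the $C^1$-norms just obtained, and concluding $w\equiv 0$ from $w(0)=0$ via Gronwall and the maximum principle. The long-time consequence then follows by a restarting argument: if the flow stays in $\mathcal{B}_{L^{\infty}}(g_0,\epsilon)$ for all $t\geq 0$, then for any $t\geq 1$ I apply the short-time estimate to the flow with initial time $t-1$ (its initial metric $g(t-1)$ lies in $\mathcal{B}_{L^{\infty}}(g_0,\epsilon)$) and evaluate at elapsed time $1\in(0,\tau]$, using $\tau\geq 1$; this gives $\|\nabla^k(g(t)-g_0)\|_{L^{\infty}}\leq C(k,\tau)\,1^{-k/2}\,\epsilon = C(k)\epsilon$.

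I expect the main obstacle to be twofold: justifying the maximum principle on the complete noncompact manifold $M$ --- which requires either a cutoff/localization argument of Omori--Yau type or exploiting that $g(t)$ inherits bounded geometry from $g_0$ on the short time interval --- and the careful bookkeeping in the Shi induction, ensuring that every curvature and lower-order error term is genuinely absorbed by the negative gradient terms while preserving both the sharp $t^{-k/2}$ weights and the linear dependence on the initial $L^{\infty}$-size.
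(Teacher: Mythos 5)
Your argument is correct and is essentially the proof the paper has in mind: the paper itself does not spell out the details but simply cites the analogous statement for negative Einstein metrics (Bamler, Proposition 2.8, and Section 3.7 of his thesis), whose proof is exactly the standard scheme you describe --- strict parabolicity of the DeTurck equation for $\|h\|_{L^{\infty}}$ small, Bernstein--Shi interior estimates with time weights $t^{k}|\nabla^k h|^2$ and absorption of the top-order errors by the good gradient term, uniqueness via Gronwall, and the restart-at-$t-1$ argument for the long-time statement. The two caveats you flag (localization of the maximum principle on the complete noncompact manifold, and the bookkeeping needed to keep the dependence on $\|g(0)-g_0\|_{L^{\infty}}$ linear) are precisely the points handled in the cited reference, so no further idea is missing.
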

\begin{proof}
	The same statement is given in \cite[Proposition 2.8]{Bam} for the case of negative Einstein metrics. The proof is standard and translates easily to the present situation.
	For more details, see e.g. \cite[Section 3.7]{Bam-Phd}.
\end{proof}
\begin{lemma}[A priori short-time $L^2$-estimate]\label{lemma-short-time-L2}
	Let $(M,g_0)$ be an ALE Ricci-flat manifold. Then there exists an $\epsilon=\epsilon(n,g_0)>0$ with the following property: Suppose that $(g(t))_{t\in[0,T_{max})}$ is a Ricci-DeTurck flow such that $h(t)=g(t)-g_0$ satisfies $\|h(t)\|_{L^{\infty}}<\epsilon$ for all $t\in [0,T_{max})$ and $\|h(0)\|_{L^2}<\infty$. Then, $\|h(t)\|_{L^2}<\infty$ for all $t\in (0,T_{max})$ and there exists a constant $C=C(n,g_0)$ such that 
	\begin{align*}
	\|h(t)\|_{L^2}\leq e^{Ct}\cdot\|h(0)\|_{L^2},\qquad \forall t\in (0,T_{max}).
	\end{align*}
\end{lemma}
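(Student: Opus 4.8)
The plan is to derive a differential inequality for the quantity $\frac{d}{dt}\|h(t)\|_{L^2}^2$ and then integrate it via Gr\"onwall. The starting point is the expansion \eqref{rdt-expansion} of the Ricci-DeTurck flow with $\bar g=g_0$, which since $g_0$ is Ricci-flat and gauged (so $\bar V=0$) reads
\begin{align*}
\partial_t h=L_{g_0}h-\Li_{\langle h,\Gamma(g_0)-\Gamma(g_0)\rangle}g_0+F*\nabla h*\nabla h+\nabla(G*h*\nabla h).
\end{align*}
Here the middle Lie-derivative term vanishes because $\bar g=g_0$ makes $\Gamma(\bar g)-\Gamma(g_0)=0$. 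First I would compute $\frac{1}{2}\frac{d}{dt}\|h\|_{L^2}^2=\int_M\langle \partial_t h,h\rangle\,d\mu_{g_0}$, substituting the expansion above. The linear term contributes $\int_M\langle L_{g_0}h,h\rangle$, and the remaining terms are the quadratic-in-$h$ remainders.

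Next I would estimate each piece. For the linear term, integration by parts gives $\int_M\langle L_{g_0}h,h\rangle=-\int_M|\nabla h|^2+2\int_M\langle \Rm(g_0)*h,h\rangle\leq C\int_M|\Rm(g_0)|\,|h|^2$, so even without using linear stability we get an upper bound $\leq C(n,g_0)\|h\|_{L^2}^2$ because $|\Rm(g_0)|$ is bounded (it decays like $r^{-n-1}$ on an ALE space). The crucial point is that we only want an upper bound growing like $e^{Ct}$, so linear stability is not needed here and we may freely discard the favorable $-\int|\nabla h|^2$ term or keep it to absorb gradient terms from the nonlinearity. For the cubic term $F*\nabla h*\nabla h*h$, the pointwise bound is $\leq C\|h\|_{L^\infty}|\nabla h|^2|h|$; integrating and using $\|h\|_{L^\infty}<\epsilon$ yields $\leq C\epsilon\int_M|\nabla h|^2$, which for $\epsilon$ small is controlled by the good gradient term. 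For the divergence term $\int_M\langle \nabla(G*h*\nabla h),h\rangle$, I would integrate by parts to move the derivative onto $h$, producing $-\int_M\langle G*h*\nabla h,\nabla h\rangle$ which again is $\leq C\|h\|_{L^\infty}\int_M|\nabla h|^2\leq C\epsilon\int_M|\nabla h|^2$.

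Collecting the estimates gives
\begin{align*}
\frac{1}{2}\frac{d}{dt}\|h\|_{L^2}^2\leq -(1-C\epsilon)\int_M|\nabla h|^2+C(n,g_0)\|h\|_{L^2}^2\leq C(n,g_0)\|h\|_{L^2}^2
\end{align*}
once $\epsilon$ is chosen small enough that $1-C\epsilon\geq0$, so the gradient term has a favorable sign and is dropped. Gr\"onwall's inequality then yields $\|h(t)\|_{L^2}^2\leq e^{2Ct}\|h(0)\|_{L^2}^2$, which is the claim. A preliminary technical point I would address first is finiteness: one must justify that $\|h(t)\|_{L^2}<\infty$ for $t>0$ and that the integrations by parts are legitimate despite noncompactness. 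The cleanest route is to run the energy estimate against a cutoff function $\chi_R$ supported in $B_{g_0}(x_0,2R)$, control the boundary error terms using the short-time $C^k$-bounds of Lemma \ref{Ck-estimate} together with the a priori smallness $\|h\|_{L^\infty}<\epsilon$, and let $R\to\infty$; the decay of $h$ and its derivatives coming from Lemma \ref{Ck-estimate} makes the cutoff errors vanish in the limit.

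The main obstacle I anticipate is precisely this justification of the integration by parts and the a priori finiteness of the $L^2$ norm on the noncompact manifold, rather than the algebraic estimates themselves, which are routine. One must argue carefully that $h(t)$ lies in $L^2$ for positive time and that no boundary terms survive the cutoff limit; this is where the ALE structure and the short-time smoothing estimates do the real work, ensuring $h$ and $\nabla h$ decay fast enough at infinity for each fixed $t$.
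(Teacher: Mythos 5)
Your energy estimate itself matches the paper's computation: both integrate $\langle\partial_t h,h\rangle$ against a cutoff, integrate by parts, and absorb the quadratic gradient terms $F*\nabla h*\nabla h$ and $\nabla(G*h*\nabla h)$ into the good term $-\int|\nabla h|^2$ using $\|h\|_{L^\infty}<\epsilon$. The gap is in the step you yourself identify as the crux. You propose to control the cutoff errors and let $R\to\infty$ by invoking ``the decay of $h$ and its derivatives coming from Lemma \ref{Ck-estimate}''; but Lemma \ref{Ck-estimate} gives only \emph{uniform} $L^\infty$ bounds $\|\nabla^k h(t)\|_{L^\infty}\le C t^{-k/2}\|h(0)\|_{L^\infty}$ and provides no spatial decay at infinity whatsoever. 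The cutoff error in the localized energy inequality is of the form $\frac{C}{R^2}\int_{B_{2R}(x)}|h|^2\,d\mu$, and with only $|h|\le\epsilon$ in hand this is bounded by $C\epsilon^2 R^{n-2}$, which blows up as $R\to\infty$. You cannot bound it by $\frac{C}{R^2}\|h(t)\|_{L^2}^2$ either, since finiteness of $\|h(t)\|_{L^2}$ for $t>0$ is exactly what is being proved; the argument as written is circular.

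The paper closes this gap with a localization-plus-covering device rather than a single cutoff limit. It sets $A(t,R)=\sup_{x\in M}\int_M|h(t)|^2\phi_{R,x}^2\,d\mu$, which is automatically finite for every fixed $R$ because $|h|\le\epsilon$ and balls have finite volume. The ALE geometry guarantees that every ball of radius $2R$ is covered by $N=N(n)$ balls of radius $R$, so the error term $\int_{B_{2R}(x)}|h|^2$ is bounded by $N\,A(s,R)$, and Gr\"onwall is applied to $A(\cdot,R)$ for each fixed $R$, yielding $A(t,R)\le A(0,R)\exp\bigl(N(C(g_0)+\tfrac{2C}{\delta R^2})t\bigr)$. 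Only then does one let $R\to\infty$: the exponent stays bounded, $A(0,R)\nearrow\|h(0)\|_{L^2}^2<\infty$, and monotone convergence delivers both the finiteness of $\|h(t)\|_{L^2}$ and the claimed bound. You would need to replace your limiting argument with something of this kind (or otherwise establish a priori that $h(t)\in L^2$ for $t>0$) for the proof to go through.
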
	
\begin{proof}
	By \eqref{rdt-expansion}, we can rewrite the Ricci-DeTurck flow with gauge $g_0$ in the schematic form
	\begin{equation}\begin{split}\label{rdt-expansion2}
	\partial_t h=\Delta h+Rm*h+F*\nabla h*\nabla h+\nabla(G*h*\nabla h).
	\end{split}
	\end{equation}
	For each $R>0$, let $\eta_R:[0,\infty)$ be a function such that $\eta_R(r)=1$ for $r\leq R$, $\eta_R(r)=0$ for $r\geq 2R$ and
	$|\nabla\eta_R|\leq 2/R$. For $x\in M$, let $\phi_{R,x}(y)=\eta_R( d(x,y))$. Then $\phi_{R,x}\equiv 1$ on $B_R(x)$, $\phi_{R,x}\equiv 0$ on $M\setminus B_{2R}(x)$ and $|\nabla\phi_{R,x}|\leq 2/R$. For notational convenience, we write $\phi=\phi_{R,x}$ in the following.
	By \eqref{rdt-expansion2}, we obtain
	\begin{align*}
	\partial_t \int_M |h|^2\phi^2 d\mu&\leq 2\int_M\langle \Delta h,\phi^2h\rangle d\mu+ C\|Rm\|_{L^{\infty}}\int_M |h|^2\phi^2d\mu\\&+C\|h\|_{L^{\infty}}\int_M |\nabla h|^2\phi^2 d\mu+\int_M \langle \nabla(G*h*\nabla h), h\rangle \phi^2 d\mu\\
	&\leq -2\int_M |\nabla h|^2\phi^2 d\mu + C\int_M |\nabla h||h||\nabla\phi|\phi d\mu\\
	& +C(g_0)\int_M |h|^2\phi^2d\mu+C\|h\|_{L^{\infty}}\int_M |\nabla h|^2\phi^2 d\mu\\
	&\leq (-2+C\epsilon+C\delta  )\int_M |\nabla h|^2\phi^2 d\mu+C(g_0)\int_M |h|^2\phi^2d\mu+\frac{C}{\delta}\int_M |h|^2|\nabla \phi|^2d\mu\\
	&\leq (C(g_0)+\frac{2C}{\delta R^2})\int_{B_{2R}(x)} |h|^2d\mu 
	\end{align*}
	for an appropriate choice of $\delta$. Define
	\begin{align*}
	A(t,R)=\sup_{x\in M}\int_M |h(t)|^2\phi_{R,x}^2d\mu.
	\end{align*}
	As $(M,g_0)$ is ALE, there exists a constant $N=N(n)$ such that each ball on $M$ of radius $2R$ can be covered by $N$ balls of radius $R$. Thus, by integration in time,
	\begin{align*}
	\int_M |h(t)|^2\phi_{R,x}^2 d\mu&\leq  \int_M |h(0)|^2\phi_{R,x}^2 d\mu+(C(g_0)+\frac{2C}{\delta R^2})\int_0^t\int_{B_{2R}(x)} |h(s)|^2d\mu ds \\
	&\leq  \int_M |h(0)|^2\phi_{R,x}^2 d\mu+N(C(g_0)+\frac{2C}{\delta R^2})\int_0^t A(s,R)ds.
	\end{align*}
	Consequently,
	\begin{align*}
	A(t,R)\leq A(0,R)+ N(C(g_0)+\frac{2C}{\delta R^2})\int_0^t A(s,R)ds,
	\end{align*}
	and by the Gronwall inequality,
	\begin{align*}
	A(t,R)\leq A(0,R)\cdot \exp\left(N(C(g_0)+\frac{2C}{\delta R^2})t\right).
	\end{align*}
	The assertion follows from letting $R\to\infty$.
\end{proof}

\begin{lemma}[A priori short-time $H^k$-estimates]\label{Hk-estimate}
	Let $(M,g_0)$ be an ALE Ricci-flat manifold. Then there exists an $\epsilon=\epsilon(n,g_0)>0$ with the following property: Suppose that $(g(t))_{t\in[0,T_{max})}$ is a Ricci-DeTurck flow such that $h(t)=g(t)-g_0$ satisfies 
	\begin{eqnarray*}
		\|h(t)\|_{L^{\infty}}<\epsilon,\quad \forall t\in [0,T_{max}).
	\end{eqnarray*}
	Then for each $T\in (0,T_{max})$ and $k\in\mathbb{N}$ there exist constants $C_k=C_k(n,g_0,T)$ such that if $\|h(t)\|_{L^2}\leq K$ for all $t\in [0,T]$, we get
	\begin{align*}
	\|\nabla^kh(t)\|_{L^2}\leq C_k\cdot t^{-k/2}\cdot ,K\qquad \forall t\in (0,T].
	\end{align*}
	In particular, if $(g(t))_{t\in[0,T_{max})}$ is a Ricci flow satisfying $\|h(t)\|_{L^{\infty}}<\epsilon$  and $\|h(t)\|_{L^2}<K$ as long as $t\in [0,T_{max})$, then there exist constants $C_k=C_k(n,g_0)$ such that 
	\begin{align*}
	\|\nabla^k h(t)\|_{L^2}\leq C_k\cdot K,\qquad \forall k\in\mathbb{N},\qquad\forall t\in [1,T_{max}).
	\end{align*}
\end{lemma}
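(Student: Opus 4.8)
The plan is to prove the estimate by energy methods, crucially using that \emph{all} covariant derivatives, the volume measure $d\mu=d\mu_{g_0}$, the inner products, and the coefficients $\Rm(g_0)$ in the evolution equation \eqref{rdt-expansion2} are taken with respect to the \emph{fixed} background metric $g_0$. In particular $\partial_t$ commutes with $\nabla=\nabla^{g_0}$ and with the measure, so differentiating under the integral sign gives the clean identity
\begin{align*}
\frac{d}{dt}\|\nabla^k h\|_{L^2}^2=2\int_M\big\langle \nabla^k\big(\Delta h+\Rm *h+F*\nabla h*\nabla h+\nabla(G*h*\nabla h)\big),\nabla^k h\big\rangle\,d\mu.
\end{align*}
All integrations by parts below are justified exactly as in Lemma \ref{lemma-short-time-L2}, by inserting the cutoffs $\phi_{R,x}$ and letting $R\to\infty$, using the ALE decay of $h(t)$. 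Since $\|h(t)\|_{L^\infty}<\epsilon$, the tensors $g(t)^{-1}$, $F$ and $G$ are uniformly bounded, and their covariant derivatives $\nabla^a F,\nabla^a G$ are controlled in $L^\infty$ by $\nabla^{\leq a}h$, hence by the a priori Shi-type bounds $\|\nabla^a h\|_{L^\infty}\leq C t^{-a/2}\epsilon$ of Lemma \ref{Ck-estimate}.

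First I would extract the differential inequality. Integrating the Laplacian term by parts produces the good term $-2\|\nabla^{k+1}h\|_{L^2}^2$, up to the commutator $[\nabla^k,\Delta]h=\sum_j\nabla^j\Rm(g_0)*\nabla^{k-j}h$, which is lower order because $\Rm(g_0)$ and its derivatives are bounded. The terms $\Rm*h$ and the commutator contribute only factors $C\|\nabla^j h\|_{L^2}$ with $j\leq k$. In the quadratic and cubic terms I would expand $\nabla^k$ by Leibniz; the only dangerous contributions are the top-order ones. For $F*\nabla h*\nabla h$ the worst term is $F*\nabla^{k+1}h*\nabla h$, estimated by $\|F\|_{L^\infty}\|\nabla h\|_{L^\infty}\|\nabla^{k+1}h\|_{L^2}\|\nabla^k h\|_{L^2}$ and split by Young's inequality into a small multiple of $\|\nabla^{k+1}h\|_{L^2}^2$ plus $\tfrac{C}{t}\|\nabla^k h\|_{L^2}^2$, where the factor $t^{-1}$ comes precisely from the sharp bound $\|\nabla h\|_{L^\infty}^2\leq Ct^{-1}\epsilon^2$. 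For the cubic term, after moving the outer derivative onto $\nabla^k h$, the worst term is $G*h*\nabla^{k+1}h$ paired with $\nabla^{k+1}h$, giving $C\|h\|_{L^\infty}\|\nabla^{k+1}h\|_{L^2}^2\leq C\epsilon\|\nabla^{k+1}h\|_{L^2}^2$, which for $\epsilon$ small is absorbed into $-2\|\nabla^{k+1}h\|_{L^2}^2$. All remaining contributions are of the same or lower order and are absorbed analogously, using Young's inequality and the Shi bounds of Lemma \ref{Ck-estimate}, producing further terms of the types $\tfrac{C}{t}\|\nabla^k h\|_{L^2}^2$ and lower-order remainders. The outcome is, for each $1\leq j\leq k$,
\begin{align*}
\frac{d}{dt}\|\nabla^j h\|_{L^2}^2\leq -\|\nabla^{j+1}h\|_{L^2}^2+\frac{C}{t}\|\nabla^j h\|_{L^2}^2+C\|\nabla^j h\|_{L^2}^2+R_j,
\end{align*}
where $R_j$ involves only $\nabla^{\leq j}h$, while for $j=0$ no $t^{-1}$ term occurs (the quadratic term is then controlled using $\|h\|_{L^\infty}$ rather than $\|\nabla h\|_{L^\infty}$), recovering Lemma \ref{lemma-short-time-L2}.

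The heart of the argument is then a weighted telescoping. I would introduce $P_k(t)=\sum_{j=0}^k c_j\,t^j\|\nabla^j h\|_{L^2}^2$ with positive weights chosen so that $c_{j-1}\geq c_j(j+C)$, i.e.\ $c_k=1$ and $c_{j-1}=c_j(j+C)$ for a suitable constant $C$. Differentiating and reindexing, the good term $-c_{j-1}t^{j-1}\|\nabla^j h\|_{L^2}^2$ coming from level $j-1$ exactly absorbs the bad contributions $\big(jc_j+Cc_j\big)t^{j-1}\|\nabla^j h\|_{L^2}^2$ produced at level $j$ both by the derivative of the weight $t^j$ and by the critical $\tfrac{C}{t}$-term; since no such $t^{-1}$-term is present at level $0$, the telescoping closes with no uncontrolled $t^{-1}\|h\|_{L^2}^2$ remainder. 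After bounding the lower-order remainders $\sum_j c_j t^j R_j$ by $CP_k+CK^2$ using the interpolation inequality $\|\nabla^j h\|_{L^2}^2\leq \|\nabla^{j-1}h\|_{L^2}\|\nabla^{j+1}h\|_{L^2}$ together with the Shi bounds, one obtains $\frac{d}{dt}P_k\leq C\,P_k+CK^2$. Gronwall on $[t_0,T]$ then yields $P_k(t)\leq e^{C(T-t_0)}(P_k(t_0)+CK^2)$, and letting $t_0\to0$ gives $P_k(t)\leq C_kK^2 e^{CT}$ on $(0,T]$; in particular $t^k\|\nabla^k h\|_{L^2}^2\leq C_kK^2$, which is the claim.

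The main obstacle I expect is justifying the limit $t_0\to0$: one must know a priori that $t_0^j\|\nabla^j h(t_0)\|_{L^2}^2$ stays bounded (indeed tends to $0$ for $j\geq1$) as $t_0\to0$, so that $\liminf_{t_0\to0}P_k(t_0)\leq c_0K^2$. I would secure this by an induction on $k$: for $t>0$ each $\|\nabla^j h(t)\|_{L^2}$ is finite by interior parabolic regularity and the ALE decay of $h(t)$, and the inductive hypothesis controls the levels $j<k$, so that the consecutive-interpolation inequality bounds $t_0^k\|\nabla^k h(t_0)\|_{L^2}^2$ uniformly; running the Gronwall estimate along a sequence $t_0\to0$ then closes the induction. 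Finally, the ``in particular'' statement follows by a time-shift: applying the estimate with $T=1$ to the flow restarted at time $t-1$, where $\|h(t-1)\|_{L^2}\leq K$, yields $\|\nabla^k h(t)\|_{L^2}\leq C_k\cdot 1^{-k/2}\cdot K=C_k K$ for all $t\geq1$, with constants independent of $T_{max}$.
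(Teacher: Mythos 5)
Your overall strategy coincides with the paper's: differentiate the flow equation $k$ times, use the Shi-type $L^{\infty}$ bounds of Lemma \ref{Ck-estimate} to reduce the nonlinear terms to a dissipation term that is absorbed for small $\epsilon$ plus critical terms carrying inverse powers of $t$, and then close the estimate with a weighted sum $\sum_j c_j t^j\|\nabla^j h\|_{L^2}^2$ whose coefficients are chosen recursively so that the dissipation at level $j-1$ absorbs the $t^{j-1}$-weighted bad terms. (One small correction to the bookkeeping: the Leibniz expansion at level $k$ produces terms $t^{-k+l-1}\|\nabla^l h\|_{L^2}^2$ for \emph{all} $1\leq l\leq k$, not only the self-referential $t^{-1}\|\nabla^k h\|_{L^2}^2$; after multiplication by $t^k$ these become $t^{l-1}\|\nabla^l h\|_{L^2}^2$ and cannot be bounded by $CP_k+CK^2$ near $t=0$, so they must also go into the telescoping. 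The recursion therefore has to read $c_{l-1}\geq l\,c_l+C\sum_{i=l}^{k}c_i$ rather than $c_{l-1}\geq c_l(l+C)$; this is a harmless adjustment of constants.)

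The genuine gap is the justification of working with the global norms $\|\nabla^j h(t)\|_{L^2}$ from the outset. You propose to discard the cutoff error terms by "letting $R\to\infty$ using the ALE decay of $h(t)$" and to secure finiteness of $\|\nabla^j h(t)\|_{L^2}$ for $t>0$ "by interior parabolic regularity and the ALE decay of $h(t)$". But the perturbation $h(0)$ is only assumed to lie in $L^2\cap L^{\infty}$; neither $h(0)$ nor $h(t)$ has any pointwise spatial decay (the background $g_0$ is ALE, the difference tensor is not), and interior parabolic regularity gives local smoothness, not square-integrability of $\nabla^j h(t)$ over the noncompact manifold. Since the finiteness of these norms is precisely what the lemma asserts, assuming it in order to pass to the limit in the integration by parts is circular. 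The paper's proof is structured to avoid exactly this: it keeps a nested family of compactly supported cutoffs $\phi_k\leq\phi_{k-1}\leq\dots\leq\phi_0$ with $|\nabla\phi_l|\leq\phi_{l-1}$ throughout, so that the localized quantity $F_k(t)=\sum_l A_l t^l\int|\nabla^l h|^2\phi_l^2\,d\mu$ is manifestly finite, the cutoff error terms $\int|\nabla^l h|^2|\nabla\phi_l|^2\,d\mu$ are re-absorbed at the next lower level of the telescoping, Gronwall is applied to $F_k$, and only at the very end is $R\to\infty$ taken, with a bound uniform in $R$, which simultaneously proves finiteness and the quantitative estimate. Your argument becomes correct if you run the entire weighted Gronwall scheme on the localized quantities rather than on the global norms.
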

\begin{proof}
	The proof follows from a delicate argument involving a sequence of cutoff functions.	
	By differentiating \eqref{rdt-expansion2}, we get
	\begin{align*}
	\partial_t \nabla^kh&=\nabla^k\Delta h+\nabla^k(Rm*h)+\nabla^k(F*\nabla h*\nabla h)+\nabla^{k+1}(G*h*\nabla h)\\
	&=\Delta\nabla^k h+\sum_{l=0}^k\nabla^lRm*\nabla^{k-l}h+\sum_{\substack{0\leq l_1,l_2,l_3\leq k\\l_1+l_2+l_3=k}}\nabla^{l_1}F*\nabla^{l_2+1}h*\nabla^{l_3+1}h\\
	&\quad+\nabla\left(\sum_{\substack{0\leq l_1,l_2,l_3\leq k\\l_1+l_2+l_3=k}}\nabla^{l_1}G*\nabla^{l_2} h*\nabla^{l_3+1}h\right).
	\end{align*}
	Let $\phi$ be a cutoff function  as in the proof of Lemma \ref{lemma-short-time-L2}. Then,
	\begin{align*}
	\partial_t\int_M |\nabla^kh|^2\phi^2 d\mu&\leq 2 \int_M\langle \Delta \nabla^k h,\nabla^k h\rangle \phi^2d\mu+C\sum_{l=0}^k \|\nabla^lRm\|_{L^{\infty}}\int_M |\nabla^{k-l}h||\nabla^lh|\phi^2d\mu\\
	&\quad+\sum_{\substack{0\leq l_1,l_2,l_3\leq k\\l_1+l_2+l_3=k}}\int_M\langle\nabla^{l_1}F*\nabla^{l_2+1}h*\nabla^{l_3+1}h,\nabla^kh\rangle\phi^2d\mu\\
	&\quad+\int_M\langle\nabla(\sum_{\substack{0\leq l_1,l_2,l_3\leq k\\l_1+l_2+l_3=k}}\nabla^{l_1}G*\nabla^{l_2} h*\nabla^{l_3+1}h),\nabla^kh\rangle\phi^2d\mu.	
	\end{align*}
	Let us consider each of these terms separately. Then we get 
	\begin{align*}
	2 \int_M\langle \Delta \nabla^k h,\nabla^k h\rangle \phi^2\,d\mu&\leq-2\int_M |\nabla^{k+1}h|^2\phi^2\,d\mu+2\int_M |\nabla^{k+1}h||\nabla^kh||\nabla\phi|\phi \,d\mu\\
	&\leq(-2+\delta)\int_M |\nabla^{k+1}h|^2\phi^2\,d\mu+\frac{1}{\delta}\int_M |\nabla^k h|^2|\nabla\phi|^2\,d\mu
	\end{align*}
	and
	\begin{align*}
	C\sum_{l=0}^k \|\nabla^lRm\|_{L^{\infty}}\int_M |\nabla^{k-l}h||\nabla^kh|\phi^2\,d\mu
	\leq C\sum_{l=0}^k \int_M |\nabla^lh|^2\phi^2\,d\mu.
	\end{align*}
	In the estimates of the higher order terms, we use the property $\|\nabla^k h\|_{L^{\infty}}\leq C_k  t^{-k/2}\epsilon$, which follows from Lemma \ref{Ck-estimate}.
	It also implies
	$\|\nabla^k F\|_{L^{\infty}}\leq C\cdot t^{-k/2}$  and $\|\nabla^k G\|_{L^{\infty}}\leq C\cdot t^{-k/2}$  for $t\in (0,T]$ and $k\in\mathbb{N}$. 
	\begin{align*}
	\sum_{\substack{0\leq l_1,l_2,l_3\leq k\\l_1+l_2+l_3=k}}&\int_M\langle\nabla^{l_1}F*\nabla^{l_2+1}h*\nabla^{l_3+1}h,\nabla^kh\rangle\phi^2\,d\mu
	\\&\leq C \sum_{0\leq l\leq i\leq k}\int_M|\nabla^{k-i}F| |\nabla^{l+1}h||\nabla^{i-l+1}h||\nabla^kh|\phi^2\,d\mu\\
	&\leq C\cdot t\sum_{0\leq l\leq i\leq k}\int_M|\nabla^{k-i}F| |\nabla^{l+1}h||\nabla^{i-l+1}h|\phi^2\,d\mu
	+C t^{-1}\int_M |\nabla^k h|^2\phi^2\,d\mu\\
	&\leq C\epsilon\sum_{l=0}^kt^{-k+l}\int_M |\nabla^{l+1}h|^2\phi^2\,d\mu +C t^{-1}\int_M |\nabla^k h|^2\phi^2\,d\mu \\
	&\leq C\cdot \epsilon \int_M |\nabla^{k+1} h|^2\phi^2d\mu+C\sum_{l=1}^kt^{-k+l-1}\int_M |\nabla^l h|^2\phi^2\,d\mu. 
	\end{align*}
	For the last term, we first perform integration by parts:
	\begin{align*}
	\int_M&\langle\nabla(\sum_{\substack{0\leq l_1,l_2,l_3\leq k\\l_1+l_2+l_3=k}}\nabla^{l_1}G*\nabla^{l_2} h*\nabla^{l_3+1}h),\nabla^kh\rangle\phi^2\,d\mu\\
	&\leq-\sum_{\substack{0\leq l_1,l_2,l_3\leq k\\l_1+l_2+l_3=k}}\int_M\langle\nabla^{l_1}G*\nabla^{l_2} h*\nabla^{l_3+1}h,\nabla^{k+1}h\rangle\phi^2\,d\mu\\&\quad+C
	\sum_{\substack{0\leq l_1,l_2,l_3\leq k\\l_1+l_2+l_3=k}}\int_M|\nabla^{l_1}G||\nabla^{l_2} h||\nabla^{l_3+1}h||\nabla^kh||\nabla\phi|\phi \,d\mu
	\end{align*}
The first of these terms is estimated by
\begin{align*}
-&\sum_{\substack{0\leq l_1,l_2,l_3\leq k\\l_1+l_2+l_3=k}}\int_M\langle\nabla^{l_1}G*\nabla^{l_2} h*\nabla^{l_3+1}h,\nabla^{k+1}h\rangle\phi^2\,d\mu
\\
&=-\int_M \langle G*h*\nabla^{k+1}h,\nabla^{k+1}h\rangle\phi^2\,d\mu\\
&\qquad-\sum_{l=1}^k\sum_{\substack{0\leq l_1,l_2\leq k\\l_1+l_2=k-l+1}}
\int_M\langle\nabla^{l_1}G*\nabla^{l_2} h*\nabla^{l_3+1}h,\nabla^{k+1}h\rangle\phi^2\,d\mu
\\
&\leq (C\left\|h\right\|_{L^{\infty}}+\delta)\int_M |\nabla^{k+1}h|^2\phi^2\,d\mu
+\frac{C}{\delta}\sum_{l=1}^k\sum_{\substack{0\leq l_1,l_2\leq k\\l_1+l_2=k-l+1}}
\int_M|\nabla^{l_1}G|^2|\nabla^{l_2} h|^2|\nabla^{l}h|^2\phi^2\,d\mu,
\end{align*}
where we used the Peter-Paul inequality $ab\leq \delta a^2+\frac{1}{4\delta}b^2$.
For the second of these terms, we estimate
\begin{align*}
C\sum_{\substack{0\leq l_1,l_2,l_3\leq k\\l_1+l_2+l_3=k}}\int_M|\nabla^{l_1}G|&|\nabla^{l_2} h||\nabla^{l_3+1}h||\nabla^kh||\nabla\phi|\phi\, d\mu
=C\int_M |G||h||\nabla^{k+1}h||\nabla^kh||\nabla\phi|\phi\, d\mu\\&\qquad+C\sum_{l=1}^k\sum_{\substack{0\leq l_1,l_2\leq k\\l_1+l_2=k-l+1}}
\int_M|\nabla^{l_1}G||\nabla^{l_2} h||\nabla^{l}h||\nabla^kh||\nabla\phi|\phi \,d\mu\\
&\leq C\left\|h\right\|_{L^{\infty}}\int_M |\nabla^{k+1}h|^2\phi^2\,d\mu+C\int_M |\nabla^kh|^2|\nabla\phi|^2\,d\mu\\
&\qquad + C\sum_{l=1}^k\sum_{\substack{0\leq l_1,l_2\leq k\\l_1+l_2=k-l+1}}
\int_M|\nabla^{l_1}G|^2|\nabla^{l_2} h|^2|\nabla^{l}h|^2\phi^2\,d\mu,
\end{align*}
where we again used the Peter-Paul inequality. Summing up, we get	
	
	\begin{align*}
	\int_M&\langle\nabla(\sum_{\substack{0\leq l_1,l_2,l_3\leq k\\l_1+l_2+l_3=k}}\nabla^{l_1}G*\nabla^{l_2} h*\nabla^{l_3+1}h),\nabla^kh\rangle\phi^2\,d\mu\\
	&\leq (\delta+C\left\|h\right\|_{L^{\infty}})\int_M |\nabla^{k+1}h|^2\phi^2\,d\mu+C\int_M|\nabla^kh|^2|\nabla\phi|^2\,d\mu\\
	&\quad+ C\sum_{l=1}^k\sum_{\substack{0\leq l_1,l_2\leq k\\l_1+l_2=k-l+1}}
\int_M|\nabla^{l_1}G|^2|\nabla^{l_2} h|^2|\nabla^{l}h|^2\phi^2\,d\mu\\
	&\leq  (\delta+C\epsilon)\int_M |\nabla^{k+1}h|^2\phi^2d\mu+C\int_M|\nabla^kh|^2|\nabla\phi|^2\,d\mu+\frac{C}{\delta}\sum_{l=1}^k
	t^{-k+l-1}\int_M |\nabla^lh|^2\phi^2\,d\mu.
	\end{align*}
	Assuming that $\epsilon,\delta>0$ are small enough, we therefore get
	\begin{align*}
	\partial_t\int_M |\nabla^kh|^2\phi^2 \,d\mu&\leq	-\int_M |\nabla^{k+1}h|^2\phi^2d\mu+C_k\int_M |\nabla^k h|^2|\nabla\phi|^2\,d\mu\\
	&\quad+\tilde{C}_k\int_M |h|^2\phi^2\,d\mu
	+ C_k\sum_{l=1}^k t^{-k+l-1}\int_M |\nabla^lh|^2\phi^2\,d\mu.
	\end{align*}
	In the following, let $x\in M$ and $\phi_l:M\to [0,1]$, $0\leq l\leq k$ a sequence of cutoff functions with the following properties:
	\begin{align*}
	\phi_l&\equiv 1\qquad \text{ on } B(x,(k+1-l)R),\\
	\phi_l&\equiv 0\qquad \text{ on } M\setminus B(x,(k+2-l)R),\\
	|\nabla \phi_l|&\leq 2/R.
	\end{align*}
	Obviously, we get $\phi_l\leq \phi_{l-1}$ for $1\leq l\leq k$ and, if $R\geq 2$, $|\nabla \phi_l|\leq \phi_{l-1}$ for  $1\leq l\leq k$. We now define a function $F_k:[0,T]\to\mathbb{R}$ as
	\begin{align*}
	F_k(t)=\sum_{l=0}^kA_l\cdot t^l\int_M |\nabla^lh|^2\phi_l^2\,d\mu,
	\end{align*}
	where $A_l$ are some positive constants we will choose later. Then we can compute
	\begin{align*}
	\partial_tF_k&=\sum_{l=1}^kl\cdot A_lt^{l-1}\int_M|\nabla^lh|^2\phi_l^2\,d\mu+\sum_{l=0}^kA_lt^l\partial_t\int_M|\nabla^lh|^2\phi_l^2\,d\mu\\
	&\leq \sum_{l=1}^kl\cdot A_lt^{l-1}\int_M|\nabla^lh|^2\phi_l^2\,d\mu-\sum_{l=0}^k A_lt^{l}\int_M|\nabla^{l+1}h|^2\phi_l^2\,d\mu+\sum_{l=0}^k\tilde{C}_lA_lt^l\int_M |h|^2\phi_l^2\,d\mu\\
	&\quad+\sum_{l=0}^k C_l\cdot A_lt^{l}\int_M|\nabla^lh|^2|\nabla\phi_l|^2\,d\mu
	+\sum_{l=0}^kC_l A_l\sum_{i=1}^lt^{i-1}\int_M |\nabla^ih|^2\phi_l^2\,d\mu\\ 
	&\leq\sum_{l=1}^k[lA_l-A_{l-1}+C_lA_lt+ \sum_{i=l}^kC_iA_i]\cdot t^{l-1}\int_{M}|\nabla^lh|^2\phi_{l-1}^2\,d\mu\\
	&\quad + C_0\cdot A_0\int_M |h|^2||\nabla\phi_0|^2\,d\mu +
	\sum_{l=0}^k\tilde{C}_lA_lt^l\int_M |h|^2\phi_0^2\,d\mu.
	\end{align*}
	Note that we used the properties $\phi_l\leq\phi_{l-1}$ and $|\nabla\phi_l|\leq \phi_{l-1}$ in the above estimate.
	Now if we choose $A_k,A_{k-1},\ldots A_0$ inductively such that
	\begin{align*}
	A_{l-1}\geq lA_l+CA_lt+ \sum_{i=l}^kC_iA_i
	\end{align*}
	for all $t\in[0,T]$,  then
	\begin{align*}
	\partial_t F_k\leq C(g_0,k,T)\int_M |h|^2\phi_0^2\,d\mu\leq C(g_0,k,T)\int_M |h|^2\,d\mu,
	\end{align*}
	so that we get $F_k(t)\leq C(g_0,k,T)\cdot \sup_{t\in[0,T]} \int_M |h|^2\,d\mu$ for all $t\in [0,T]$. The result now follows from letting $R\to\infty$.
\end{proof}
We conclude this section with some very general result due to \cite{Sch-Sch-Sim} giving a criteria for ensuring infinite time existence.

\begin{theo}[Criteria for infinite time existence]\label{delta-max-sol}
	Let $(M^n,g_0)$ be a complete Riemannian manifold such that $\|\Rm(g_0)\|_{L^{\infty}}=:k_0<+\infty$.
	Then there exists a positive constant $\tilde{\delta}=\tilde{\delta}(n,k_0)$ such that the following holds.
	Let $0<\beta<\delta\leq\tilde{\delta}$. Then every metric $g(0)$ $\beta$-close to $g_0$ has a $\delta$-maximal solution $g(t)_{t\in[0,T(g(0)))}$ with $T(g(0))$ positive and $\|g(t)-g_0\|_{L^{\infty}}<\delta$ for all $t\in[0,T(g(0)))$. The solution is $\delta$-maximal in the following sense. Either $T(g(0))=+\infty$ and $\|g(t)-g_0\|_{L^{\infty}}<\delta$ for any nonnegative time $t$ or we can extend $(g(t))_t$ to a solution on $M^n\times[0,T(g(0))+\tau)$, for some positive $\tau=\tau(n,k_0)$ and $\|g(T(g(0)))-g_0\|_{L^{\infty}}=\delta.$
\end{theo}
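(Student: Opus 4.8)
The plan is to run the classical \emph{escape-time} argument: build a solution by patching together short-time Ricci-DeTurck flows, stop it exactly when the $L^{\infty}$-distance to $g_0$ first reaches the threshold $\delta$, and exploit the fact that the short-time existence time is \emph{uniform} over the whole $\epsilon$-ball around $g_0$ in order to produce the uniform extension time $\tau$. The entire argument takes place in the fixed DeTurck gauge relative to $g_0$.

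First I would extract the input from short-time theory. Lemma \ref{Ck-estimate} (more precisely, the standard short-time existence and uniqueness for the Ricci-DeTurck flow with a fixed background metric $g_0$ of bounded geometry) supplies constants $\epsilon_0=\epsilon_0(n,k_0)>0$ and $\tau_0=\tau_0(n,k_0)\geq 1$ such that any metric $g'$ with $\|g'-g_0\|_{L^{\infty}}<\epsilon_0$ is the initial datum of a unique Ricci-DeTurck flow on $[0,\tau_0]$, satisfying the growth bound $\|g(t)-g_0\|_{L^{\infty}}\leq C(0,\tau_0)\|g'-g_0\|_{L^{\infty}}$ and the interior-in-time derivative estimates $\|\nabla^k(g(t)-g_0)\|_{L^{\infty}}\leq C(k,\tau_0)\,t^{-k/2}\|g'-g_0\|_{L^{\infty}}$. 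I then set $\tilde{\delta}:=\epsilon_0$ (or any smaller constant), so that every metric in the closed $\delta$-ball with $\delta\leq\tilde{\delta}$ is an admissible initial datum for this machinery. Given $g(0)$ that is $\beta$-close to $g_0$ with $0<\beta<\delta\leq\tilde{\delta}$, I define $T(g(0))$ to be the supremum of all $T>0$ for which a Ricci-DeTurck flow exists on $[0,T)$ with $\|g(t)-g_0\|_{L^{\infty}}<\delta$ throughout; uniqueness makes these solutions mutually consistent, hence they define a single flow on $[0,T(g(0)))$. Short-time existence, continuity of $t\mapsto\|g(t)-g_0\|_{L^{\infty}}$, and the strict inequality $\beta<\delta$ give $T(g(0))>0$.

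If $T(g(0))=+\infty$ we are in the first alternative by construction, so suppose $T:=T(g(0))<\infty$. Since $T>0$ and $\|g(t)-g_0\|_{L^{\infty}}<\delta\leq\epsilon_0$ on $[0,T)$, the interior-in-time estimates of Lemma \ref{Ck-estimate} yield uniform $C^k$ bounds on $[t_0,T)$ for any fixed $t_0\in(0,T)$; in particular $\partial_t g=-2\Ric(g)+\Li_{V(g,g_0)}g$ is bounded in every $C^k$-norm, so $\{g(t)\}$ is uniformly Cauchy as $t\to T^-$ and converges smoothly to a metric $g(T)$ of bounded geometry with $\|g(T)-g_0\|_{L^{\infty}}\leq\delta$. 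I would then identify the boundary value by maximality: if $\|g(T)-g_0\|_{L^{\infty}}<\delta$, then since this is still $<\epsilon_0$ one could restart short-time existence from $g(T)$, glue it to the old flow by uniqueness, and use continuity in time to keep the $L^{\infty}$-distance below $\delta$ slightly past $T$, contradicting the definition of $T$; hence $\|g(T)-g_0\|_{L^{\infty}}=\delta$. Finally, in either alternative $g(T)$ satisfies $\|g(T)-g_0\|_{L^{\infty}}\leq\delta\leq\tilde{\delta}\leq\epsilon_0$, so one further application of short-time existence from $g(T)$ produces a Ricci-DeTurck flow on $[T,T+\tau_0)$ which, again by uniqueness, continues $(g(t))$ to a solution on $[0,T+\tau_0)$, giving the claimed uniform $\tau=\tau_0(n,k_0)$.

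The hard part will be the passage to the limit as $t\to T^-$. One must ensure that the interior-in-time $C^k$-estimates are genuinely uniform up to the finite boundary time $T$ (which holds because $T$ is bounded away from $0$ and the flow never leaves the $\epsilon_0$-ball), that the limit $g(T)$ is a \emph{non-degenerate} metric of bounded geometry so that the \emph{same} uniform existence time $\tau_0$ applies to the restart, and that uniqueness is robust enough to glue the restarted flow to the original one. It is precisely the uniformity of $\tau_0$ over the entire $\epsilon_0$-ball, independent of the particular initial metric, that forces the extension time $\tau$ to depend only on $n$ and $k_0$.
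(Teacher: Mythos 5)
Your argument is correct and is essentially the standard escape-time/continuation proof; the paper itself gives no proof of Theorem \ref{delta-max-sol}, citing it directly from Schn\"urer--Schulze--Simon \cite{Sch-Sch-Sim}, and your write-up reproduces the argument used there (uniform short-time existence on an $\epsilon_0$-ball, definition of the $\delta$-maximal time, smooth convergence as $t\to T^-$ via interior estimates, identification of the boundary value $\|g(T)-g_0\|_{L^\infty}=\delta$ by maximality, and a restart giving the uniform extension time $\tau=\tau_0(n,k_0)$). No gaps beyond the technical points you already flag.
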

\subsection{A local decomposition of the space of metrics}\label{deco-met-sec}
In order to prove convergence of a Ricci-DeTurck flow $g(t)$ to a Ricci-flat metric $g_{\infty}$, we have to construct a  family $g_0(t)$ of Ricci-flat reference metrics. For the proof of the main theorem, it is nessecary to construct $g_0(t)$ in such a way that $\partial_t g_0= \textit{o}((g-g_0)^2)$. This section is devoted to this construction.
For this purpose, let $\mathcal{F}$ again be given by
\begin{align*}
\mathcal{F}=\left\{g\in \mathcal{M}\mid -2\Ric(g)+\Li_{V(g,g_0)}g=0 \right\}.
\end{align*}
If $g_0$ satisfies the integrability condition, then there exists an $L^2\cap L^{\infty}$-neighbourhood $\mathcal{U}$ of $g_0$ in the space of metrics such that 
\begin{eqnarray}
\widetilde{\mathcal{F}}:=\mathcal{U}\cap{\mathcal{F}},\label{def-tilde-F}
\end{eqnarray}
 is a manifold and for all $g\in\widetilde{\mathcal{F}}$, the equations $\Ric(g)=0$ and $\Li_{V(g,g_0)}g=0$ hold individually by Proposition \ref{flatsoliton}. Linearization of these two conditions show that the tangent space $T_g\widetilde{\mathcal{F}}$ is given by the kernel of the map
\begin{align*}
L_{g,g_0}h=L_gh-\Li_{\langle h,\Gamma(g)-\Gamma(g_0)\rangle}g,
\end{align*}
where $\langle h,\Gamma(g)-\Gamma(g_0)\rangle^k=g^{ik}g^{lj}h_{kl}(\Gamma(g)_{ij}^k-\Gamma(g_0)_{ij}^k)$ and $L_g$ is the Lichnerowicz Laplacian of $g$.

The next lemma ensures that the kernels $\ker L^*_{g,g_0}$ all have the same dimension when $g$ is an ALE Ricci flat metric sufficiently close to $g_0$:

\begin{lemma}\label{Fred-Prop-Lap-Mod}
	Let $(M^n,g_0)$ be a linearly stable ALE Ricci-flat manifold which is integrable. Furthermore, let $\widetilde{\mathcal{F}}$ be as in (\ref{def-tilde-F}). Then there exists an $L^2\cap L^{\infty}$-neighbourhood $\mathcal{U}$ of $g_0$ in the space of metrics such that $\dim\ker_{L^2} L^*_{g,g_0}=\dim\ker_{L^2} L_{g_0}$ for all $g\in \widetilde{\mathcal{F}}$.
\end{lemma}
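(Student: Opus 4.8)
The plan is to deduce the constancy of $\dim\ker_{L^2}L^*_{g,g_0}$ from two independent facts: the kernel dimension $\dim\ker_{L^2}L_{g,g_0}$ is already \emph{exactly} constant because $\widetilde{\mathcal{F}}$ is a manifold, and the Fredholm index of the family $g\mapsto L_{g,g_0}$ is constant by homotopy invariance. Combining them forces the cokernel dimension to be constant too, and the cokernel is what computes $\dim\ker_{L^2}L^*_{g,g_0}$.

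First I would fix an integer $k>n/2+2$ and a nonexceptional weight $\delta\le -n/2$ (chosen in a range for which the strong decay below applies), and regard $L_{g,g_0}$ as a bounded operator $H^k_\delta(S^2T^*M)\to H^{k-2}_{\delta-2}(S^2T^*M)$. Since every $g\in\widetilde{\mathcal{F}}$ is ALE of order $n-1$ with respect to the same coordinates as $g_0$ by Theorem \ref{ell-reg-prop}, the operator $L_{g,g_0}$ differs from the rough Laplacian of the flat model cone $\mathbb{R}^n/\Gamma$ only by terms decaying at order at least $n-1$. Hence the set of exceptional weights, being determined by the indicial roots of the asymptotic model, is the same for all $g\in\widetilde{\mathcal{F}}$; the chosen $\delta$ is therefore nonexceptional for every such $g$, and each $L_{g,g_0}$ is Fredholm by \cite{Loc-McO}. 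After shrinking $\mathcal{U}$ so that $\widetilde{\mathcal{F}}$ is connected (which is possible by integrability and Theorem \ref{analyticset}), the map $g\mapsto L_{g,g_0}$ is a continuous path of Fredholm operators, so $\ind(L_{g,g_0})$ is independent of $g\in\widetilde{\mathcal{F}}$.

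Next I would identify the two relevant numbers with honest $L^2$-kernels. Because the tangent space $T_g\widetilde{\mathcal{F}}$ is precisely $\ker L_{g,g_0}$ and $\widetilde{\mathcal{F}}$ is a smooth manifold, its dimension is constant and equals $\dim T_{g_0}\widetilde{\mathcal{F}}=\dim\ker_{L^2}L_{g_0}$; the identification of this kernel computed in $H^k_\delta$ with the $L^2$-kernel follows from elliptic regularity and the decay analysis of Theorem \ref{ell-reg-prop} and Remark \ref{lin-decay}, adapted to $L_{g,g_0}$, which forces solutions of $L_{g,g_0}h=0$ to be $\textit{O}_\infty(r^{-n+1})$ and hence weight-independent in the chosen range. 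The cokernel of $L_{g,g_0}\colon H^k_\delta\to H^{k-2}_{\delta-2}$ is, by weighted duality \cite{Loc-McO}, isomorphic to the kernel of the formal adjoint $L^*_{g,g_0}$ on the dual weighted space; running the same Moser-iteration and maximum-principle argument on $L^*_{g,g_0}$, which is asymptotically the same rough Laplacian, shows these cokernel representatives decay rapidly and therefore coincide with $\ker_{L^2}L^*_{g,g_0}$. Putting everything together,
\[
\dim\ker_{L^2}L^*_{g,g_0}=\dim\ker_{L^2}L_{g,g_0}-\ind(L_{g,g_0}),
\]
where both terms on the right are independent of $g$. Evaluating at $g=g_0$, where $\Gamma(g_0)-\Gamma(g_0)=0$ gives $L_{g_0,g_0}=L_{g_0}$ and the Lichnerowicz operator is formally self-adjoint, we get $\ker_{L^2}L^*_{g_0,g_0}=\ker_{L^2}L_{g_0}$, so the constant value is $\dim\ker_{L^2}L_{g_0}$, as claimed.

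The genuinely delicate step is the third one: verifying, uniformly in $g$, that the Fredholm cokernel coincides with the unweighted $L^2$-kernel of the formal adjoint. This requires checking that the weight produced by duality still lies in the range where the decay and regularity analysis applies to $L^*_{g,g_0}$, so that its $H^k_\delta$-kernel, its cokernel representatives, and its $L^2$-kernel all agree. By contrast, the index-constancy step is a soft consequence of Fredholm theory once continuity of $g\mapsto L_{g,g_0}$ and a common nonexceptional weight are in hand, and the constancy of $\dim\ker_{L^2}L_{g,g_0}$ is immediate from the manifold structure of $\widetilde{\mathcal{F}}$.
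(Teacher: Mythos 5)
Your proposal is correct and follows essentially the same route as the paper: Fredholm theory on weighted spaces, identification of the weighted kernels/cokernels with the $L^2$-kernels via the decay analysis of Theorem \ref{ell-reg-prop}, constancy of $\dim\ker_{L^2}L_{g,g_0}$ from the manifold structure of $\widetilde{\mathcal{F}}$, and vanishing of the index forced by self-adjointness of the Lichnerowicz operator. The only cosmetic difference is that the paper obtains index constancy by viewing $L_{g,g_0}$ as a small perturbation of the formally self-adjoint operator $L_g$ at each fixed $g$ (local constancy of the index), whereas you homotope along a connected path back to $L_{g_0,g_0}=L_{g_0}$; both are standard and equivalent here.
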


\begin{proof}
	First, we claim that elements in the kernel of $L_{g,g_0}$ have 
	decay rate $-(n-1)$. This follows along the lines of the proof of Theorem $2.7$ and we are able to establish \eqref{first_decay} if $h\in\ker_{L^2}L_{g,g_0}$. To improve the decay, we use the special algebraic structure of the operator $L_{g,g_0}$ by considering the divergence and the trace with respect to $g$ of $L_{g,g_0}h$:
	\begin{eqnarray*}
		0&=&\div_g(L_{g,g_0}h)\\
		&=&\Delta_g(\div_gh)-\nabla^g(\div_g(<h,\Gamma(g)-\Gamma(g_0)>))-\Delta_g(<h,\Gamma(g)-\Gamma(g_0)>),\\
		0&=&\Delta_g\tr_gh-2\div_g(<h,\Gamma(g)-\Gamma(g_0)>),
	\end{eqnarray*}
	which implies the following relation:
	\begin{eqnarray*}
		\Delta_g\left(\div_gh-\frac{\nabla^g\tr_gh}{2}-<h,\Gamma(g)-\Gamma(g_0)>\right)=0.
	\end{eqnarray*}
	Since the vector field $\div_gh-\frac{\nabla^g\tr_gh}{2}-<h,\Gamma(g)-\Gamma(g_0)>$ goes to $0$ at infinity, the maximum principle ensures that $$\div_gh-\frac{\nabla^g\tr_gh}{2}-<h,\Gamma(g)-\Gamma(g_0)>=0.$$ 
	An asymptotic expansion of this equation analoguous to \eqref{improved_decay} shows that $h=\textit{O}(r^{-(n-1)}).$
	In particular, the previous claim implies that:
	$$\ker_{L^2}(L_{g,g_0})=\ker_{L^2_{\delta}}(L_{g,g_0})=\ker_{H^k_{\delta}}(L_{g,g_0}),$$ where $\delta\in (-n+1,-n/2]$ is a nonexceptional weight  and $k$ can 
	be any natural number.  
	
	Now, $L_{g_0}=L_{g_0,g_0}$ is Fredholm as a map from $H^k_{\delta}(S^2T^*M)$ to  $H^{k-2}_{\delta-2}(S^2T^*M)$ with Fredholm index $0$. The same holds for $L_g$ with $g\in\mathcal{F}$ in a sufficiently small neighborhood of $g_0$. Observe that $L_{g,g_0}-L_g$ is a bounded operator as a map from $H^k_{\delta}(S^2T^*M)$ to  $H^{k-2}_{\delta-2}(S^2T^*M)$, with arbitrarily small norm operator. Therefore, by the openness of the set of Fredholm operators with respect to the operator norm, $L_{g,g_0}$ has the same index as $L_{g_0,g_0}$, which is $0$. 
	Therefore we get
	\begin{eqnarray*}
		0&=&\dim(\ker_{L^2}(L_{g,g_0}))-\dim(\ker_{L^2}(L^*_{g,g_0}))\\
		&=&\ind_{H^k_{\delta}}(L_{g_0,g_0})\\
		&=&\ind_{H^k_{\delta}}(L_{g,g_0})\\
		&=&\dim(\ker_{H^k_{\delta}}(L_{g,g_0}))-\dim(\ker_{H^k_{\delta}}(L^*_{g,g_0})).
	\end{eqnarray*}
\end{proof}

Now we claim that if $\mathcal{U}$ is small enough, every metric $g\in\mathcal{U}$ can be decomposed uniquely as $g=\bar{g}+h$ where $\bar{g}\in \widetilde{\mathcal{F}}$ and $h\in \overline{L_{\bar{g},g_0}(C^{\infty}_{0}(S^2T^{*}M))}$ (where the closure is taken with respect to $L^2\cap L^{\infty}$). Indeed, this follows from the implicit function theorem applied to the map
\begin{eqnarray*}
	\Phi:\widetilde{\mathcal{F}}\times \overline{L_{g_0}(C^{\infty}_{0}(S^2T^*M))}&\to& \mathcal{M},\\
	(\bar{g},h)&\to& \bar{g}+h-\sum_{i=1}^m(h,e_i(\bar{g}))_{L^2}e_i(\bar{g}),
\end{eqnarray*}
where $\left\{e_1(\bar{g}),\ldots e_m(\bar{g})\right\}$ is an $L^2(\bar{g})$ orthonormal basis of $\ker_{L^2}(L_{\bar{g},g_0}^*)$ which can be chosen to depend smoothly on $\bar{g}$ by Lemma \ref{Fred-Prop-Lap-Mod}.

Let now $(g(t))_{t\in[0,T)}$ be a Ricci-DeTurck flow in $\mathcal{U}$ and $(g_0(t))_{t\in[0,T)}\in \widetilde{\mathcal{F}}$ be the family of Ricci-flat metrics such that $$g(t)-g_0(t)\in\overline{{L_{g_0(t),g_0}(C^{\infty}_{0}(S^2T^{*}M))}}.$$
Writing $h(t)=g(t)-g_0$ and $h_0(t)=g_0(t)-g_0$, we see that $h(t)-h_0(t)=g(t)-g_0(t)$ admits the expansion
\begin{eqnarray*}
	\partial_t h-{L}_{g_0(t),g_0}(h-h_0)&=&R[h-h_0]\\
	&=&F*\nabla(h-h_0)*\nabla(h-h_0)+\nabla(G*(h-h_0)*\nabla(h-h_0)),
\end{eqnarray*}
where the connection is now with respect to $g_0(t)$.

Before stating the next lemma, we need to recall the Hardy inequality for  Riemannian manifolds with nonnegative Ricci curvature and positive asymptotic volume ratio due to Minerbe \cite[Theorem 2.23]{Min-Wei-Sob-Ric-Fla}:
\begin{theo}(Minerbe)\label{Hardy-Ineq}
	Let $(M^n,g)$ be a Riemannian manfold with nonnegative Ricci curvature and Euclidean volume growth, i.e. $$\AVR(g):=\lim_{r\rightarrow+\infty}\frac{\vol_gB_g(x,r)}{r^n}>0,$$ for some (and hence all) $x\in M$. Then,
	\begin{eqnarray*}
		\int_Mr_x^{-2}|\phi|^2d\mu_g\leq C(n,\AVR(g))\int_M|\nabla^g\phi|^2d\mu_g,\quad \forall \phi\in C_0^{\infty}(M),
	\end{eqnarray*}
	where $r_x(y)=d(x,y)$.
\end{theo}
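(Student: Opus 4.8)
The plan is to deduce the inequality from the \emph{ground-state substitution} (Agmon--Allegretto--Piepenbrink) identity: for any positive function $w$ and any $\phi\in C_0^{\infty}(M)$, writing $\phi=w\psi$ and integrating by parts once yields
\[
\int_M|\nabla^g\phi|^2\,d\mu_g=\int_M\left(-\frac{\Delta_g w}{w}\right)\phi^2\,d\mu_g+\int_M w^2\left|\nabla^g\left(\frac{\phi}{w}\right)\right|^2\,d\mu_g .
\]
Since the last term is nonnegative, it suffices to produce a positive \emph{supersolution} $w$ with $-\Delta_g w\geq \frac{c}{r_x^2}\,w$ for some $c=c(n,\AVR(g))>0$; the Hardy inequality then holds with $C=1/c$. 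Thus the whole problem is reduced to constructing such a $w$, and this is exactly where the two hypotheses—nonnegative Ricci curvature and Euclidean volume growth—must enter.

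My first attempt would be the naive radial ansatz $w=r_x^{-\alpha}$. Using the Laplacian comparison $\Delta_g r_x\leq (n-1)/r_x$, valid under $\Ric(g)\geq0$, one finds that this only produces the one-sided bound $-\Delta_g w/w\leq \alpha(n-2-\alpha)\,r_x^{-2}$, which is the wrong direction: the comparison bounds $\Delta_g r_x$ from above, whereas the supersolution property needs a lower bound on $\Delta_g r_x$, and $\Ric(g)\geq0$ alone does not provide one. This is precisely the point at which the volume growth hypothesis has to compensate, and the clean fix is to replace $r_x$ by the Green's function. Since Euclidean volume growth makes $(M,g)$ nonparabolic, there is a minimal positive Green's function $G=G(x,\cdot)$, and Li--Yau-type estimates together with $\AVR(g)>0$ give the two-sided bound $c_1 r_x^{2-n}\leq G\leq c_2 r_x^{2-n}$. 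Taking $w=G^{\beta}$ with $\beta\in(0,1)$ and using that $G$ is harmonic away from $x$ yields the \emph{exact} identity $-\Delta_g w/w=\beta(1-\beta)\,|\nabla^g\log G|^2$, with the correct sign and no error term.

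The remaining, and genuinely hard, step is the pointwise lower bound $|\nabla^g\log G|^2\geq c\,r_x^{-2}$, equivalently $|\nabla^g G|\geq c'\,r_x^{1-n}$. The upper gradient bound $|\nabla^g G|\lesssim r_x^{1-n}$ is standard, but a uniform lower bound is delicate because a priori $G$ may have critical points; to secure it one would invoke sharp gradient estimates for Green's functions on manifolds with $\Ric(g)\geq0$ and Euclidean volume growth, phrased through the Green's distance $b=G^{1/(2-n)}\sim r_x$, for which $|\nabla^g b|$ is controlled. I expect this gradient lower bound to be the main obstacle. If it cannot be secured pointwise, the argument must be run in an averaged form, and the route I would keep in reserve is a direct reduction to a one-dimensional weighted Hardy inequality: using $|\nabla^g\phi|^2\geq|\partial_{r_x}\phi|^2$ and the coarea formula in geodesic polar coordinates, Bishop--Gromov gives monotonicity of $s\mapsto\mathcal{A}(s,\omega)/s^{n-1}$ for the polar area element, which makes $\mathcal{A}(s,\omega)$ an admissible weight for the classical $1$-D Hardy inequality $\int u^2 s^{-2}\mathcal{A}\,ds\lesssim\int (u')^2\mathcal{A}\,ds$ applied ray by ray. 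The obstruction in this second approach is the cut locus: along a ray $\phi$ need not vanish at the endpoint $s=\mathrm{cut}(\omega)$, so the ray-by-ray inequality breaks at the boundary and must be repaired by a patching argument over a good covering by annuli of controlled geometry, summing the local estimates through a discrete Hardy inequality whose convergence is guaranteed by the Euclidean volume growth.
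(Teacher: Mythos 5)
The paper does not prove this statement; it is quoted verbatim from Minerbe \cite{Min-Wei-Sob-Ric-Fla}, so the only meaningful comparison is with Minerbe's own argument, which is an annuli-patching proof --- essentially the route you keep ``in reserve'', not the one you develop. Your primary route is set up correctly: the ground-state substitution identity is right, the observation that $w=r_x^{-\alpha}$ fails because Laplacian comparison bounds $\Delta_g r_x$ from the wrong side is right, and the algebra $-\Delta_g G^{\beta}=\beta(1-\beta)G^{\beta}|\nabla^g\log G|^2$ away from the pole is right. But the route collapses exactly where you flag it: the pointwise bound $|\nabla^g G|\geq c\,r_x^{1-n}$ is not a theorem under $\Ric(g)\geq 0$ and $\AVR(g)>0$. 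Li--Yau controls $G$ itself two-sidedly and Cheng--Yau gives the \emph{upper} gradient bound, but $G$ may a priori have critical points, and the available substitutes (Colding's identities for $b=G^{1/(2-n)}$) control $|\nabla^g b|$ only in an integral or averaged sense on level sets. An averaged lower bound does not make $G^{\beta}$ a pointwise supersolution of $-\Delta_g w\geq c\,r_x^{-2}w$, which is what the Agmon identity requires termwise. So the main argument cannot be closed as written; this is a genuine gap, not a technicality.

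Your fallback is the correct strategy and is, in substance, Minerbe's proof: decompose $M$ into dyadic annuli $A_i=B_g(x,2^{i+1})\setminus B_g(x,2^i)$, use on each annulus a scale-invariant Neumann--Poincar\'e (or Sobolev--Neumann) inequality --- available under $\Ric(g)\geq 0$ via volume doubling and Buser's inequality, with constants controlled because Bishop--Gromov plus $\AVR(g)>0$ pins $\vol_g B_g(x,r)$ between $c(n,\AVR)r^n$ and $\omega_n r^n$ --- and then sum via a discrete Hardy inequality, whose hypothesis is precisely the Euclidean volume growth. But in your write-up this is only named: you do not state the local inequality needed on each $A_i$, you do not explain how mean values over consecutive annuli are compared (this is the step that replaces the ray-by-ray integration that the cut locus forbids), and you do not verify the summability condition in the discrete Hardy step. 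To turn the proposal into a proof you should abandon the Green's function route and execute the patching argument in full.
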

The  next lemma controls the time derivative of $h_0$ in the $C^k$ topology in terms of the $L^2$ norm of the gradient of $h-h_0$.

\begin{lemma}\label{est_g_0}Let $\mathcal{U}$ be an $L^2\cap L^{\infty}$-neighbourhood of $g_0$ such that the above decomposition holds. Let
	$(g(t))_{t\in[0,T)}$ be a Ricci-DeTurck flow in $\mathcal{U}$ and let $g_0(t)$, $h(t)$, $h_0(t)$ be defined as above for $t\in[0,T)$.
	Then we have the following estimate that holds for $t\in(0,T)$:
	\begin{align*}
	\left\|\partial_t h_0\right\|_{C^k(g_0(t))}\leq C(k)\left\|\nabla^{g_0(t)}(h-h_0)\right\|_{L^2(g_0(t))}^2.
	\end{align*}
\end{lemma}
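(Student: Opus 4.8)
The plan is to differentiate in time the orthogonality relation that defines the local decomposition and to exploit the cancellation coming from $L^*_{g_0(t),g_0}e_i=0$ together with Minerbe's Hardy inequality. Set $w:=h-h_0=g(t)-g_0(t)$. By construction of the decomposition, $w$ is $L^2$-orthogonal to $\ker_{L^2}L^*_{g_0(t),g_0}$, i.e.\ $(w,e_i(g_0(t)))_{L^2(g_0(t))}=0$ for $i=1,\dots,m$, where the $e_i(g_0(t))$ form a smoothly varying $L^2(g_0(t))$-orthonormal basis of that cokernel (Lemma \ref{Fred-Prop-Lap-Mod}). Since $g_0(t)\in\widetilde{\mathcal{F}}$, the tensor $\partial_t h_0=\partial_t g_0(t)$ lies in $T_{g_0(t)}\widetilde{\mathcal{F}}=\ker_{L^2}L_{g_0(t),g_0}$, a finite-dimensional space of tensors decaying like $r^{-(n-1)}$ (Theorem \ref{ell-reg-prop} and the proof of Lemma \ref{Fred-Prop-Lap-Mod}). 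On such a finite-dimensional space all norms are equivalent with constants uniform for $g_0(t)$ near $g_0$, and the linear map $v\mapsto((v,e_i)_{L^2})_i$ is an isomorphism onto $\mathbb{R}^m$ (it is the identity at $g_0$, where $L_{g_0}=L_{g_0}^*$, hence invertible nearby by continuity). It therefore suffices to estimate $\sum_i\lvert(\partial_t h_0,e_i)_{L^2}\rvert$ by $\|\nabla^{g_0(t)}w\|_{L^2}^2$.

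To this end, differentiate $0=\int_M\langle w,e_i\rangle_{g_0(t)}\,d\mu_{g_0(t)}$ in $t$. Substituting $\partial_t w=\partial_t h-\partial_t h_0=L_{g_0(t),g_0}w+R[w]-\partial_t h_0$, which follows from the evolution equation for $h-h_0$ established above, and integrating by parts, the principal linear term cancels because $\int_M\langle L_{g_0(t),g_0}w,e_i\rangle=\int_M\langle w,L^*_{g_0(t),g_0}e_i\rangle=0$. What remains is
\begin{align*}
(\partial_t h_0,e_i)_{L^2}=\int_M\langle R[w],e_i\rangle\,d\mu_{g_0(t)}+\mathcal{R}_i,
\end{align*}
where $\mathcal{R}_i$ collects the terms produced by the $t$-dependence of $e_i$, of the volume form $d\mu_{g_0(t)}$ and of the pairing $\langle\cdot,\cdot\rangle_{g_0(t)}$; each such term is linear in $w$ and linear in $\partial_t h_0$.

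It remains to see that both contributions are quadratically small. Recalling $R[w]=F*\nabla^{g_0(t)}w*\nabla^{g_0(t)}w+\nabla^{g_0(t)}(G*w*\nabla^{g_0(t)}w)$, the first summand is controlled by $\|e_i\|_{L^{\infty}}\|\nabla^{g_0(t)}w\|_{L^2}^2$, while in the second we integrate by parts to move the derivative onto $e_i$, leaving $\int_M|w|\,|\nabla^{g_0(t)}w|\,|\nabla^{g_0(t)}e_i|$. Since $e_i$ decays so that $|\nabla^{g_0(t)}e_i|\lesssim r^{-1}$ and $(M,g_0(t))$ is Ricci-flat with $\AVR(g_0(t))>0$, Theorem \ref{Hardy-Ineq} gives $\int_M r^{-2}|w|^2\lesssim\|\nabla^{g_0(t)}w\|_{L^2}^2$, whence $\int_M\langle R[w],e_i\rangle\lesssim\|\nabla^{g_0(t)}w\|_{L^2}^2$. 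The same Hardy estimate applied to $\mathcal{R}_i$, using $|e_i|,|\nabla^{g_0(t)}e_i|\lesssim r^{-1}$, yields $|\mathcal{R}_i|\lesssim\|\nabla^{g_0(t)}w\|_{L^2}\,\|\partial_t h_0\|_{L^2}$. Summing over $i$ and using the isomorphism above,
\begin{align*}
\|\partial_t h_0\|_{L^2}\leq C\|\nabla^{g_0(t)}w\|_{L^2}^2+C\|\nabla^{g_0(t)}w\|_{L^2}\,\|\partial_t h_0\|_{L^2};
\end{align*}
since $\|\nabla^{g_0(t)}w\|_{L^2}$ is small along the flow, the last term is absorbed into the left-hand side, and the finite-dimensional norm equivalence upgrades $\|\partial_t h_0\|_{L^2}\lesssim\|\nabla^{g_0(t)}w\|_{L^2}^2$ to the claimed $C^k(g_0(t))$ bound. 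The main obstacle is the passage from a bound that is a priori only linear in $w$ to a genuinely quadratic one: this rests on the cancellation $L^*_{g_0(t),g_0}e_i=0$ (which is why the decomposition must be taken orthogonal to the cokernel rather than the kernel) and, crucially, on Minerbe's Hardy inequality, which converts the weighted $L^2$-norms of $w$ against the decaying $e_i$ into $\|\nabla^{g_0(t)}w\|_{L^2}^2$. Verifying the decay $|\nabla^{g_0(t)}e_i|\lesssim r^{-1}$ uniformly in $t$, and the smallness needed to absorb the cross term, are the technical points to check.
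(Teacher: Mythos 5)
Your argument is correct and is essentially the paper's own proof in a different packaging: the paper differentiates the decomposition $h-h_0=k-\sum_i(k,e_i)e_i$ and inverts $\mathrm{id}+A(h-h_0,\cdot)$, which amounts to your step of differentiating $(w,e_i)_{L^2(g_0(t))}=0$, cancelling the linear term against $L^*_{g_0(t),g_0}e_i=0$, and absorbing the remainder linear in $\partial_t h_0$; the quadratic bound on $\int\langle R[w],e_i\rangle$ via integration by parts and Minerbe's Hardy inequality, and the upgrade to $C^k$ by elliptic regularity on the finite-dimensional space, are also exactly the paper's. One minor adjustment: bound $\mathcal{R}_i$ by $C\left\|w\right\|_{L^2}\left\|\partial_t h_0\right\|$ via Cauchy--Schwarz (no Hardy needed there), so that the absorption uses the $L^2$-smallness of $w$ guaranteed by $\mathcal{U}$ rather than smallness of $\left\|\nabla^{g_0(t)}w\right\|_{L^2}$, which is not available for small $t$.
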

\begin{proof}
	Let $\left\{e_1(t),\ldots e_m(t)\right\}$ be a family of $L^2(d\mu_{g_0(t)})$-orthonormal bases of $\ker_{L^2}\left({L}_{g_0(t),g_0}^*\right)$. Note that $\partial_te_i(t)$ depends linearly on $\partial_th_0(t)$.
	We can write
	$$h(t)-h_0(t)=k(t)- \sum_{i=1}^m(k(t),e_i(t))_{L^2}\cdot e_i(t)$$
	for some $k(t_0)\in  \overline{L_{g_0(t_0),g_0}(C^{\infty}_{0}(S^2T^{*}M))}=:N$. Note that by this condition on $k(t_0$), we have $(k(t_0),e_i(t_0))_{L^2}=0$ for all $i\in \left\{1,\ldots m\right\}$.
	Therefore, differentiating at time $t_0$ yields
	\begin{align*}
	h'&=h_0'+k'-\sum_{i=1}^m(k',e_i)_{L^2}\cdot e_i -\sum_{i=1}^m(k,e_i')_{L^2}\cdot e_i
	-\sum_{i=1}^m(k*h_0',e_i)_{L^2}\cdot e_i\\
	&=h_0'+k'_N -\sum_{i=1}^m(h-h_0,e_i')_{L^2}\cdot e_i
	-\sum_{i=1}^m((h-h_0)*h_0',e_i)_{L^2}\cdot e_i\\
	&=:h_0'+k'_N +A(h-h_0,h_0'),
	\end{align*}
	where $A$ depends linearly on both entries.
	Let us split this expression into
	$A=A_{\widetilde{\mathcal{F}}}+A_{N}$
	according to the decomposition $T_{g_0(t_0)}\widetilde{\mathcal{F}}\oplus N$.
	If we also split $h'=h'_{\widetilde{\mathcal{F}}}+h'_{N}$, we get
	\begin{align*}
	\begin{pmatrix}
	h'_{\widetilde{\mathcal{F}}} \\ h'_{N}\end{pmatrix}=
	\begin{pmatrix}
	id_{T_{g_0(t_0)}\widetilde{\mathcal{F}}}+A_{\widetilde{\mathcal{F}}}(h-h_0,.) & 0 \\
	A_{N}(h-h_0,.) & id_{\bar{N}}
	\end{pmatrix}
	\cdot \begin{pmatrix}
	h_0' \\ k' \end{pmatrix}
	\end{align*}
	By inverting, we conclude that
	\begin{align*}
	h'_0=(id_{T_{g_0(t_0)}\widetilde{\mathcal{F}}}+A_{\widetilde{\mathcal{F}}}(h-h_0,.))^{-1}h'_{\widetilde{\mathcal{F}}}.
	\end{align*}
	Note that the orthogonal projection $\Pi:T_{g_0(t_0)}\widetilde{\mathcal{F}}\to \ker_{L^2}\left({L}_{g_0(t_0),g_0}^*\right)$ is an isomorphism. 
	Because $\partial_th={L}_{g_0(t),g_0}(h-h_0)+R[h-h_0]$ and by elliptic regularity,
	\begin{align*}
	&\left\|h_0'(t)\right\|_{C^k(g_0(t))}\leq C\left\|(h'(t))_{\widetilde{\mathcal{F}}}\right\|_{L^2(g_0(t))}\\
	&\leq C\left\|(R[h-h_0])_{\ker_{L^2}\left({L}_{g_0(t_0),g_0}^*\right)}\right\|_{L^2(g_0(t))}\\
	&\leq C\sum_{i=1}^m| (F*\nabla^{g_0(t)}(h-h_0)*\nabla^{g_0(t)}(h-h_0)+\nabla^{g_0(t)}(G*(h-h_0)*\nabla^{g_0(t)}(h-h_0)),e_i)_{L^2(g_0(t))}|\\
	&\leq C\left\|\nabla^{g_0(t)}(h-h_0)\right\|_{L^2(g_0(t))}^2+\sum_{i=1}^m|(G*r^{-1}\cdot(h-h_0)*\nabla^{g_0(t)}(h-h_0),r\nabla^{g_0(t)} e_i)_{L^2(g_0(t))}|\\
	&\leq C\left\|\nabla^{g_0(t)}(h-h_0)\right\|_{L^2(g_0(t))}^2.
	\end{align*}
	We used the Hardy inequality (Theorem \ref{Hardy-Ineq}) and the fact that $r\nabla^{g_0(t)} e_i$ is bounded by elliptic regularity in the last step.
\end{proof}
Before proving theorem \ref{main-theo}, we start by recalling a result by Devyver \cite[Definition 6]{Dev-Gau-Est} adapted to our context:
\begin{theo}[Strong positivity of $L_{g_0}$]\label{Dev-Str-Pos}
	Let $(M^n,g_0)$ be an ALE Ricci-flat space that is linearly stable. Then the restriction of $-L_{g_0}$ to the orthogonal of $\ker_{L^2}(L_{g_0})$ is strongly positive, i.e. there exists some positive $\alpha_{g_0}\in(0,1]$ such that
	\begin{eqnarray*}
		\alpha_{g_0}(-\Delta_{g_0}h,h)_{L^2(g_0)}\leq (-L_{g_0}h,h)_{L^2(g_0)}\quad \forall h\in L_{g_0}(C^{\infty}_{0}(S^2T^{*}M)).
	\end{eqnarray*}
	\begin{proof}[Sketch of proof] The proof is as in \cite{Dev-Gau-Est} with some minor modifications we point out here.
		Write $-L_{g_0}=-\Delta_{g_0}+R_+-R_-$ where $R_+$ and $R_-$ correspond to the positive (resp.\ non-positive) eigenvalues of $Rm(g_0)*$. Let $H=-\Delta_{g_0}+R_+$ and $A:L^2(S^2T^*M)\to L^2(S^2T^*M)$ be defined by $A=H^{-1/2}R_-H^{-1/2}$. The operator $A$ is compact \cite[Corollary 1.3]{Car-Coh}
		Because $-L_{g_0}$ is nonnegative, all eigenvalues of $A$ lie in $[0,1]$. It can be shown that $H^{1/2}$ maps $\ker_{L^2}(L_{g_0})$ isomorphically to $\ker_{L^2}(1-A)$. As $A$ is a compact operator, we get the condition
		\begin{align*}
		(Ah,h)_{L^2(g_0)}\leq (1-\epsilon)\left\|h\right\|_{L^2(g_0)}^2,\quad \forall h\in H^{1/2}(C^{\infty}_{0}(S^2T^*M)\cap \ker_{L^2}(L_{g_0})^{\perp}),
		\end{align*}
		for some $\epsilon>0$
		which in turn is equivalent to
		\begin{align*}
		(R_-h,h)_{L^2(g_0)}\leq (1-\epsilon)(Hh,h)_{L^2(g_0)},\quad \forall h\in C^{\infty}_{0}(S^2T^*M)\cap \ker_{L^2}(L_{g_0})^{\perp}.
		\end{align*}
	\end{proof}
\end{theo}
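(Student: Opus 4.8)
The plan is to follow Devyver's ground-state transform and reduce the claimed coercivity to a spectral gap for a compact operator. Since $g_0$ is Ricci-flat we have $-L_{g_0}=-\Delta_{g_0}-2\,\Rm(g_0)\ast$, so the only obstruction to positivity is the zeroth-order curvature endomorphism acting on $S^2T^*M$. First I would split this endomorphism into its fibrewise positive and non-positive parts, writing $-2\,\Rm(g_0)\ast=R_+-R_-$ with $R_+,R_-\geq 0$, and set $H:=-\Delta_{g_0}+R_+$, so that $-L_{g_0}=H-R_-$ and $H\geq-\Delta_{g_0}\geq 0$. A short integration by parts shows $\ker_{L^2}H=0$ (any element would be $g_0$-parallel, hence of constant pointwise norm and thus not in $L^2$ on a non-flat ALE end), so $H$ is a nonnegative self-adjoint operator with trivial kernel and $H^{-1/2}$ is well defined by the spectral theorem.

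Next I would introduce the self-adjoint operator $A:=H^{-1/2}R_-H^{-1/2}$ on $L^2(S^2T^*M)$. Since $R_-\geq 0$ we have $A\geq 0$, and the linear stability hypothesis $-L_{g_0}\geq 0$ reads $(R_-h,h)_{L^2(g_0)}\leq (Hh,h)_{L^2(g_0)}$ for all $h$; substituting $h=H^{-1/2}u$ gives $(Au,u)_{L^2(g_0)}\leq\|u\|_{L^2(g_0)}^2$, so that the spectrum satisfies $\sigma(A)\subset[0,1]$.

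The analytic heart of the argument is to prove that $A$ is \emph{compact}. Here the ALE geometry is essential: because $(M,g_0)$ is ALE, the curvature satisfies $\Rm(g_0)=\textit{O}(r^{-n-1})$ by \cite[Theorem 1.1]{Ban-Kas-Nak}, so $R_-$ is a rapidly decaying potential and is relatively compact with respect to $H$; the required mapping properties of $H^{-1/2}$ together with the Rellich-type embedding on ALE manifolds follow from Carron's estimates \cite[Corollary 1.3]{Car-Coh}. Once $A$ is compact, its spectrum is a sequence of finite-multiplicity eigenvalues accumulating only at $0$; combined with $\sigma(A)\subset[0,1]$ this yields an $\epsilon\in(0,1)$ with $A\leq(1-\epsilon)\Id$ on the orthogonal complement of $\ker(\Id-A)$. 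Finally I would identify the kernels through the transform: the identity $(\Id-A)H^{1/2}h=H^{-1/2}(H-R_-)h=H^{-1/2}(-L_{g_0})h$ shows that $h\mapsto H^{1/2}h$ carries $\ker_{L^2}(L_{g_0})$ isomorphically onto $\ker(\Id-A)$. Hence for $h$ orthogonal to $\ker_{L^2}(L_{g_0})$ — in particular for $h\in L_{g_0}(C^{\infty}_0(S^2T^*M))$ — the vector $H^{1/2}h$ lies in $\ker(\Id-A)^{\perp}$, so $(R_-h,h)_{L^2(g_0)}=(AH^{1/2}h,H^{1/2}h)_{L^2(g_0)}\leq(1-\epsilon)(Hh,h)_{L^2(g_0)}$. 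Therefore $(-L_{g_0}h,h)_{L^2(g_0)}=(Hh,h)_{L^2(g_0)}-(R_-h,h)_{L^2(g_0)}\geq\epsilon(Hh,h)_{L^2(g_0)}\geq\epsilon(-\Delta_{g_0}h,h)_{L^2(g_0)}$, and $\alpha_{g_0}:=\min\{\epsilon,1\}$ does the job.

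I expect the compactness of $A$ to be the main obstacle, since this is the one step that genuinely exploits the noncompact ALE structure rather than formal spectral algebra: one must justify the functional-analytic framework (essential self-adjointness of $H$ and $-L_{g_0}$ on the tensor bundle, the precise domain of $H^{-1/2}$, and the applicability of Carron's resolvent and heat-kernel bounds to the rough Laplacian on $S^2T^*M$). A secondary but necessary point is to control the decay of $L^2$-eigentensors so that the ground-state transform $h\mapsto H^{1/2}h$ honestly preserves membership in $L^2$ and the kernel identification is a genuine isomorphism.
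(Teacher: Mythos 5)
Your proposal is correct and follows essentially the same route as the paper's own sketch: the decomposition $-L_{g_0}=H-R_-$ with $H=-\Delta_{g_0}+R_+$, the Birman--Schwinger-type operator $A=H^{-1/2}R_-H^{-1/2}$, its compactness via Carron's results and the curvature decay, the spectral gap on $\ker(\Id-A)^{\perp}$, and the identification of $\ker(\Id-A)$ with $H^{1/2}\ker_{L^2}(L_{g_0})$. The extra details you supply (triviality of $\ker_{L^2}H$ via parallelism and infinite volume, and the explicit intertwining identity $(\Id-A)H^{1/2}h=H^{-1/2}(-L_{g_0})h$) are consistent elaborations of the same argument, and you correctly flag the compactness of $A$ as the step carrying the real analytic content.
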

\begin{theo}\label{theo-unif-bound-strict-pos}
	Let $(M^n,g_0)$ be a linearly stable ALE Ricci-flat manifold which is integrable. Furthermore, let $\widetilde{\mathcal{F}}$ be as in (\ref{def-tilde-F}). Then there exists a constant $\alpha_{g_0}>0$ such that 
	\begin{eqnarray*}
		(-{L}_{g,g_0}h,h)_{L^2(g)}\geq \alpha_{g_0} \left\|\nabla^{g} h\right\|^2_{L^2(g)},
	\end{eqnarray*}
	for all $g\in \widetilde{\mathcal{F}}$ and $h\in L_{g_0(t),g_0}(C^{\infty}_{0}(S^2T^{*}M))$ provided that $\widetilde{\mathcal{F}}$ is chosen small enough.
\end{theo}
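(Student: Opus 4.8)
The plan is to deduce the estimate from the strong positivity of $-L_{g_0}$ (Theorem \ref{Dev-Str-Pos}) by a perturbation argument, treating $L_{g,g_0}$ for $g\in\widetilde{\mathcal{F}}$ near $g_0$ as a small perturbation of $L_{g_0}$ on the space obtained by completing $C^{\infty}_0(S^2T^*M)$ with respect to $\|\nabla^{g_0}\cdot\|_{L^2(g_0)}$. Two effects must be controlled: the change of the quadratic form $h\mapsto(-L_{g,g_0}h,h)_{L^2(g)}$, and the change of the constraint, since Theorem \ref{Dev-Str-Pos} requires $L^2(g_0)$-orthogonality to $\ker_{L^2}(L_{g_0})$ whereas here $h$ is $L^2(g)$-orthogonal to $\ker_{L^2}(L^*_{g,g_0})$. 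Throughout I would use the Hardy inequality (Theorem \ref{Hardy-Ineq}), valid since $g$ and $g_0$ are Ricci-flat with Euclidean volume growth, to absorb weighted zeroth-order terms $\|r^{-1}h\|_{L^2}$ into $\|\nabla h\|_{L^2}$, and the fact that $\widetilde{\mathcal{F}}$ is finite-dimensional (Theorem \ref{analyticset}): since $g-g_0$ depends real-analytically on finitely many parameters, $g\to g_0$ in $L^2\cap L^{\infty}$ forces $g-g_0\to 0$ in every weighted norm, so that $\Rm(g)-\Rm(g_0)$ and $\Gamma(g)-\Gamma(g_0)$ become small in the weighted sup-norms $\|r^2\cdot\|_{L^{\infty}}$ and $\|r\cdot\|_{L^{\infty}}$, and $\AVR(g)\to\AVR(g_0)$ so the Hardy constant is uniform.

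First I would compare the two quadratic forms. Integrating by parts and using $\Ric(g)=0$ for $g\in\widetilde{\mathcal{F}}$,
\[
(-L_{g,g_0}h,h)_{L^2(g)}=\|\nabla^g h\|^2_{L^2(g)}-2\int_M\langle\Rm(g)\ast h,h\rangle_g\,d\mu_g+\int_M\langle\Li_{\langle h,\Gamma(g)-\Gamma(g_0)\rangle}g,h\rangle_g\,d\mu_g,
\]
whereas $(-L_{g_0}h,h)_{L^2(g_0)}=\|\nabla^{g_0}h\|^2_{L^2(g_0)}-2\int_M\langle\Rm(g_0)\ast h,h\rangle_{g_0}\,d\mu_{g_0}$. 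The differences of the principal terms come from $\nabla^g-\nabla^{g_0}=\Gamma(g)-\Gamma(g_0)$, from the contraction metrics, and from $d\mu_g$ versus $d\mu_{g_0}$; each is bounded by $\omega(g)\|\nabla^{g_0}h\|^2_{L^2(g_0)}$ after one application of Hardy, with $\omega(g)\to 0$. The curvature difference is controlled by $\|r^2(\Rm(g)-\Rm(g_0))\|_{L^{\infty}}\|r^{-1}h\|^2_{L^2}$, and the Lie-derivative term, after integration by parts, by $\|r(\Gamma(g)-\Gamma(g_0))\|_{L^{\infty}}\|r^{-1}h\|_{L^2}\|\nabla h\|_{L^2}$, both again of size $\omega(g)\|\nabla^{g_0}h\|^2_{L^2(g_0)}$. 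This gives $(-L_{g,g_0}h,h)_{L^2(g)}\geq(-L_{g_0}h,h)_{L^2(g_0)}-\omega(g)\|\nabla^{g_0}h\|^2_{L^2(g_0)}$.

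The main obstacle is the constraint mismatch. Writing $h=h_c+h_s$ with $h_c$ the $L^2(g_0)$-projection onto $\ker_{L^2}(L_{g_0})$ and $h_s=h-h_c$, self-adjointness of $L_{g_0}$ and $L_{g_0}h_c=0$ give $(-L_{g_0}h,h)_{L^2(g_0)}=(-L_{g_0}h_s,h_s)_{L^2(g_0)}\geq\alpha_{g_0}\|\nabla^{g_0}h_s\|^2_{L^2(g_0)}$ by Theorem \ref{Dev-Str-Pos}. It then remains to show $\|h_c\|_{L^2}\leq\epsilon(g)\|\nabla^{g_0}h\|_{L^2}$ with $\epsilon(g)\to 0$. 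Fixing an $L^2(g_0)$-orthonormal basis $f_1,\dots,f_m$ of $\ker_{L^2}(L_{g_0})$ and, via Lemma \ref{Fred-Prop-Lap-Mod} and Fredholm perturbation theory, a basis $\tilde f_1,\dots,\tilde f_m$ of $\ker_{L^2}(L^*_{g,g_0})$ with $\tilde f_i\to f_i$, set $\eta_i=\tilde f_i-f_i$. From $(h,\tilde f_i)_{L^2(g)}=0$ one gets
\[
(h,f_i)_{L^2(g_0)}=\big[(h,f_i)_{L^2(g_0)}-(h,f_i)_{L^2(g)}\big]-(h,\eta_i)_{L^2(g)},
\]
and the two terms are bounded by $\|r^{-1}h\|_{L^2}$ times, respectively, $C\|g-g_0\|_{L^{\infty}}\|rf_i\|_{L^2}$ and $\|r\eta_i\|_{L^2}$, hence by $\epsilon(g)\|\nabla^{g_0}h\|_{L^2}$ through Hardy. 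The crux is $\|r\eta_i\|_{L^2}\to 0$: this requires the cokernel elements to decay like $O(r^{-(n+1)})$, which I expect to follow from the transverse-traceless structure exploited in Remark \ref{lin-decay} together with the weighted continuity of $\ker_{L^2}(L^*_{g,g_0})$ in $g$. Establishing this decay (so that $\|r\cdot\|_{L^2}$ is even finite, which fails for the generic rate $r^{-(n-1)}$ when $n=3,4$) uniformly in $g$ is the delicate point. Granting it, $\|h_c\|^2_{L^2}=\sum_i|(h,f_i)_{L^2(g_0)}|^2\leq m\,\epsilon(g)^2\|\nabla^{g_0}h\|^2_{L^2}$, and since all norms on the finite-dimensional space $\ker_{L^2}(L_{g_0})$ are equivalent, $\|\nabla^{g_0}h_c\|_{L^2}\leq C\|h_c\|_{L^2}\leq C\epsilon(g)\|\nabla^{g_0}h\|_{L^2}$, so $\|\nabla^{g_0}h_s\|_{L^2}\geq(1-C\epsilon(g))\|\nabla^{g_0}h\|_{L^2}$.

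Combining the three steps yields
\[
(-L_{g,g_0}h,h)_{L^2(g)}\geq\big[\alpha_{g_0}(1-C\epsilon(g))^2-\omega(g)\big]\|\nabla^{g_0}h\|^2_{L^2(g_0)},
\]
and, by the $L^{\infty}$-equivalence of $g$ and $g_0$, $\|\nabla^{g_0}h\|^2_{L^2(g_0)}\geq(1-C\|g-g_0\|_{L^{\infty}})\|\nabla^{g}h\|^2_{L^2(g)}$. Shrinking $\widetilde{\mathcal{F}}$ so that $\epsilon(g)$, $\omega(g)$ and $\|g-g_0\|_{L^{\infty}}$ are sufficiently small, the resulting factor stays bounded below by a fixed positive constant $\alpha$ (which can be taken arbitrarily close to $\alpha_{g_0}$), giving $(-L_{g,g_0}h,h)_{L^2(g)}\geq\alpha\|\nabla^{g}h\|^2_{L^2(g)}$ as required.
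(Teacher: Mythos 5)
Your overall strategy coincides with the paper's: both start from the strong positivity of $-L_{g_0}$ on $\ker_{L^2}(L_{g_0})^{\perp}$ (Theorem \ref{Dev-Str-Pos}), compare the quadratic forms of $L_{g,g_0}$ and $L_{g_0}$ by a perturbation argument in which the Hardy inequality and a weighted elliptic estimate of the type $\|r\nabla^{g_0}(g-g_0)\|_{L^{\infty}}\leq C\|g-g_0\|_{L^2}^{1-\sigma}\|g-g_0\|_{L^{\infty}}^{\sigma}$ absorb all error terms into $\|\nabla^{g_0}h\|^2_{L^2}$, and then correct for the mismatch between $\ker_{L^2}(L_{g_0})^{\perp}$ and $\ker_{L^2}(L^*_{g,g_0})^{\perp}$. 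The paper runs this last step in the opposite direction, via the map $\Phi_g:h\mapsto h-\sum_i(h,e_i(g))_{L^2(g)}e_i(g)$ from $\ker_{L^2}(L_{g_0})^{\perp}$ onto $\ker_{L^2}(L^*_{g,g_0})^{\perp}$, showing that $\Phi_g$ changes neither the Dirichlet energy nor the quadratic form by more than a factor close to $1$; your decomposition $h=h_c+h_s$ is the dual formulation and is in itself legitimate. (A minor difference: you invoke the finite-dimensionality of $\widetilde{\mathcal{F}}$ to get smallness of $\Gamma(g)-\Gamma(g_0)$ and $\Rm(g)-\Rm(g_0)$ in weighted sup-norms, whereas the paper derives the needed bound quantitatively from weighted elliptic regularity applied to the static equation satisfied by $g-g_0$; both are acceptable.)

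The genuine gap is the one you flag yourself. Your pairing $|(h,\eta_i)_{L^2(g)}|\leq\|r^{-1}h\|_{L^2}\,\|r\eta_i\|_{L^2}$ requires the weighted quantity $\|r(\tilde f_i-f_i)\|_{L^2}$ to be finite and to tend to $0$, which forces the elements of $\ker_{L^2}(L^*_{g,g_0})$ to decay strictly faster than $r^{-(n+2)/2}$ and their differences from the $f_i$ to converge in that weighted norm. Remark \ref{lin-decay} yields the decay $r^{-n-1}$ only for $\ker_{L^2}(L_{g_0})$, and its mechanism --- the transverse-traceless structure from Lemma \ref{tttensors} --- does not transfer to the adjoint operator $L^*_{g,g_0}$; the decay actually established in Lemma \ref{Fred-Prop-Lap-Mod} concerns $\ker L_{g,g_0}$ and is only $r^{-(n-1)}$, which misses your threshold at $n=4$. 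So as written this step is unproven. The paper avoids asking for a weighted bound on the basis difference: it pairs $h$ against $e_i-f_i$ through the Sobolev inequality, $|(h,e_i-f_i)_{L^2}|\leq\|h\|_{L^{2n/(n-2)}}\|e_i-f_i\|_{L^{q}}\leq C\|\nabla^g h\|_{L^2}\|e_i-f_i\|_{L^{q}}$ with $q=2n/(n+2)$, so that only unweighted $L^q$-smallness of $e_i-f_i$ is needed, and this is supplied by the smooth dependence of the cokernel basis on $g$ coming from Lemma \ref{Fred-Prop-Lap-Mod} and Fredholm perturbation theory. Replacing your Hardy pairing by this Sobolev pairing in the two offending terms (and likewise bounding $|(h,f_i)_{L^2(g_0)}-(h,f_i)_{L^2(g)}|$ by $\|h\|_{L^{2n/(n-2)}}$ times an $L^q$-norm of $(g-g_0)\ast f_i$) closes the gap, after which your argument reproduces the paper's proof.
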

\begin{proof}
	By Theorem \ref{Dev-Str-Pos}, there exists a constant $\alpha_0>0$ such that
	\begin{eqnarray*}
		(-L_{g_0}h,h)_{L^2({g_0})}\geq \alpha_0 \left\|\nabla^{g_0} h\right\|_{L^2({g_0})},
	\end{eqnarray*} for any compactly supported $h\in \ker_{L^2}(L_{g_0})^{\perp}$.
	Now by Taylor expansion, with $k=g-g_0$
	\begin{align*}
	&(-{L}_{g,g_0}h,h)_{L^2({g})}=	(-L_{g_0}h,h)_{L^2({g_0})}-\int_0^1\frac{d}{dt}({L}_{g_0+tk,g_0}h,h)_{L^2(g_0+tk)}dt\\
	&\geq \alpha_0  \left\|\nabla^{g_0} h\right\|_{L^2({g_0})}^2+\int_0^1 [(\nabla^{g_0,2}h*k+\nabla^{g_0} h*\nabla^{g_0} k+h*\nabla^{g_0,2} k)*h+Rm*h*h*k] \,d\mu_{g_0+tk}dt\\
	&= \alpha_0  \left\|\nabla^{g_0} h\right\|_{L^2({g_0})}^2+\int_0^1 [\nabla^{g_0} h*h*\nabla^{g_0} k+\nabla^{g_0} h*\nabla^{g_0} h* k+Rm*h*h*k] \,d\mu_{g_0+tk}dt\\
	&\geq  \alpha_0  \left\|\nabla^{g_0} h\right\|_{L^2({g_0})}^2- C \left\|\nabla^{g_0} h\right\|_{L^2({g_0})}^2\left\| k\right\|_{L^{\infty}(g_0)}- C \left\|\nabla^{g_0} h\right\|_{L^2({g_0})}\left\|h*\nabla^{g_0} k\right\|_{L^2({g_0})}\\
	&\geq  \alpha_0  \left\|\nabla^{g_0} h\right\|_{L^2({g_0})}^2- C \left\|\nabla^{g_0} h\right\|_{L^2({g_0})}^2\left\| k\right\|_{L^{\infty}(g_0)}^{\sigma},
	\end{align*}
	for some $\sigma\in (0,1)$.
	To justify the last inequality, we use elliptic regularity and Sobolev embedding as in the proof of Theorem \ref{ell-reg-prop} to obtain
	\begin{align*}
	\left\|r\nabla^{g_0} k\right\|_{L^{\infty}({g_0})}\leq\left\|k\right\|_{C^{1,\alpha}_{0}({g_0})}\leq C\left\|k\right\|_{W^{2,p}_{0}({g_0})}\leq  C\left\|k\right\|_{L^{p}({g_0})}\leq C\left\|k\right\|_{L^{2}({g_0})}^{1-\sigma}\left\|k\right\|_{L^{\infty}({g_0})}^{\sigma},
	\end{align*}
	with $\sigma=1-\frac{2}{p}$ and $p>n$. This can be combined with the Hardy inequality (\ref{Hardy-Ineq}) to obtain
	\begin{align*}
	\left\|h*\nabla^{g_0} k\right\|_{L^2({g_0})}\leq &	\left\|r^{-1}h\right\|_{L^2({g_0})}	\left\|r\nabla^{g_0} k\right\|_{L^{\infty}({g_0})}
	\leq  C\left\|\nabla^{g_0} h\right\|_{L^2({g_0})}\left\|k\right\|_{L^{\infty}(g_0)}^{\sigma},
	\end{align*}
	which yields the estimate of the theorem for $h\in \ker_{L^2}(L_{g_0})^{\perp}$, provided that $\left\| k\right\|_{L^{\infty}(g_0)}$ is small enough. To pass to $h\in \ker_{L^2}({L}_{g,g_0}^*)^{\perp}$, we note that an isomorphism between $\ker_{L^2}(L_{g_0})^{\perp}$ and $ \ker_{L^2}({L}_{g,g_0}^*)^{\perp}$ is given by 
	$\Phi_g:h\mapsto h-\sum_i(h,e_i(g))_{L^2(g)}\cdot e_i(g)$ where the tensors $e_i(g)$ are an orthonormal basis of $\ker_{L^2}({L}_{g,g_0}^*)$. At first, we have
\begin{align*}
\nabla^g\Phi_g(h)=\nabla^gh-\sum_i(h,e_i(g))_{L^2(g)}\cdot \nabla^g e_i(g),
	\end{align*}
	from which we conclude, using integration by parts
	\begin{equation}\begin{split}\label{nabla-est}
	\left\|\nabla^g \Phi_g(h)\right\|^2_{L^2(g)}&=\left\|\nabla^g h\right\|^2_{L^2(g)}
	-2(h,e_i)_{L^2(g)}(\Delta_g e_i,h)_{L^2(g)}+(h,e_i)_{L^2(g)}^2(\Delta_g e_i,e_i)_{L^2(g)}\\
	&\leq C\left\|\nabla^g h\right\|^2_{L^2(g)}
	\leq C\left\|\nabla^{g_0} h\right\|^2_{L^2(g_0)}.
	\end{split}
	\end{equation}
	The first inequality here can be proven as follows: Because $e_i=\textit{O}(r^{-n+1})$ as $r\to\infty$ (c.f.\ Theorem \ref{ell-reg-prop}), we have
	\begin{align*}
(h,e_i)_{L^2(g)}\leq \left\| r^{-1}h\right\|_{L^2(g)}\left\|r e_i\right\|_{L^2(g)}\leq C\left\| \nabla^gh\right\|_{L^2(g)}
\end{align*}
due to the Hardy inequality
The same argument also yields
\begin{align*}
(\Delta_g e_i,h)_{L^2(g)}\leq  C\left\| \nabla^gh\right\|_{L^2(g)},
\end{align*}
because $\Delta_ge_i=\textit{O}(r^{-n-1})$ as $r\to\infty$ (c.f.\ Theorem \ref{ell-reg-prop} again). 	This justifies the first inequality from above. 
For the second inequality, we write
\begin{align*}
\nabla^gh=\nabla^{g_0}h+(\Gamma(g_0)-\Gamma(g))*h,
\end{align*}
and $(\Gamma(g_0)-\Gamma(g))=\textit{O}(r^{-n})$, see Theorem \ref{ell-reg-prop}.
Again by the Hardy inequality,
\begin{align*}
\left\|\nabla^gh\right\|_{L^2(g)}^2&\leq C(\left\|\nabla^{g_0}h\right\|_{L^2(g)}^2+\left\|(\Gamma(g_0)-\Gamma(g))*h\right\|_{L^2(g)}^2)\\
&\leq C(\left\|\nabla^{g_0}h\right\|_{L^2(g_0)}^2+\left\|r^{-1}h\right\|_{L^2(g_0)}^2)\leq C \left\|\nabla^{g_0}h\right\|_{L^2(g_0)}^2.
\end{align*}
Here, we use the smallness of $\left\|g-g_0\right\|_{L^{\infty}(g_0)}$ to compare the corresponding $L^2$-norms. 
%
	To finish the proof, it remains to show that the inequality $$(-{L}_{g,g_0} \Phi_g(h),\Phi_g(h))_{L^2(g)}\geq C\cdot (-{L}_{g,g_0} h,h)_{L^2(g)}$$ holds for some constant $C>0$. We compute
	\begin{align*}
	(-{L}_{g,g_0}\Phi_g(h),\Phi_g(h))_{L^2(g)}	
	&=(-{L}_{g,g_0}h,h)_{L^2(g)}+\sum_i (h,e_i)_{L^2(g)}(-{L}_{g,g_0}e_i,h)_{L^2(g)}\\
	&\geq(-{L}_{g,g_0}h,h)_{L^2(g)}-\sum_i \delta' \left\| \nabla^g h\right\|_{L^2(g)}\left\|{L}_{g,g_0}e_i\right\|_{L^{n/2}(g)}\left\|h\right\|_{L^{2n/n-2}(g)}
	\\
	&\geq (-{L}_{g,g_0}h,h)_{L^2(g)}- \delta \left\| \nabla^g h\right\|_{L^2(g)}^2\\
	&\geq (1-C\delta)(-{L}_{g,g_0}h,h)_{L^2(g)},
	\end{align*}
	where we also used the Sobolev inequality and elliptic regularity.
\end{proof}
\subsection{Existence for all time and convergence}\label{Exi-conv}
\begin{prop}\label{L2-bound}
	Let $(M,g_0)$ be a linearly stable ALE Ricci-flat manifold which satisfies the integrability condition.
	Then there exists an $\epsilon>0$ with the following property: If $(g(t))_{t\in[0,T]}$ is a Ricci-DeTurck flow and $T$ a time such that $\left\|g(t)-g_0\right\|_{L^{\infty}}<\epsilon$ for all $t\in [0,T]$, then there exists a constant such that the evolution inequality
	\[ \frac{d}{dt}\left\|h-h_0\right\|_{L^2(g_0(t))}^2+C\left\|\nabla^{g_0(t)}(h-h_0)\right\|_{L^2(g_0(t))}^2\leq0,
	\]
	holds.
\end{prop}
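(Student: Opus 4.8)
The plan is to set $w:=h-h_0=g(t)-g_0(t)$ and to differentiate $\|w\|_{L^2(g_0(t))}^2$ directly, carefully tracking that the reference metric $g_0(t)$ moves in time. Recall from the expansion preceding the statement that
$$\partial_t w=L_{g_0(t),g_0}w+R[w]-\partial_t h_0,$$
where $R[w]=F*\nabla^{g_0(t)}w*\nabla^{g_0(t)}w+\nabla^{g_0(t)}(G*w*\nabla^{g_0(t)}w)$ and $\partial_t h_0=\partial_t g_0(t)\in T_{g_0(t)}\widetilde{\mathcal{F}}=\ker L_{g_0(t),g_0}$. Using $\partial_t\,d\mu_{g_0(t)}=\tfrac12\tr_{g_0(t)}(\partial_t h_0)\,d\mu_{g_0(t)}$ and $\partial_t(g_0(t)^{-1})=-g_0(t)^{-1}*\partial_t h_0*g_0(t)^{-1}$, I would obtain
$$\frac{d}{dt}\|w\|_{L^2(g_0(t))}^2=2(L_{g_0(t),g_0}w,w)_{L^2(g_0(t))}+2(R[w],w)_{L^2(g_0(t))}-2(\partial_t h_0,w)_{L^2(g_0(t))}+\mathcal{E},$$
where $\mathcal{E}$ collects the terms in which $\partial_t h_0$ is contracted against $w*w$. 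Finiteness of $\|\nabla^{g_0(t)}w\|_{L^2}$ is guaranteed by the short-time $H^k$-estimates of Lemma \ref{Hk-estimate}.

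The first term is the good one. Since $w\in\overline{L_{g_0(t),g_0}(C^\infty_0(S^2T^*M))}$, a density argument together with the uniform coercivity of Theorem \ref{theo-unif-bound-strict-pos}, applied with $g=g_0(t)\in\widetilde{\mathcal{F}}$, gives
$$2(L_{g_0(t),g_0}w,w)_{L^2(g_0(t))}\leq-2\alpha_{g_0}\|\nabla^{g_0(t)}w\|_{L^2(g_0(t))}^2.$$
It remains to absorb all the other terms into a fraction of this negative quantity once $\epsilon$ is small. For the remainder I would integrate by parts in the divergence term,
$$(R[w],w)_{L^2}=\int_M\langle F*\nabla^{g_0(t)}w*\nabla^{g_0(t)}w,w\rangle\,d\mu-\int_M\langle G*w*\nabla^{g_0(t)}w,\nabla^{g_0(t)}w\rangle\,d\mu,$$
so that, since $F$ and $G$ are bounded (they depend only on $g^{-1}$ and $\Gamma(g_0)$, with $g$ close to $g_0$) and $\|w\|_{L^\infty}\leq C\epsilon$, one gets $|(R[w],w)_{L^2}|\leq C\epsilon\,\|\nabla^{g_0(t)}w\|_{L^2}^2$.

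For the cross term and for $\mathcal{E}$ I would invoke Lemma \ref{est_g_0}, which yields $\|\partial_t h_0\|_{C^0(g_0(t))}+\|\partial_t h_0\|_{L^2(g_0(t))}\leq C\|\nabla^{g_0(t)}w\|_{L^2(g_0(t))}^2$ (finite-dimensionality of $T_{g_0(t)}\widetilde{\mathcal{F}}$, together with the $r^{-(n-1)}$-decay of its elements, makes these norms comparable). Hence, using that the flow stays in the $L^2\cap L^\infty$-neighbourhood $\mathcal{U}$, so that $\|w\|_{L^2}\leq C\epsilon$,
$$2|(\partial_t h_0,w)_{L^2}|\leq 2\|\partial_t h_0\|_{L^2}\|w\|_{L^2}\leq C\epsilon\,\|\nabla^{g_0(t)}w\|_{L^2}^2,\qquad |\mathcal{E}|\leq C\|\partial_t h_0\|_{L^\infty}\|w\|_{L^2}^2\leq C\epsilon^2\,\|\nabla^{g_0(t)}w\|_{L^2}^2.$$
Collecting everything gives $\frac{d}{dt}\|w\|_{L^2(g_0(t))}^2\leq(-2\alpha_{g_0}+C\epsilon)\|\nabla^{g_0(t)}w\|_{L^2}^2$, and choosing $\epsilon$ so small that $C\epsilon\leq\alpha_{g_0}$ yields the claimed inequality with $C=\alpha_{g_0}$.

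The heart of the matter, and the \emph{main obstacle}, is the cross term $-2(\partial_t h_0,w)_{L^2}$, i.e.\ the error produced by the drift of the reference metric. A priori $\partial_t h_0$ is only known to lie in the finite-dimensional space $\ker L_{g_0(t),g_0}$, which is merely close to, but not equal to, $(\,\overline{\ima L_{g_0(t),g_0}}\,)^\perp=\ker L^*_{g_0(t),g_0}$ because $L_{g_0(t),g_0}$ fails to be self-adjoint; so no exact orthogonality with $w$ is available and Cauchy--Schwarz is forced upon us. This is precisely why Lemma \ref{est_g_0} was established: the quadratic bound $\|\partial_t h_0\|\leq C\|\nabla^{g_0(t)}w\|_{L^2}^2$ makes the cross term genuinely higher order, and combined with the smallness of $\|w\|_{L^2}$ it reduces to a harmless multiple of $\|\nabla^{g_0(t)}w\|_{L^2}^2$. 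I would pay particular attention to the passage from the compactly supported tensors in Theorem \ref{theo-unif-bound-strict-pos} to the closure element $w$, which requires the coercivity constant $\alpha_{g_0}$ to be uniform over $g_0(t)\in\widetilde{\mathcal{F}}$.
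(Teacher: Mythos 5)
Your proposal is correct and follows essentially the same route as the paper: differentiate $\|h-h_0\|_{L^2(g_0(t))}^2$, use the uniform coercivity of Theorem \ref{theo-unif-bound-strict-pos} on the image-closure component, integrate the divergence part of $R[h-h_0]$ by parts to get a $C\epsilon\|\nabla^{g_0(t)}(h-h_0)\|_{L^2}^2$ bound, and control the drift terms via the quadratic estimate of Lemma \ref{est_g_0} together with smallness of $\|h-h_0\|_{L^2}$. Your identification of the cross term with $\partial_t h_0$ as the key difficulty, resolved by Lemma \ref{est_g_0} rather than by any orthogonality, matches the paper's argument exactly.
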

\begin{proof}
	We know that
	\begin{eqnarray*}
		\partial_t h-{L}_{g_0(t),g_0}(h-h_0)=R[h-h_0],
	\end{eqnarray*}
	where
	\begin{eqnarray*}
		R[h-h_0]=F*\nabla^{g_0(t)}(h-h_0)*\nabla^{g_0(t)}(h-h_0)+\nabla^{g_0(t)}(G*(h-h_0)*\nabla^{g_0(t)}(h-h_0)).
	\end{eqnarray*}
	Thanks to Theorem \ref{theo-unif-bound-strict-pos} and Lemma \ref{est_g_0},
	\begin{align*}
	\partial_t&\|h-h_0\|_{L^2(g_0(t))}^2=2({L}_{g_0(t),g_0}(h-h_0),h-h_0)_{L^2(g_0(t))}+(R[h-h_0],h-h_0)_{L^2(g_0(t))}  \\& +(h-h_0,\partial_t h_0(t))_{L^2(g_0(t))}+\int_M (h-h_0)*(h-h_0)*\partial_t h_0(t)d\mu_{g_0(t)}\\
	&\leq -2\alpha_{g_0} \left\|\nabla^{g_0(t)}(h-h_0)\right\|_{L^2(g_0(t))}^2+C\left\|(h-h_0)\right\|_{L^{\infty}(g_0(t))}\left\|\nabla^{g_0(t)}(h-h_0)\right\|_{L^2(g_0(t))}^2\\
	& \quad+ 	\left\|\partial_th_0\right\|_{L^{2}(g_0(t))}\left\|h-h_0\right\|_{L^2(g_0(t))}\\
	&\leq (-2\alpha_{g_0}+C\cdot \epsilon )\left\|\nabla^{g_0(t)}(h-h_0)\right\|_{L^2(g_0(t))}^2,
	\end{align*}
	which proves the desired estimate.
\end{proof}

\begin{proof}[Proof of Theorem \ref{main-theo}]
	Let  $\epsilon>0$ be so small that Proposition \ref{L2-bound} holds, provided that $\left\|h\right\|_{L^{\infty}(g_0)}<\epsilon$.
	By Lemma \ref{Ck-estimate}, we can find $\delta_1>0$ so small that the above $\epsilon$-bound holds as long as $h\in\mathcal{B}_{L^2\cap L^{\infty}}(0,\delta_1)$.
	Now let  $\delta_2>0$ be so small that $h(1)\in 
	\mathcal{B}_{L^2\cap L^{\infty}}(0,\delta_1)$ if $h(0)\in \mathcal{B}_{L^2\cap L^{\infty}}(0,\delta_2)$ where $\delta_2=\delta_2(\epsilon)$ will be chosen below.	
	Suppose that $T_{max}\geq 1$ is the first time where $h(t)$ leaves $\mathcal{B}_{L^2\cap L^{\infty}}(0,\epsilon)$. 
	By Lemma \ref{est_g_0}, Proposition \ref{L2-bound} and elliptic regularity,
	\begin{align*}
	\left\|h_0(T_{max})\right\|_{L^{2}(g_0)}+&\left\|h_0(T_{max})\right\|_{L^{\infty}(g_0)}\leq C\int_1^{T_{max}}\left\|\partial_th_0(t)\right\|_{L^{2}(g_0(t))}dt\\&
	\leq C\int_1^{T_{max}}\left\|\nabla^{g_0(t)} (h(t)-h_0(t))\right\|_{L^{2}(g_0(t))}^2dt\\
	&\leq C\left\|h(1)-h_0(1)\right\|_{L^{2}(g_0)}^2\leq C\left\|h(1)\right\|_{L^{2}(g_0)}^2\leq C\cdot (\delta_1)^2.
	\end{align*}	
	Furthermore by Proposition \ref{L2-bound},
	\begin{align*}
	\left\|h(T_{max})-h_0(T_{max})\right\|_{L^{2}(g_0)}\leq 	\left\|h(1)-h_0(1)\right\|_{L^{2}(g_0)}\leq C\cdot \delta_1.
	\end{align*}
	Again by Proposition \ref{L2-bound}, Lemma \ref{Ck-estimate} and interpolation,
	\begin{align*}
	&\left\|h(T_{max})-h_0(T_{max})\right\|_{L^{\infty}(g_0)}\\&\qquad\leq 	C\left\|\nabla^{g_0}(h(T_{max})-h_0(T_{max}))\right\|_{L^{\infty}(g_0)}^{1-\alpha}\cdot \left\|h(T_{max})-h_0(T_{max})\right\|_{L^{2}(g_0)}^{\alpha}\\
	&\qquad\leq C\cdot \epsilon^{1-\alpha}\left\|h(1)-h_0(1)\right\|_{L^{2}(g_0)}^{\alpha}\leq C\cdot \epsilon^{1-\alpha}(\delta_1)^{\alpha},
	\end{align*}
	with $\alpha=\frac{n}{n+2}$.
	By the triangle inequality,
	\begin{align*}
	\left\|h(T_{max})\right\|_{L^{2}(g_0)}+\left\|h(T_{max})\right\|_{L^{\infty}(g_0)}&\leq C\cdot (\delta_1)^2+C\cdot \delta_1+ C\cdot \epsilon^{1-\alpha}(\delta_1)^{\alpha}\leq \epsilon/2,
	\end{align*}
	provided that $\delta_1>0$ was chosen small enough. We now have proven that such a $T_{max}$ can not exist and that $h(t)\in \mathcal{U}$ for all $T>0$.
	
	Moreover, because
	\begin{align*}
	\int_1^{\infty}\left\|\partial_th_0(t)\right\|_{L^{2}(g_0(t))}dt
	&\leq C\int_1^{\infty}\left\|\nabla^{g_0(t)} (h(t)-h_0(t))\right\|_{L^{2}(g_0(t))}^2dt\\&\leq C\left\|h(1)-h_0(1)\right\|_{L^{2}(g_0)}^2<\infty,
	\end{align*}
	and since
	\begin{align*}
	\left|\partial_t \left\|\nabla^{g_0(t)}(h(t)-h_0(t))\right\|^2_{L^2(g_0(t))}\right|\leq C \left\|h(t)-h_0(t)\right\|^2_{H^{3}(g_0)}\leq C,
	\end{align*}
	by Lemma \ref{Hk-estimate},	we get 
	$$\limsup_{t\rightarrow+\infty}\left\|\partial_th_0(t)\right\|_{L^{2}(g_0(t))}\leq \limsup_{t\rightarrow+\infty}\left\|\nabla^{g_0(t)} (h(t)-h_0(t))\right\|_{L^{2}(g_0(t))}^2= 0,$$ and $h_0(t)$ converges in $L^2$ to some limit $h_0(\infty)$. Due to elliptic regularity, $(h_0(t))_{t\geq 0}$ converges to  $h_0(\infty)$ as $t$ goes to $+\infty$ with respect to all Sobolev norms. We are going to show now that $(g(t))_{t\geq 0}$ converges to $g_0+h_0(\infty)=: g_{\infty}$ as $t$ goes to $+\infty$ with respect to all $W^{k,p}$-norms with $p>2$.
	For this purpose, it suffices to show that $(h(t)-h_0(t))_{t\geq 0}$ converges to $0$ as $t$ goes to $+\infty$ with respect to all these norms. At first, by the Euclidean Sobolev inequality,
	\begin{align*}
	\left\|h-h_0\right\|_{L^{\frac{2n}{n-2}}(g_0)}\leq C\left\|\nabla^{g_0} (h-h_0)\right\|_{L^2(g_0)}\to0, \quad\mbox{as $t\rightarrow+\infty$,}
	\end{align*}
	which implies that $\lim_{t\rightarrow+\infty}h-h_0=0$ in $L^p$ for all $p\in(2,\infty)$ by interpolation and due to smallness in $L^2\cap L^{\infty}$.
	Moreover, for $j\in\mathbb{N}$ arbitrary, by interpolation inequalities,
	\begin{align*}
	\left\|\nabla^{g_0,j}(h-h_0)\right\|_{L^p(g_0)}\leq C\left\|\nabla^{g_0,m}(h-h_0)\right\|_{L^{\infty}(g_0)}^{\alpha}\left\|h-h_0\right\|_{L^p(g_0)}^{1-\alpha}\leq C\left\|h-h_0\right\|_{L^p(g_0)}^{1-\alpha}\to0,
	\end{align*}
	as $t\rightarrow+\infty$ with $\alpha=\frac{jp}{mp+n}$ and $m\in\mathbb{N}$ so large that $\alpha<1$.
	Due to Sobolev embedding, convergence also holds for $p=\infty$.
	
\end{proof}

\section{Nash-Moser Iteration at infinity}\label{Nash-Moser-Sec}

In this section, we prove a decay of the $L^{\infty}$ norm of the difference between an immortal solution to the Ricci-DeTurck flow with gauge an ALE Ricci-flat metric $g_0$ and the metric $g_0$ itself. More precisely, one has the following theorem:

\begin{theo}\label{theo-Nas-Moser-Iteration}
Let $(M^n,g_0)$ be a complete Riemannian manifold endowed with a Ricci flat metric with quadratic curvature decay at infinity, i.e.
\begin{eqnarray*}
\Ric(g_0)= 0,\quad |\Rm(g_0)|_{g_0}(x)\leq \frac{C}{1+d_{g_0}^2(x_0,x)},\quad x\in M,\quad\AVR(g_0)>0,
\end{eqnarray*}
for some positive constant $C$ and some point $x_0\in M$.
Let $(M^n,g(t))_{t\geq 0}$, be an immortal solution to the Ricci-DeTurck flow with respect to the background metric $g_0$ such that $$\sup_{t\geq 0}\|g(t)-g_0\|_{L^{\infty}(M)}\leq \epsilon,$$ for some positive universal $\epsilon$. Then, for any positive time $t$, and radius $r<\sqrt{t}$,
\begin{eqnarray*}
&&\sup_{P(x_0,t,r/2)}|g(t)-g_0|^2\leq \frac{C(n,g_0,\epsilon)}{r^{n+2}}\int_{P(x_0,t,r)}\arrowvert g(t)-g_0\arrowvert_{g_0}^2(y,s)d\mu_{g_0}(y)ds,\\
&&P(x_0,t,r):=(M\setminus B_{g_0}(x_0,r))\times (t-r^2,t].
\end{eqnarray*}
In particular, if $\sup_{t\geq 0}\|g(t)-g_0\|_{L^2(M)}\leq C<+\infty$, then 
\begin{eqnarray*}
\|g(t)-g_0\|_{L^{\infty}(M\setminus B_{g_0}(x_0,\sqrt{t}))}\leq C(n,g_0,\epsilon)\frac{\sup_{t\geq 0}\|g(t)-g_0\|_{L^2(M)}}{t^{\frac{n}{4}}},\quad t>0.
\end{eqnarray*}

\end{theo}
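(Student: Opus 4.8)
The plan is to establish a parabolic mean value inequality \emph{at infinity} by a De Giorgi--Nash--Moser iteration, and then optimize in the radius $r$. First I would reduce the full nonlinear flow to an energy inequality for $u:=|h|_{g_0}$, where $h:=g(t)-g_0$. Starting from the schematic form \eqref{rdt-expansion2}, namely $\partial_t h=\Delta h+\Rm\ast h+F\ast\nabla h\ast\nabla h+\nabla(G\ast h\ast\nabla h)$, I would compute
\[
\tfrac12\partial_t|h|^2=\tfrac12\Delta|h|^2-|\nabla h|^2+\langle\Rm\ast h,h\rangle+\langle F\ast\nabla h\ast\nabla h,h\rangle+\langle\nabla(G\ast h\ast\nabla h),h\rangle,
\]
keeping the last term in divergence form so that it is integrated by parts inside the energy estimates. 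Since $F,G$ are bounded (they depend only on $g^{-1}$ and $\Gamma(g_0)$) and $\|h\|_{L^{\infty}}\le\epsilon$, the two nonlinear terms each carry a factor $|h|\le\epsilon$ and are pointwise $\le C\epsilon\,|\nabla h|^2$, hence absorbable into $-|\nabla h|^2$ once $\epsilon$ is small. Using Kato's inequality $|\nabla|h||\le|\nabla h|$ together with $\Delta|h|^2=2|h|\Delta|h|+2|\nabla|h||^2$, this shows that $u=|h|$ obeys the energy inequalities of a weak subsolution of $\partial_t u\le\Delta u+V u$, with $V:=C(n)|\Rm(g_0)|_{g_0}\le C/(1+d_{g_0}^2(x_0,\cdot))$. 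On the region $P(x_0,t,r)$ every point satisfies $d_{g_0}(x_0,\cdot)\ge r$, so $V\le C/r^2$ there.

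Next I would run the iteration using two global facts available because $(M,g_0)$ is Ricci-flat with $\AVR(g_0)>0$: the genuinely Euclidean Sobolev inequality $\|\phi\|_{L^{2n/(n-2)}}^2\le C_S\|\nabla\phi\|_{L^2}^2$ for $\phi\in C_0^{\infty}(M)$, and Minerbe's Hardy inequality (Theorem \ref{Hardy-Ineq}). Testing the subsolution inequality with $u^{2p-1}\phi^2$ for a space-time cutoff $\phi$ and $p\ge1$, then integrating in time, produces
\[
\sup_{s}\int(u^p\phi)^2\,d\mu+\iint|\nabla(u^p\phi)|^2\,d\mu\,ds\le Cp^2\iint u^{2p}\big(|\nabla\phi|^2+|\partial_t\phi^2|+V\phi^2\big)\,d\mu\,ds.
\]
The potential contribution $\iint V u^{2p}\phi^2$ is harmless: either one uses $V\le C/r^2$, which is of the same order as the cutoff gradient terms $|\nabla\phi|^2\sim r^{-2}$, or one absorbs it into the Dirichlet energy via Theorem \ref{Hardy-Ineq}; in both cases no scale-critical loss occurs. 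Combining this with the parabolic (Ladyzhenskaya) Sobolev inequality $\iint|\psi|^{2\frac{n+2}{n}}\le C\big(\sup_s\int\psi^2\big)^{2/n}\iint|\nabla\psi|^2$, which follows from $C_S$, upgrades $L^{2p}$ integrability on a cylinder to $L^{2p\gamma}$ on a slightly smaller one, with $\gamma=\tfrac{n+2}{n}>1$.

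Then I would iterate over $p=\gamma^k$ along a nested family of parabolic cylinders interpolating between $P(x_0,t,r)$ and $P(x_0,t,r/2)$, with cutoff gradients of size $\sim 2^k/r$, and sum the resulting geometric series in the standard Moser fashion to obtain
\[
\sup_{P(x_0,t,r/2)}|h|^2\le\frac{C(n,g_0,\epsilon)}{r^{n+2}}\iint_{P(x_0,t,r)}|h|^2\,d\mu_{g_0}\,ds,
\]
the weight $r^{-(n+2)}$ being exactly the reciprocal parabolic volume $|P|\sim r^n\cdot r^2$ coming from Euclidean volume growth in space times the interval length $r^2$ in time; the short-time estimates of Lemma \ref{Ck-estimate} supply the smoothness and local derivative bounds needed to justify the integrations by parts. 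Finally, taking $r\simeq\sqrt t$ and using $\int_{M\setminus B}|h(s)|^2\,d\mu\le\sup_s\|h(s)\|_{L^2(M)}^2$ over the time interval of length $\sim t$ gives $\iint_{P}|h|^2\le C\,t\,\sup_s\|h(s)\|_{L^2}^2$, whence $\sup|h|^2\le C\,t^{-n/2}\sup_s\|h\|_{L^2}^2$; taking square roots yields the stated decay with exponent $n/4$. I expect the main obstacle to be the scale-critical nature of the curvature potential $V\sim r^{-2}$: the whole scheme hinges on the fact that, at infinity, $V$ is uniformly $O(r^{-2})$ on the cylinder (equivalently, tamed by the Hardy inequality) so that the Moser constants do not degenerate through the iteration, and on the availability of a truly Euclidean Sobolev constant $C_S$, guaranteed by $\AVR(g_0)>0$.
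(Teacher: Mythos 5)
Your overall strategy is the paper's: a localized parabolic Moser iteration on the exterior cylinders $P(x_0,t,r)$, using the quadratic curvature decay to make the potential $O(r^{-2})$ there, the Euclidean Sobolev inequality from $\AVR(g_0)>0$, and finally $r\simeq\sqrt t$. The one substantive divergence is your choice to iterate on $u=|h|$ rather than $u=|h|^2$ (which, if it worked, would let you start the iteration at $\|\,|h|\,\|_{L^2}$ and skip the Li--Schoen $L^2\to L^1$ step the paper needs at the very end). However, there is a genuine gap in the reduction step, caused by your choice of the divergence-form expansion \eqref{rdt-expansion2}. The term $\nabla(G*h*\nabla h)$ cannot be absorbed ``once and for all'' into a scalar subsolution inequality $\partial_t u\le\Delta u+Vu$: it must be integrated by parts against the test tensor $|h|^{2p-2}h\,\phi^2$ at \emph{every} level $p$ of the iteration, and since $|\nabla(|h|^{2p-2}h)|\le(2p-1)|h|^{2p-2}|\nabla h|$ this produces a bad term of size $C\epsilon\,p\iint|h|^{2p-2}|\nabla h|^2\phi^2$ involving the \emph{full} gradient $|\nabla h|$. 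The coercive terms you have available are $-\iint|h|^{2p-2}|\nabla h|^2\phi^2$ (coefficient independent of $p$) and $-(2p-2)\iint|h|^{2p-2}\big|\nabla|h|\big|^2\phi^2$; the $p$-proportional one controls only $\big|\nabla|h|\big|$, and Kato's inequality goes the wrong way here. So for $p\gtrsim\epsilon^{-1}$ the absorption fails and the Caccioppoli inequality you need is not established.

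The paper avoids this in two ways simultaneously. First, it runs the iteration on Shi's non-divergence form $\partial_t h=g^{-1}*\nabla^{g_0,2}h+\Rm(g_0)*h+g^{-1}*g^{-1}*\nabla^{g_0}h*\nabla^{g_0}h$, in which the quadratic nonlinearity is pointwise bounded by $C|\nabla^{g_0}h|^2$ and contributes only $C\epsilon\iint|h|^{2p-2}|\nabla h|^2\phi^2$ with no factor of $p$. Second, it sets $u=|h|^2$ and multiplies the differential inequality by $pu^{p-1}$ \emph{before} testing, thereby retaining the coercive term $-p\,u^{p-1}|\nabla^{g_0}h|^2$, which is proportional to $p$ \emph{and} carries the full gradient; this is exactly what absorbs the $p$-proportional error coming from $\div_{g_0}(g^{-1})*\nabla^{g_0}u^p\sim p\,\epsilon\,u^{2p-1}|\nabla h|^2$ when the non-divergence principal part is integrated by parts. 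Your scheme is repairable: keep $u=|h|$ but switch to Shi's form of the equation, in which case every error term carries a factor $C\epsilon$ or $C$ without a factor of $p$ and the coefficient-one full-gradient term suffices; this would in fact give the stated $L^2$ mean-value inequality directly, without the Li--Schoen iteration the paper invokes at the end. The remaining steps of your argument (treatment of the potential, Ladyzhenskaya interpolation, nested cutoffs, and the choice $r\simeq\sqrt t$) match the paper and are correct.
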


\begin{rk}
Notice that $P(x_0,t,r)$ is the parabolic neighborhood of a point at infinity on the manifold $M$. 
\end{rk}
Before starting the proof of this theorem, we remark that by combining Theorem \ref{main-theo} and Theorem \ref{theo-Nas-Moser-Iteration} leads to Theorem \ref{main-theo-bis}.

\begin{proof}[Proof of Theorem \ref{theo-Nas-Moser-Iteration}]
Recall that $(g(t))_{ t\geq 0}$ (or similarly $h(t):=g(t)-g_0$) satisfies the following partial differential equation :
\begin{eqnarray*}
\partial_th&=&g^{-1}\ast\nabla^{g_0,2}h+\Rm(g_0)\ast h+g^{-1}\ast g^{-1}\ast\nabla^{g_0}h\ast\nabla^{g_0}h,\\
g^{-1}\ast\nabla^{g_0,2}h&:=&g^{ij}\nabla^{g_0,2}_{ij}h.
\end{eqnarray*}
 In particular,
 \begin{eqnarray*}
&&\partial_t|h|_{g_0}^2\leq g^{-1}\ast\nabla^{g_0,2}|h|_{g_0}^2-2g^{-1}(\nabla^{g_0}h,\nabla^{g_0}h)+\RP(g_0)|h|_{g_0}^2+c(n)|h|_{g_0}|\nabla^{g_0}h|_{g_0}^2,\\
&&g^{-1}(\nabla^{g_0}h,\nabla^{g_0}h):=g^{ij}\nabla^{g_0}_ih\nabla^{g_0}_jh,\quad \RP(g_0):=c(n,\epsilon)|\Rm(g_0)|_{g_0}.
\end{eqnarray*}
Since $\|h(t)\|_{L^{\infty}(M)}\leq \epsilon$ for any positive time $t$ :
\begin{eqnarray*}
&&\partial_t|h|_{g_0}^2\leq g^{-1}\ast\nabla^{g_0,2}|h|_{g_0}^2-|\nabla^{g_0}h|_{g_0}^2+\RP(g_0)|h|_{g_0}^2,\\
\end{eqnarray*}
if $\epsilon\leq \epsilon(n)$.
Define $u:=|h|_{g_0}^2$ and multiply the previous differential inequality by $pu^{p-1}$ for some real $p\geq 2$ to get :
\begin{eqnarray*}
\partial_tu^p&=&pu^{p-1}\partial_tu\\
&\leq& pu^{p-1}g^{-1}\ast\nabla^{g_0,2}u-pu^{p-1}|\nabla^{g_0}h|_{g_0}^2+p\RP(g_0)u^p\\
&\leq&g^{-1}\ast\nabla^{g_0,2}u^p-pg^{-1}(\nabla^{g_0}u^{p-1},\nabla^{g_0}u)-pu^{p-1}|\nabla^{g_0}h|_{g_0}^2+p\RP(g_0)u^p.\\
\end{eqnarray*}

Take any smooth space-time cutoff function $\psi$ and multiply the previous differential inequality by $\psi^2u^p$ and integrate by parts as follows : 
 \begin{eqnarray}
&&\int_{t- r^2}^{t'}\int_M\psi^2u^p\left[-g^{-1}\ast\nabla^{g_0,2}u^p+pg^{-1}(\nabla^{g_0}u^{p-1},\nabla^{g_0}u)+pu^{p-1}|\nabla^{g_0}h|_{g_0}^2\right]d\mu_{g_0}ds\label{inequ-nas-mos-1}\\
&&\leq \int_{t-r^2}^{t'}\int_M-\psi^2u^p\partial_su^p+p\RP(g_0)\psi^2u^{2p}d\mu_{g_0}ds,\label{inequ-nas-mos-2}
\end{eqnarray}
for any $t'\in(t-r^2,t].$

Now, by integrating by parts once and using the pointwise Young inequality :
\begin{eqnarray*}
-\int_M\psi^2u^pg^{-1}\ast\nabla^{g_0,2}u^pd\mu_{g_0}&=&\int_M\nabla^{g_0}_i(g^{ij}\psi^2u^p)\nabla^{g_0}_ju^pd\mu_{g_0}\\
&=&\int_Mg^{-1}(\nabla^{g_0}(\psi u^p),\nabla^{g_0}(\psi u^p))-g^{-1}(\nabla^{g_0}\psi,\nabla^{g_0}(\psi u^p))u^pd\mu_{g_0}\\
&&+\int_Mg^{-1}(u^p\nabla^{g_0}\psi,\psi\nabla^{g_0}u^p)d\mu_{g_0}ds\\
&&+\int_M\div_{g_0}(g^{-1})(\psi^2u^p\nabla^{g_0}u^p)d\mu_{g_0}ds\\
&\geq&\frac{1}{2}\int_M|\nabla^{g_0}(\psi u^p)|_{g_0}^2d\mu_{g_0}\\
&&-c\int_M|\nabla^{g_0}\psi|^2_{g_0}u^{2p}+pu^{2p-1}\psi|\nabla^{g_0}h|_{g_0}\psi|\nabla^{g_0}u|_{g_0}d\mu_{g_0}\\
&\geq&\frac{1}{2}\int_M|\nabla^{g_0}(\psi u^p)|_{g_0}^2d\mu_{g_0}-c\int_M|\nabla^{g_0}\psi|^2_{g_0}u^{2p}d\mu_{g_0}\\
&&-\frac{p}{2}\int_M\left(\psi^2u^{2p-1}|\nabla^{g_0}h|_{g_0}^2+c u^{2p-1}\psi^2|\nabla^{g_0}u|^2_{g_0}\right)d\mu_{g_0},
\end{eqnarray*}
for some universal positive constant $c$, and where we used the smallness of $\|h(t)\|_{L^{\infty}(M)}$ for all time $t$. Going back to (\ref{inequ-nas-mos-1}) and (\ref{inequ-nas-mos-2}), one gets by absorbing terms appropriately :
\begin{eqnarray*}
\frac{1}{2}\int_{t-r^2}^{t'}\int_M|\nabla^{g_0}(\psi u^p)|_{g_0}^2d\mu_{g_0}ds&\leq& \int_{t-r^2}^{t'}\int_M-\psi^2u^p\partial_su^p+p\RP(g_0)\psi^2u^{2p}d\mu_{g_0}ds \\
&&+c\int_{t-r^2}^{t'}\int_M|\nabla^{g_0}\psi|^2_{g_0}u^{2p}d\mu_{g_0}ds.
\end{eqnarray*}
Finally, by integrating by parts with respect to time :
\begin{eqnarray*}
&&\int_M(u^{2p}\psi^2)(t')d\mu_{g_0}+\int_{t-r^2}^{t'}\int_M|\nabla^{g_0}(\psi u^p)|_{g_0}^2d\mu_{g_0}ds\\&&\qquad\leq 2\int_{t-r^2}^{t'}\int_Mp\RP(g_0)\psi^2u^{2p}d\mu_{g_0}ds 
+c\int_{t-r^2}^{t'}\int_M\left((\partial_s\psi^2)+|\nabla^{g_0}\psi|^2_{g_0}\right)u^{2p}d\mu_{g_0}ds.
\end{eqnarray*}

We need to control the integral involving the potential $\RP(g_0)$. By assumption, on the quadratic curvature decay at infinity:
\begin{eqnarray}\label{C^0-control-potential}
\|\RP(g_0)\|_{L^{\infty}(M\setminus B_{g_0}(x_0,r))}\leq\frac{C}{1+r^2},\quad r>0.
\end{eqnarray}
Let $\tau, \sigma\in (0,+\infty)$ such that $\tau+\sigma\leq r$. Define momentarily, 
\begin{eqnarray*}
P(x_0,t,r,s):=M\setminus B_{g_0}(x_0,r-s)\times(t-s^2,t], \quad 0<s^2\leq r^2<t.
\end{eqnarray*}
Notice that $s_1<s_2$ implies $P(x_0,t,r,s_1)\subset P(x_0,t,r,s_2).$

Now, choose two smooth functions $\phi:\mathbb{R}_+\rightarrow[0,1]$ and $\eta:\mathbb{R}_+\rightarrow[0,1]$ such that
\begin{eqnarray*}
&& \supp(\phi)\subset [r-(\tau+\sigma),+\infty),\quad\phi\equiv 1\quad\mbox{in $[r-\tau,+\infty)$},\\
&&\quad \phi\equiv 0\quad\mbox{in $[0,r-(\tau+\sigma)]$},\quad 0\leq \phi'\leq c/\sigma,\\
&& \supp(\eta)\subset [t-(\tau+\sigma)^2,+\infty),\quad\eta\equiv 1\quad\mbox{in $[t-\tau^2,+\infty)$},\\
&& \eta\equiv 0\quad\mbox{in $(t-r^2,t-(\tau+\sigma)^2]$},\quad 0\leq \eta'\leq c/\sigma^2.
\end{eqnarray*}
Define $\psi(y,s):=\phi(d_{g_0}(x_0,y))\eta(s)$, for $(y,s)\in M\times(0,+\infty)$. Then,
\begin{eqnarray*}
&&\arrowvert\nabla^{g_0}\psi\arrowvert_{g_0}\leq\frac{c}{\sigma},\quad \arrowvert\partial_s\psi\arrowvert\leq \frac{c}{\sigma^2},
\end{eqnarray*}
for some uniform positive constant $c$.

In particular, thanks to claim (\ref{C^0-control-potential})
 applied to this cut-off function $\psi$ previously defined, one has :
\begin{eqnarray*}
&&\int_M(u^{2p}\psi^2)(t')d\mu_{g_0}+\int_{t-r^2}^{t'}\int_M|\nabla^{g_0}(\psi u^p)|_{g_0}^2d\mu_{g_0}ds\leq\\
&&c\left(\frac{p}{[1+r-(\tau+\sigma)]^2}+\frac{1}{\sigma^2}\right)\left(\int_{P(x_0,t,r,\tau+\sigma)}u^{2p}d\mu_{g_0}\right),\quad t'\in(t-r^2,t],
\end{eqnarray*}
which implies in particular that,
\begin{eqnarray}\label{sup-integral}
\sup_{t'\in(t-r^2,t]}\int_M(u^{2p}\psi^2)(t')d\mu_{g_0}\leq c\left(\frac{p}{[1+r-(\tau+\sigma)]^2}+\frac{1}{\sigma^2}\right)\left(\int_{P(x_0,t,r,\tau+\sigma)}u^{2p}d\mu_{g_0}\right).
\end{eqnarray}
Now, by Hölder inequality, for $s\geq 0$,
\begin{eqnarray*}
\int_M (\psi u^p)^{2+\frac{4}{n}}(s)d\mu_{g_0}\leq\left(\int_M (\psi u^p)^{\frac{2n}{n-2}}(s)d\mu_{g_0}\right)^{\frac{n-2}{n}}\left(\int_M (\psi u^p)^2(s)d\mu_{g_0}\right)^{\frac{2}{n}}.
\end{eqnarray*}
Therefore, to sum it up, if $\alpha_n:=1+2/n$, by using (\ref{sup-integral}) in the third line,
\begin{eqnarray*}
\int_{P(x_0,t,r,\tau)}\left(u^{2p}\right)^{\alpha_n}d\mu_{g_0}ds&\leq&\int_{P(x_0,t,r,0)}\left(\psi u^p\right)^{2\alpha_n}d\mu_{g_0}ds\\
&\leq&\int_{t-r^2}^t\left(\int_M (\psi u^p)^{\frac{2n}{n-2}}d\mu_{g_0}\right)^{\frac{n-2}{n}}\left(\int_M (\psi u^p)^2d\mu_{g_0}\right)^{\frac{2}{n}}ds\\
&\leq&c(n,g_0)\sup_{s\in(t-r^2,t]}\left(\int_M (\psi u^p)^2d\mu_{g_0}\right)^{\frac{2}{n}}\int_{M\times(t-r^2,t]}\arrowvert\nabla^{g_0}(\psi u^p)\arrowvert_{g_0}^2d\mu_{g_0}\\
&\leq&c(n,g_0)\left(\frac{p}{[1+r-(\tau+\sigma)]^2}+\frac{1}{\sigma^2}\right)^{\alpha_n}\left(\int_{P(x_0,t,r,\tau+\sigma)}u^{2p}d\mu_{g_0}\right)^{\alpha_n}.
\end{eqnarray*}
Define the following sequences : 
\begin{eqnarray*}
p_i:=\alpha_n^i,\quad \sigma_i:=2^{-1-i}(r/4),\quad \tau_{-1}:=3r/4,\quad\tau_i:=3r/4-\sum_{j=0}^i\sigma_j,\quad i\geq 0.
\end{eqnarray*}
Then, $\lim_{i\rightarrow +\infty}\tau_i= r/2$ and, for any $i\geq 0$,
\begin{eqnarray*}
\| u^2\|_{L^{p_{i+1}}(P(x_0,t,r,\tau_i))}\leq\left(c(n,g_0)\left(\frac{p_i}{1+[r-\tau_{i-1}]^2}+\frac{1}{\sigma_i^2}\right)\right)^{\frac{1}{p_i}}\| u^2\|_{L^{p_i}(P(x_0,t,r,\tau_{i-1}))},
\end{eqnarray*}
i.e.
\begin{eqnarray*}
\| u\|^2_{L^{\infty}(P(x_0,t,r/2))}\leq\Pi_{i=0}^{\infty}\left(c(n)\left(\frac{p_i}{1+[r-\tau_{i-1}]^2}+\frac{1}{\sigma_i^2}\right)\right)^{\frac{1}{p_i}}\| u\|^2_{L^{2}(P(x_0,t,r,3r/4))},
\end{eqnarray*}
since $P(x_0,t,r,r/2)=P(x_0,t,r/2)$.
 It remains to estimate the previous infinite product:

\begin{eqnarray*}
\Pi_{i=0}^{\infty}\left(\frac{p_i}{1+[r-\tau_{i-1}]^2}+\frac{1}{\sigma_i^2}\right)^{\frac{1}{p_i}}
&\leq&c(n)\frac{1}{r^{2+n}},
\end{eqnarray*}
i.e.
\begin{eqnarray}\label{L^2-bound-mean-value}
\sup_{P(x_0,t,r/2)}u^2&\leq&c(n,g_0) \frac{1}{r^{2+n}}
\int_{P(x_0,t,r,3r/4)}u^2d\mu_{g_0}ds.
\end{eqnarray}

To get a bound depending on the $L^1$ norm of $u$, one can proceed as in \cite{Li-Sch-Mea-Val} (the so called Li-Schoen's trick) by iterating (\ref{L^2-bound-mean-value}) appropriately. 

\end{proof}
\bibliographystyle{alpha.bst}
\bibliography{bib-ale-stability}

\end{document}